%% file: main.tex
\documentclass[11pt,a4paper]{article}
\usepackage{graphicx} 
\usepackage{amsthm,amsmath}
\usepackage{amssymb}
\usepackage{multirow}
\usepackage[table]{xcolor}
\usepackage{amscd}
\usepackage{tikz}
\usepackage{tikz-cd}
\usepackage{placeins}

\usepackage[utf8]{inputenc}
\usepackage[T1]{fontenc}
\usepackage{lmodern}
\usepackage{bm}
\usepackage{array}
\usepackage{booktabs}
\usepackage{makecell}
\usepackage{rotating}
\usepackage{float}
\usepackage[hidelinks]{hyperref}

\renewcommand\arraystretch{1.15}
\newcolumntype{C}[1]{>{\centering\arraybackslash}p{#1}}

\DeclareRobustCommand{\SkipTocEntry}[5]{}

\title{Geometry and topology of the tempered Iwahori-spherical representations of a split semisimple $p$-adic group}
\author{Dominic Majda, Graham A. Niblo, Roger Plymen and Nick Wright}
\date{June 2026}

\newtheorem{theorem}{Theorem}

\newtheorem{lemma}[theorem]{Lemma}
\newtheorem{corollary}[theorem]{Corollary}

\newbox{\omitbox}

\def\SSpin{\mathrm{SSpin}}
\def\HSpin{\mathrm{HSpin}}
\def\Spin{\mathrm{Spin}}
\def\SO{\mathrm{SO}}
\def\PSO{\mathrm{PSO}}
\def\Sp{\mathrm{Sp}}
\def\PSp{\mathrm{PSp}}
\def\D{\mathcal{D}}
\def\SL{\mathrm{SL}}

\def\suspcone{Y_{t\geq0}}
\def\conesusp{Y_{s\geq0}}
\def\conecone{Y_{s,t\geq0}}
\def\suspsusp{Y}

\def\SSpin{\mathrm{SSpin}}

\def\t{\mathfrak t}

\def\fA{\mathfrak{A}}

\def\R{\mathbb{R}}
\def\Z{\mathbb{Z}}
\def\Q{\mathbb{Q}}
\def\Atilde{(\widetilde{A})}

\def\TT{\mathbb{T}}

\def\mbar{\overline{m}}

\def\cZ{\mathcal{Z}}
\def\fA{\mathfrak{A}}

\def\RP{\R\mathrm{P}}

\let\mid=:

\newcommand{\C}{\mathbb{C}}
\def\cZ{{\mathcal Z}}
\def\q {{/\!/}}

\def\mbar{\overline{m}}
\def\coveringtorus{\tilde{T}}
\def\ev{\mathrm{ev}}
\def\odd{\mathrm{odd}}

\def\ZDSpin{\cZ_{\Spin(2n)}}
\def\ZDSO{\cZ_{\SO(2n)}}

\def\ZCSp{\cZ_{\Sp(n)}}

\def\TSpinn{\coveringtorus_{n/2}}

\def\wplus{w^+}
\def\wminus{w^-}
\def\wplusminus{w^\pm}
\def\wminusplus{w^\mp}
\def\neg{\mathrm{neg}}

\def\bT{\mathrm{T}}

\def\G{\mathcal{G}}
\def\bG{\mathbf{G}}

\def\bundle#1#2#3{{\gamma\!\strut_{#1}(#2,#3)}}
\def\unitbundle#1#2#3{{\gamma^1\!\!\!\strut_{#1}(#2,#3)}}

\def\rank{\mathrm{rank}}

\begin{document}

\maketitle

\begin{abstract}
Let $\G$ be a connected split $p$-adic group of type $B_n$, $C_n$ or $D_n$. Amongst the tempered representations of $\G$ a key role is played by the Iwahori-spherical block. We provide a compact Hausdorff model for this space which allows us to compute the $K$-theory ranks for the corresponding $C^*$-algebra. 
The model, an extended quotient, is stratified by sectors.
Underlying the approach of this paper is the interplay between the geometric extended quotient and the spectral extended quotient in the context of the ABPS conjecture.

 We give geometric descriptions of every sector thus equipping the spectrum with a cellular structure. We classify the geometric structures arising in our model, and specifically discover real projective spaces along with cones and suspensions of these. These examples require a minor modification to our homotopy sector conjecture: we show that Langlands dual sectors are rationally (indeed dyadically) homotopy equivalent in all cases ($A_n, B_n, C_n, D_n, E_6, E_7$ and $E_8$).

\end{abstract}

\tableofcontents

\section{Introduction}

The ABPS conjecture \cite{ABPS-smooth-dual} proposes a correspondence between the representation theory of reductive $p$-adic groups and the extended quotients of certain associated tori.  Extending the work of Solleveld \cite{Sol}, we construct and study these extended quotients for split semisimple $p$-adic groups of types $B_n$, $C_n$ and $D_n$, thus providing, in the spirit of noncommutative geometry, a compact Hausdorff space which models the spectrum of an associated noncommutative $C^*$-algebra: the reduced Iwahori-spherical $C^*$-algebra of the group.

This $C^*$-algebra arises as follows:  let $\G$ be a connected split semisimple $p$-adic group.
The reduced Iwahori-spherical $C^*$-algebra $\fA(\G)$ is the unital sub-$C^*$-algebra of $C^*_r(\G)$ whose spectrum is the space of tempered Iwahori-spherical representations. The spectrum is equipped with a natural topology which makes this into a compact, typically non-Hausdorff, space.

Attached to $\G$ we have, via Langlands duality, a compact torus acted upon by the Weyl group $W$ of  $\G$.   According to the ABPS conjecture, the resulting extended quotient parametrises the spectrum of the reduced Iwahori-spherical $C^*$-algebra $\fA(\G)$. 

On the other hand the extended quotient breaks up into sectors indexed by the conjugacy classes of $W$.  The sectors equip the spectrum of $\fA(\G)$ with a cellular structure. We give detailed geometric descriptions of  each of these sectors whenever the $p$-adic group $\G$ is of type $B_n, C_n, D_n$.     Of special interest are the groups $\PSO(18, \Q_p)$ and $\Spin(18,\Q_p)$ and indeed all the semi-spin groups $\HSpin^\pm(4k,\Q_p)$ (sometimes referred to as half-spin groups). By convention we will use the notation $\HSpin$ in the $p$-adic case, and the notation $\SSpin$ for the real and complex semi-spin groups.

\bigskip

Specifically, we let $\bG = \G^\vee$ denote the Langlands dual of $\G$. This is a complex reductive group, and we let $G$ denote a maximal compact subgroup of $\bG$, with maximal torus $\bT_G$.
We consider the geometric extended quotient $\bT_G\q W$ and the spectral extended quotient $(\bT_G\q W)_2$. By \cite[Theorem 15.1]{ABPS3}, the points of $(\bT_G\q W)_2$ parametrise the spectrum of the tempered Iwahori-spherical algebra in a natural way.

Choosing bijections between conjugacy classes and irreducible representations for the isotropy groups of the action yields an identification of the two extended quotients $\bT_G\q W$ and $(\bT_G\q W)_2$. By \cite[Theorem 5.2]{ABPS} in the case that $\G$ has connected centre and the residual characteristic satisfies the Roche conditions \cite{Roche} such a choice can be made to yield a \emph{continuous} bijection:
\[
\mu:\bT_G\q W
\to \widehat{\fA(\G)}.
\]
This allows us to view the space of representations as equipped with a cellular structure in this case. The ABPS conjecture, \cite[Section 15]{ABPS-smooth-dual}, claims that such a continuous parametrisation can always be constructed.

In this paper we compute the extended quotients $\bT_G\q W$ which give parametrisations of the tempered part of the Iwahori-spherical block for the $p$-adic groups
\begin{itemize}
    \item $\Spin(2n+1,F)$, $\SO(2n+1,F)$, (type $B_n$)
    \item $\Sp(n,F)$, $\PSp(n,F)$, (type $C_n$)
    \item $\Spin(2n,F)$, $\SO(2n,F)$, $\PSO(2n,F)$ and $\HSpin^\pm(2n,F)$ (type $D_n$).
\end{itemize}
In the cases of $\SO(2n+1,F)$, $\Sp(n,F)$ and $\SO(2n,F)$ this calculation was originally due to Solleveld \cite{Sol}.

Note that the groups $\SO(2n+1,F), \PSp(n,F)$ and $\PSO(2n,F)$ have trivial (and hence connected) centre and therefore the parametrisation is proven to be continuous in these cases.

The $A_n$ and $E_6$ cases were previously considered in \cite{NPWAn} and \cite{NPWE6} respectively while the remaining cases ($F_4, G_2, E_7, E_8$) were computed in the first author's PhD thesis \cite{Majda}.

In the course of this work we exhibit the existence of real projective spaces in these models of the spectrum. 
Specifically, for $\G=\Spin(18,F)$, where $F$ is a finite extension of $\Q_p$ with $p>2$, the sector of signed\footnote{See Section \ref{signed permutations} for a  discussion of signed permutations.} cycle type $(5,3,1)$ exhibits a subset of the tempered Iwahori spectrum which maps continuously and bijectively to $\R \mathrm{P}^3$. We compare this with the more recent work of Aubert and Plymen \cite{AP} where they exhibit the existence of a Klein bottle in the tempered dual of $\SL(8,F)$. We note that in their example the Klein bottle is not in the Iwahori-spherical block; indeed (see \cite{NPWAn}) the Iwahori sectors in that group are unions of tori up to homotopy.

In exploring the geometry of the Iwahori sectors we find various other interesting phenomena.  For example in the cases of $\Spin(12,F)$ or $\Spin(13,F)$ with duals $\PSO(12,\C)$ and $\PSp(6,\C)$ respectively, the sector indexed by the conjugacy class of signed cycle type $(3,2,1)$ is in both cases the cone on $\RP^2$, and hence has a distinguished cone point with non-manifold geometry in this model of the Iwahori-spherical block. In the cases of $\Spin(32,F)$ and $\HSpin^\pm(32,F)$, the sector of signed cycle type $(7,5,3,1)$ has the geometry of $\Sigma\RP^3$ and thus has two distinguished cone points. It may be of interest to consider how these distinguished points map into the space of representations, and what this implies in the context of the ABPS conjecture.
\bigskip

Turning now to $K$-theory, applying Solleveld \cite[Eq. 4.9]{ABPS2}, the $K$-theory of the reduced Iwahori-spherical $C^*$-algebra $\fA(\G)$ can always be computed by
\[
K_j(\fA(\G)) \otimes_\Z \Q \cong K^j_W(\bT_G) \otimes_\Z \Q
\]
which after tensoring with $\C$ is computed (using the equivariant Chern character \cite{BaumConneschern}) by the cohomology $\bigoplus_k H^{j+2k}(\bT_G\q W,\C)$. This is a $K$-theoretic analogue of the parametrisation of $\widehat{\fA(G)}$ above.

In \cite{NPWKdual} we established a $W$-equivariant $KK$-duality between $C(\bT_G)$ and $C(\bT_{G^\vee})$ where $G^\vee$ is the real Langlands dual of $G$. In \cite{NPWAn} we showed that this duality holds at the level of sectors, again after tensoring with $\C$. We observed in the examples of \cite{NPWAn} and \cite{NPWE6} that the sector-wise duality is mediated by homotopy equivalences at the level of sectors, and in \cite{NPWE6} we conjectured that this holds in general. Further evidence for the conjecture was furnished in the thesis of the first author \cite{Majda} who verified this in the case of $E_7$. Since $E_8,F_4,G_2$ are self-dual this leaves the cases of $B_n,C_n$ and $D_n$ which are considered here.

We show that in almost every case the homotopy sector conjecture holds, however there are exceptions enumerated below, arising from the existence of projective spaces in the $\PSO(2n,\R)$ sectors as mentioned above, and spheres in the sectors of the dual $\Spin(2n,\R)$. However even in these cases there is a dyadic rational homotopy equivalence. Indeed we obtain:
\begin{theorem}\label{theorem}
    Let $\bT_G$ and $\bT_{G^\vee}$ be maximal tori for Langlands dual compact connected semisimple Lie groups ${G}, {G}^\vee$ and $w\in W({G})$. Then the sectors $\bT_G^w/Z(w)$ and $(\bT_{G^\vee})^w/Z(w)$ are homotopy equivalent except in the following case:
    \begin{enumerate}
        \item The Lie groups are $\Spin(2n)$ and $\PSO(2n)$.
        \item The element $w$ has signed cycle type with no negative cycles, with at least $3$ odd positive cycle lengths, and with all odd cycle lengths having multiplicity $1$.
    \end{enumerate}

    The corresponding sectors are then homotopy equivalent to $S^{\delta_\odd}$  for the $\Spin(2n)$ group, where ${\delta_\odd}$ is the number of odd positive cycles,  and to $\RP^{\delta_\odd}$ for $\PSO(4k+2)$ and $\Sigma \RP^{\delta_\odd-1}$ for  $\PSO(4k)$.
\end{theorem}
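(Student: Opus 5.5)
The plan is a case-by-case analysis over the Cartan types, reducing each sector to an explicit quotient of a product of tori. The cases $A_n$, $E_6$ and $E_7$ are already settled in \cite{NPWAn}, \cite{NPWE6} and \cite{Majda}, and $G_2$, $F_4$, $E_8$ are self-dual. So only the dual pairs of types $B_n/C_n$ and $D_n$ remain. For each group in the list ($\Spin(2n+1)$ versus $\Sp(n)$, $\SO(2n+1)$ versus $\PSp(n)$, and $\Spin(2n)$, $\SO(2n)$, $\PSO(2n)$, $\SSpin^\pm(2n)$ paired with their duals), write $\bT_G = \t/\Lambda$, where $\Lambda$ is a lattice between the coroot and coweight lattices.

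Take $w$ a signed permutation with positive cycles of lengths $\lambda_i$ and negative cycles of lengths $\mu_j$. For $\Lambda=\Z^n$ the fixed set $(\R^n/\Z^n)^w$ is a product over cycles. A positive cycle contributes a circle (the diagonal). A negative cycle contributes the finite set $\{0,1/2\}$. The centraliser $Z(w)$ acts through three mechanisms: permuting equal cycles, inverting positive cycles (by sign changes), and acting by cyclic rotations, which act trivially on the diagonal.

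For the other lattices, pass from $\Z^n$ to $\Lambda$ using the index-$2$ (or index-$4$ in type $D$) extension by $(\tfrac12,\dots,\tfrac12)$, or the corresponding sublattice. The fixed set $(\t/\Lambda)^w$ becomes a union of translates of a subtorus. The subtorus is either a quotient or a cover of the $\Z^n$ fixed torus by a group of order at most $2$.

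In each case I will identify $\bT^w/Z(w)$ as a product of symmetric-product factors. Each factor has the form $(S^1)^m/(\Z/2\wr S_m)$ (a contractible simplex or interval), or $(S^1)^m/S_m$ when inversions are unavailable, which is homotopy equivalent to a circle. There may also be a global $\Z/2$ identification coming from the half-lattice vector or from the parity (determinant) constraint in type $D$.

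Dual pairs are then compared directly. Generically the extra $\Z/2$ either acts freely on a contractible factor or is absorbed by an available inversion, so both sides collapse to the same homotopy type, typically a torus or a point. I will construct explicit equivariant deformation retractions onto the interval endpoints $\{0,\tfrac12\}$ to match the components.

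The exceptional case arises in type $D$, where $Z(w)$ inside $W(D_n)$ only contains even numbers of sign changes. If there are no negative cycles, the only available inversions are products of even numbers of cycle inversions. If moreover the odd cycles are distinct, they cannot be swapped. On the odd cycles we then get the $\delta_\odd$-torus modulo the even-sign-change subgroup of $(\Z/2)^{\delta_\odd}$, and the even cycles retract away.

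Each circle $S^1/\{\pm1\}$ is an interval, so the quotient by the full group $(\Z/2)^{\delta_\odd}$ is a cube. Dividing instead by the index-$2$ subgroup yields the double of the cube along its boundary, which is $S^{\delta_\odd}$; this is the $\Spin(2n)$ side. On the $\PSO(2n)$ side the lattice also contains $(\tfrac12,\dots,\tfrac12)$. Translation by it, combined with the diagonal structure, gives an extra free or semi-free $\Z/2$ acting antipodally on this sphere. When $n$ is odd this produces $\RP^{\delta_\odd}$. When $n \equiv 0 \bmod 4$ parity is preserved, and a fixed pair of poles survives, giving $\Sigma\RP^{\delta_\odd-1}$.

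Hypothesis $\delta_\odd\ge 3$ is exactly what makes the homotopy types differ. For $\delta_\odd=1$ both sides are intervals, and for $\delta_\odd=2$ we have $\RP^2$ versus $S^2$, so I must check whether this differs too or is excluded by the parity constraint that $\sum\lambda_i$ is even. The detection itself is done by computing $H^*(\,\cdot\,;\Z/2)$ or $\pi_1$.

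The main obstacle is this last step: correctly tracking the action of the half-lattice translation on the doubled cube, and the $\SSpin$ cases where only one half-spin vector lies in the lattice. I would first verify the picture on the small examples quoted in the introduction, namely $(3,2,1)$ for $\PSO(12)$ giving the cone on $\RP^2$ and $(5,3,1)$ for $\Spin(18)$, before writing out the general identification.
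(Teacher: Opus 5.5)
Your overall route is the paper's: reduce to types $B$, $C$, $D$, compute every sector explicitly, and obtain the $\PSO(2n)$ sector as the quotient of a $\delta_{\odd}$-sphere by the central $\Z/2$, acting antipodally or by a suspended antipodal map. The gap is that the decisive step is done for the wrong group. Your picture is this: the odd part of the fixed set is $(S^1)^{\delta_{\odd}}$, each circle folds to an interval, the full sign group gives the cube, and the even sign changes give the cube doubled along its boundary. That is the computation for the lattice $\Z^n$, i.e.\ for $\SO(2n)$, and adjoining $(\tfrac12,\dots,\tfrac12)$ to $\Z^n$ is then the passage to $\PSO(2n)$. But $\SO(2n)$ is self-dual. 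The dual of $\PSO(2n)$ is $\Spin(2n)$, whose nodal lattice is the even-parity lattice. Each odd cycle contributes an odd number of equal coordinates, so the odd part of $\coveringtorus^w$ is $\R^{\delta_{\odd}}/\Z^{\delta_{\odd}}_{\ev}$, on which the sign changes do not act circle by circle. The effective group $H=\Z^{\delta_{\odd}}_{\ev}\rtimes\{\text{even sign changes}\}\rtimes\prod S_{d_i}$ has index $4$, not $2$, in $G=\D_\infty^{\delta_{\odd}}\rtimes\prod S_{d_i}$. You therefore need a separate argument that $H\backslash\R^{\delta_{\odd}}$ is still $S^{\delta_{\odd}}$, compatibly with the $\prod S_{d_i}$-action on the $\SO$ sphere. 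The paper uses Lemma \ref{lemma0} to rewrite it as $G\backslash(G/H\times\R^{\delta_{\odd}})$: four cubes glued along the faces through $0$ and through $(\tfrac12,\dots,\tfrac12)$, which is $\Sigma\Sigma(W)$. As written, you have compared $\SO(2n)$ with $\PSO(2n)$, which is not the statement.

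The boundary of the exceptional set is also wrong. With no negative cycles, $n=\sum_i n_id_i\equiv\delta_{\odd}\pmod 2$, and this is the relevant parity. It is not a constraint that $\sum\lambda_i$ be even, which fails for $(5,3,1)$ in $D_9$.
For $\delta_{\odd}=1$ both sectors are circles, $S^1$ and $\RP^1$, not intervals; your own construction divides by the trivial subgroup of $\Z/2$. For $\delta_{\odd}=2$, $n$ is even, so the $\PSO$ side is $\Sigma\RP^1\cong S^2$, not $\RP^2$. The suspension case is $n$ even ($\PSO(4k)$ means $2n=4k$), not $n\equiv 0\bmod 4$, so your dichotomy leaves out $n\equiv 2\bmod 4$.

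Finally, the claim that all other sectors agree is only asserted, and your mechanism cannot occur as stated. By the Lefschetz fixed point theorem, an involution of a compact contractible polyhedron has a fixed point, so it cannot act freely on a contractible factor. What happens instead is that the extra $\Z/2$ either pairs off contractible components or acts on a polysimplex with cone quotient $V(\delta-q,q)$. So you must:
\begin{itemize}
\item match component counts, e.g.\ $\lceil\frac12C\rceil+1$ for $\Spin(2n+1)$ against $\lfloor\frac12C\rfloor+1+(C\bmod 2)$ for $\PSp(n)$, and $2+\lceil\frac12C\rceil$ on both sides in type IB;
\item show that a repeated odd cycle makes $S^{\delta_{\odd}}/\prod S_{d_i}$ a ball (Lemmas \ref{equator lemma} and \ref{join lemma}) whose $\PSO$ quotient is again a cone;
\item for $n\equiv 2\bmod 4$, where $\SSpin^+(2n)$ is dual to $\SSpin^-(2n)$, check that the split classes $w^+$ and $w^-$, whose roles swap, still give equal numbers of contractible components.
\end{itemize}
You flag the semi-spin case as the main obstacle but give no argument for it.
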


\begin{corollary}\label{corollary} Under the hypotheses of Theorem \ref{theorem}, the sectors $\bT_G^w/Z(w)$ and $(\bT_{G^\vee})^w/Z(w)$ are dyadically homotopy equivalent, hence rationally homotopy equivalent, in the following sense. There is a map between the sectors inducing isomorphisms:
\begin{align*}
\pi_0(\bT_G^w/Z(w))&\cong \pi_0((\bT_{G^\vee})^w/Z(w))\\
\pi_i(\bT_G^w/Z(w)) \otimes_\Z \Z[1/2] &\cong \pi_i((\bT_{G^\vee})^w/Z(w))\otimes_\Z \Z[1/2]&\text{for all }i\geq 1
\end{align*}
and the fundamental groups are abelian.

Moreover the sectors are either homotopy equivalent or they are connected spaces whose homotopy groups differ only by $2$-torsion.  In the case that $n$ is odd we can further strengthen this to say that the map on higher homotopy groups is integrally an isomorphism. 
\end{corollary}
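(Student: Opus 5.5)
The corollary follows from Theorem \ref{theorem}, so the plan is to split into the generic case and the exceptional case. In the generic case the theorem gives a homotopy equivalence, which induces isomorphisms on $\pi_0$ and on all $\pi_i$ integrally, hence also after tensoring with $\Z[1/2]$. Two points remain there: that fundamental groups are abelian, and that the ``either homotopy equivalent or differing only by $2$-torsion'' dichotomy holds. For abelianness I will use the sector descriptions obtained earlier in the paper. Each component of $\bT_G^w$ is a torus, and $Z(w)$ acts through a finite group of affine maps, so $\pi_1$ of a component of the quotient is a quotient of an orbifold-type extension of $\Z^r$. I would rather read it off directly: the paper's case-by-case identifications show each sector is homotopy equivalent to a product of tori with cones, suspensions and projective spaces. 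The only possibly nonabelian contributions are $\pi_1(\RP^m)=\Z/2$ and $\pi_1(S^1)=\Z$, both abelian, and products of abelian groups are abelian.

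In the exceptional case, condition (2) forces $\delta_\odd\ge 3$, and the sectors are $S^{\delta_\odd}$ versus $\RP^{\delta_\odd}$ (for $\PSO(4k+2)$) or versus $\Sigma\RP^{\delta_\odd-1}$ (for $\PSO(4k)$). All of these are connected, so $\pi_0$ agrees. I will construct explicit maps as follows.
\begin{enumerate}
\item For $\PSO(4k+2)$, take the double cover $q\colon S^{\delta_\odd}\to\RP^{\delta_\odd}$. It induces isomorphisms on $\pi_i$ for all $i\ge2$. On $\pi_1$ it gives $0\to\Z/2$, which is an isomorphism after inverting $2$. Since both fundamental groups are abelian, the claim holds, and the higher homotopy groups agree integrally.
\item For $\PSO(4k)$, take the suspension $\Sigma q\colon \Sigma S^{\delta_\odd-1}=S^{\delta_\odd}\to\Sigma\RP^{\delta_\odd-1}$. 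The target is simply connected, with $\tilde H_*(\Sigma\RP^{m};\Z[1/2])=\tilde H_{*-1}(\RP^m;\Z[1/2])$. For $m=\delta_\odd-1$ this is $\Z[1/2]$ in degree $\delta_\odd$ exactly when $m$ is odd, and $0$ otherwise. The map $q$ has degree $2$ on top homology when $m$ is odd, which is invertible in $\Z[1/2]$.
\end{enumerate}

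In case 2 I then need the parity. For $\PSO(4k)$, $2n=4k$ and $w$ has only positive cycles, so the odd cycle lengths sum to an even number and $\delta_\odd$ is even. Thus $m$ is odd, and $\Sigma q$ is a $\Z[1/2]$-homology isomorphism between simply connected spaces. The mod-$\mathcal C$ Whitehead theorem, with $\mathcal C$ the Serre class of torsion abelian groups of exponent a power of $2$, then gives isomorphisms on $\pi_i\otimes\Z[1/2]$. Analogously, for $4k+2$ the sum is odd, so $\delta_\odd$ is odd, consistent with the $\RP^{\delta_\odd}$ description. Integrally $\pi_i(\Sigma\RP^{m})$ differs from $\pi_i(S^{m+1})$ by $2$-primary torsion, which gives the ``differ only by $2$-torsion'' statement. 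Note also that ``$n$ odd'' corresponds to $\PSO(4k+2)$, where the covering map gives integral isomorphisms in degrees $\ge2$. This is the strengthening claimed.

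The main obstacle I expect is the general abelianness of $\pi_1$ across all non-exceptional sectors. This relies on the full case-by-case geometric classification of sectors. If that classification is only up to homotopy with factors of the type listed, the argument is immediate. Otherwise I would argue directly that $Z(w)$ acts on each torus component with the translation subgroup acting freely and the residual finite group acting by $\pm$ coordinate reflections, giving abelian fundamental groups. A secondary point is making precise the phrase ``there is a map between the sectors'' in the generic case, which is immediate from the theorem.
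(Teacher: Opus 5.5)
Your proposal is correct and follows essentially the same route as the paper. Theorem \ref{theorem} reduces everything to two exceptional pairs: $S^{\delta}\to\RP^{\delta}$ with $\delta$ odd, handled by the long exact sequence of the covering, and $S^{\delta}=\Sigma S^{\delta-1}\to\Sigma\RP^{\delta-1}$ with $\delta$ even, handled by the Whitehead--Serre theorem over $\Z[1/2]$; in the remaining cases abelianness of $\pi_1$ is read off from the sector classification, as the paper does implicitly. Your parity check on $\delta_\odd$ and your treatment of the $2$-torsion and $n$ odd refinements fill in details the paper leaves unstated, but drop the hedged fallback argument for abelian $\pi_1$ via translations plus $\pm$ reflections: it fails in general, since a glide reflection acting freely on a $2$-torus has the Klein bottle as quotient.
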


\begin{corollary}
    Let $\G_1,\G_2$ be  split semisimple $p$-adic groups whose root data are dual to one another. If the ABPS conjecture \cite[\S15]{ABPS-smooth-dual} holds for $\G_1, \G_2$ then  the sectors parametrising their tempered Iwahori-spherical blocks are rationally homotopy equivalent in the above sense. 
\end{corollary}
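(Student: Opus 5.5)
The statement to prove is the last corollary: for split groups $\G_1,\G_2$ with dual root data, if the ABPS conjecture holds for both, then the sectors parametrising their tempered Iwahori-spherical blocks are rationally homotopy equivalent. My plan is to reduce it formally to Corollary \ref{corollary}, with the ABPS conjecture supplying the identification of the spectrum with the extended quotient.

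First, I would fix the dual objects. The Langlands duals $\bG_1=\G_1^\vee$ and $\bG_2=\G_2^\vee$ are complex reductive groups. Their maximal compact subgroups $G_1,G_2$ have root data dual to one another, since dualising twice returns the original root datum. So $G_2\cong G_1^\vee$, and these are Langlands dual compact connected semisimple Lie groups. They share the Weyl group $W$, acting on $\bT_{G_1}$ and $\bT_{G_2}$.

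Second, I would use the ABPS conjecture \cite[\S15]{ABPS-smooth-dual} for each $\G_i$. It gives a continuous bijection $\mu_i:\bT_{G_i}\q W\to\widehat{\fA(\G_i)}$, built from the identification of the geometric extended quotient with the spectral one, whose points parametrise the tempered Iwahori-spherical spectrum by \cite[Theorem 15.1]{ABPS3}. The extended quotient decomposes as a disjoint union over conjugacy classes $[w]$ of $W$:
\[
\bT_{G_i}\q W=\bigsqcup_{[w]}\bT_{G_i}^w/Z(w).
\]
The images $\mu_i(\bT_{G_i}^w/Z(w))$ are the sectors of the parametrisation. Because both sides are indexed by the same set of conjugacy classes of the common group $W$, the sectors of $\G_1$ and $\G_2$ pair off.

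Third, for each $w$ I would apply Corollary \ref{corollary} to the pair $G_1,G_2$. It gives a map $\bT_{G_1}^w/Z(w)\to\bT_{G_2}^w/Z(w)$ that is a bijection on $\pi_0$ and an isomorphism on $\pi_i\otimes\Z[1/2]$, hence after tensoring with $\Q$. This is exactly the rational homotopy equivalence "in the above sense", and it is the conclusion.

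The one delicate point is what "the sectors parametrising" means. The map $\mu_i$ is only a continuous bijection onto a non-Hausdorff space, not a homeomorphism, so the homotopy type cannot be read off the spectrum itself. I would therefore state explicitly that the sectors are the model spaces $\bT_{G_i}^w/Z(w)$ that parametrise the spectrum via $\mu_i$. With that reading the argument is purely formal, and the real content lies in Theorem \ref{theorem} and Corollary \ref{corollary}, which may be assumed.
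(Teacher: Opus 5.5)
Your proposal is correct and follows essentially the same route as the paper, which gives no separate proof because the corollary is immediate. The ABPS conjecture supplies the parametrisation of each tempered Iwahori-spherical spectrum by $\bT_{G_i}\q W=\bigsqcup_{[w]}\bT_{G_i}^w/Z(w)$, the groups $G_1,G_2$ are Langlands dual with a common Weyl group, and Corollary \ref{corollary} (including its abelian-$\pi_1$ clause, with $\Z[1/2]$-isomorphisms passing to $\Q$) then gives the sector-by-sector rational homotopy equivalence; your reading of ``sectors'' as the model spaces $\bT_{G_i}^w/Z(w)$, rather than their images in $\widehat{\fA(\G_i)}$, is the intended one.
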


\bigskip

The counterexamples identified above lead us to the following observation concerning torsion in $K$-theory. For the $\PSO(2n)$ group, the $K$-theory and cohomology of sectors of the form $\RP^\delta$ or $\Sigma \RP^{\delta-1}$ (for $\delta \geq 3$) contain torsion.  The corresponding sectors for $\Spin(2n)$ have no torsion in their (K-)homology, however, by the results of \cite{NPWKdual} there is an isomorphism
\[
  K_*^W(\bT_{\Spin(2n)}) \cong K^*_W(\bT_{\PSO(2n)})
\]
  and dually
  \[
  K_*^W(\bT_{\PSO(2n)})\cong K^*_W(\bT_{\Spin(2n)}).
  \]
We are left with a dichotomy: either the torsion in the cohomology of sectors is invisible at the level of equivariant $K$-theory of the $\PSO(2n)$ torus, or the torsion does appear here and therefore it must also appear in the equivariant $K$-homology group of the Spin torus but is invisible in the homology of the corresponding sectors. There is a similar dichotomy for the $K$-homology in the $\PSO(2n)$ case and the $K$-theory of the $\Spin(2n)$ case.  This leaves open the question of how these dichotomies are resolved.

\bigskip

Our computations allow us, in the context of groups of type $B_n,C_n$ and $D_n$, to compute the $K$-theory of $\fA(\G)$.
Focusing on groups of type $D_n$, the simply connected case is $\Spin(2n)$ whose centre has order $4$.  This is alternately a cyclic group, when $n$ is odd, and a Klein-$4$ group when $n$ is even.  Hence in the former case there are exactly $3$ groups of type $D_n$, viz. $\Spin(2n)$, $\SO(2n)$, $\PSO(2n)$, while in the latter case there is an additional pair of groups with centre of order $2$, denoted depending on context by $\SSpin^{\pm}(2n)$ or $\HSpin^{\pm}(2n)$. By convention we will use $\HSpin$ in the $p$-adic case and $\SSpin$ for the real and complex cases: the existence of this split $p$-adic group is far from obvious, however this is established in the book by Milne in the section on semisimple algebraic groups over arbitrary fields \cite[\S23]{Mil}.

The Langlands dual of $\Spin(2n,F)$ is $\PSO(2n,\C)$ with maximal compact subgroup $\PSO(2n,\R)$ and conversely the Langlands dual of $\PSO(2n,F)$ is $\Spin(2n,\C)$ with maximal compact subgroup $\Spin(2n,\R)$. The dual of $\SO(2n,F)$ gives $\SO(2n,\C)$ while the dual of $\HSpin^{+}(2n,F)$ will give $\SSpin^{+}(2n,\C)$ when $n\equiv 0$ mod $4$ and $\SSpin^{-}(2n,\C)$ when $n\equiv 2$ mod $4$. The duality for $\HSpin^{-}(2n,F)$ is analogous.

As an example we consider the case of $D_4$. 
Our computations give the following $K$-theory groups. 
\bigskip

\begin{table}[h]
\[
\begin{array}{c|c|c|c}
  \G&{G}&K_0(\fA(\G))\otimes \C &K_1(\fA(\G))\otimes \C\\[.5ex]
\hline
\Spin(8,F)   &\PSO(8,\R)   & \C^{33\strut } &0\\[.5ex]
 \SO(8,F)   & \SO(8,\R)   & \C^{30}&0\\[.5ex]
\HSpin^\pm(8,F)   &\SSpin^\pm(8,\R)   & \C^{30}&0\\[.5ex]
\PSO(8,F)   &\Spin(8,\R)   & \C^{33}&0\\
\end{array}
\]
\bigskip

    \caption{The $K$-theory of the reduced Iwahori-spherical algebra $\fA(\G)$ for $p$-adic groups of type $D_4$.}
\end{table}

Of course in this (and only this) rank we have an isomorphism
\[
\SO(8)\cong\SSpin^+(8)\cong\SSpin^-(8),
\]
however the identification of the Lie groups is induced by the triality automorphism of the Weyl group $W(D_4)$.  This obscures the homeomorphisms between sectors, since the cycle types and conjugacy classes are permuted. This is discussed in detail in Section \ref{D4 Lie groups}.

\bigskip

In constructing the $B_n, C_n$ and $D_n$ sectors it is often useful to compare the centraliser actions on the maximal tori of the corresponding types.  There are close parallels between these three classes that can be formalised by the following observation.  There is a $W(B_n)=W(C_n)$-equivariant identification of the maximal torus of the simply connected $C_n$ form with that of the adjoint $B_n$ form.  This allows us to regard the $B_n$ tower and the $C_n$ tower as a single object. The classical part of the $D_n$ tower (excluding the semi-spin groups) can then be $W(D_n)$-equivariantly identified with this object, allowing us to transfer computations between the three families.

\begin{figure}[htbp]
\begin{center}
\begin{tikzpicture}[
  >=Stealth,
  node distance=1.2cm,
  tower/.style={align=center, font=\large},
  group/.style={font=\Large},
  arrowlabel/.style={font=\normalsize, fill=white, inner sep=2pt},
  eq/.style={font=\large, fill=white, inner sep=1pt},
  line/.style={->, line width=0.55pt},
  equiv/.style={->, line width=0.55pt},
  biequiv/.style={<->, line width=0.55pt}
]

\node[tower] (Bhead) at (0,9.0) {{\bf $B_n$ tower}};
\node[tower] (Chead) at (5.6,9.0) {{\bf $C_n$ tower}};
\node[tower] (Dhead) at (11.0,9.0) {{\bf Classical}\\[-1mm]{\bf $D_n$ tower}};

\node[group] (Bspin) at (0,7.7) {$\bT_{\mathrm{Spin}(2n+1)}$};
\node[group] (Bso)   at (0,5.0) {$\bT_{\mathrm{SO}(2n+1)}$};

\node[group] (Csp)   at (5.6,5.0) {$\bT_{\mathrm{Sp}(n)}$};
\node[group] (Cpso)  at (5.6,2.3) {$\bT_{\mathrm{PSp}(n)}$};

\node[group] (Dspin) at (11.0,7.7) {$\bT_{\mathrm{Spin}(2n)}$};
\node[group] (Dso)   at (11.0,5.0) {$\bT_{\mathrm{SO}(2n)}$};
\node[group] (Dpso)  at (11.0,2.3) {$\bT_{\mathrm{PSO}(2n)}$};

\draw[line] (Bspin) -- (Bso);
\draw[line] (Csp) -- (Cpso);
\draw[line] (Dspin) -- (Dso);
\draw[line] (Dso) -- (Dpso);

\draw[equiv] (Dspin.west) -- node[arrowlabel, above] {$W(D_n)$-equivariant} node[eq, below] {$\cong$} (Bspin.east);

\draw[biequiv] (Bso.east) -- node[arrowlabel, above] {$W$-equivariant} node[eq, below] {$\cong$} (Csp.west);

\draw[equiv] (Dso.west) -- node[arrowlabel, above] {$W(D_n)$-equivariant} node[eq, below] {$\cong$} (Csp.east);

\draw[equiv] (Dpso.west) -- node[arrowlabel, above] {$W(D_n)$-equivariant} node[eq, below] {$\cong$} (Cpso.east);

\end{tikzpicture}
\end{center}
\caption{The relationship between the towers of type $B_n$, $C_n$ and $D_n$}
    \label{fig:BCDTowers}
\end{figure}

Computing a sector is most easily done when the corresponding centraliser acts with a strict fundamental domain, since the sector can then be identified with this. In passing we introduce Lemma \ref{lemma0} that allows us to do this more broadly by suitably amplifying both the fixed set and the centraliser in such a way that the quotient remains unchanged. 
Applications of this idea include extending a centraliser from the $D_n$-Weyl group to the $B_n$-Weyl group or inflating the lattice to move down the tower. This is particularly helpful when computing $\Spin$ sectors by comparing them with the $\SO$ case.

The paper is organised as follows. In Section 2 we recall the basic structure of the signed permutation groups arising as 
Weyl groups of type $B_n, C_n$ and $D_n$, with a note on conjugacy classes in these groups. This is followed by a technical subsection in which we elucidate the structure of the quotient of a polysimplex (that is, a direct product of simplices) by a cellular involution. This will play a key role in constructing the sectors. In Section 3 we construct the sectors for the simply connected and adjoint real forms of the Lie groups of type $B_n$ and in Section 4 we do the same for type $C_n$. In both sections we analyse the sectors by conjugacy class representative, constructing the sectors of each of the two forms for each conjugacy class in turn, in terms of the lengths and multiplicities of the (positive and negative) cycles appearing in the conjugacy class description. 

In Section 5 we take the same approach to determine the sectors for the various real Lie groups of type $D_n$. Here there is a subtlety that for some cycle types there are two conjugacy classes rather than one.
Notwithstanding this, we again describe the sectors in terms of the length and multiplicity parameters arising in those cycle structures associated with the conjugacy classes. 

The results of Sections 3-5 are sufficient to provide a complete description of the topology of the sectors and thereby a parametrisation of the Iwahori-spherical block in each case. 

In the final section, we provide applications, beginning in Section \ref{D4 Lie groups} with a full description of the parameter space $\bT_G \q W$ for the groups of type $D_4$. In particular we explore the correspondence between sectors of $\SO(8)$ and the sectors (for sometimes different cycle types) of the isomorphic group $\SSpin^+(8)$. We use the description of the sectors to give a calculation of $K$-theory of the tempered Iwahori algebras corresponding to these groups.

In Section \ref{K-theory ranks} we use our results to give a formula for the ranks of the $K$-theory groups $K_*^W(C(\bT_G))$, where $G$ is $\Spin(2n),\SO(2n),\PSO(2n),\Spin(2n+1),\SO(2n+1),\Sp(n)$ and $\PSp(n)$.

In Section \ref{sector conjecture} we turn to our original homotopy sector conjecture, and discuss it in light of the results of Section  5. While the conjecture falters at this point we show that a slightly weaker result holds, the rational homotopy sector theorem, refining the stratified $K$-theory isomorphism established in \cite{NPWAn}.

Finally in Section \ref{injectivity of sectors} we introduce the concept of injectivity of sectors, proving that this implies continuity of the inverse of the ABPS map $\mu$ restricted to any such sector. In particular in the case of connected centre we establish a homeomorphism between each injective sector and its image in the tempered Iwahori spectrum. As an example of this we exhibit a homeomorphic copy of the $3$-sphere in the Iwahori spectrum for $\PSO(18,F)$. In Appendix \ref{appendixA} we give a table of sectors for $\Spin(18)$ and $\PSO(18)$. We also give an example of a sector which is not injective.

A list of tables is included at the end of the paper to aid the reader.

\section{Preliminaries}

\subsection{The extended quotient(s)}

The Baum-Connes conjecture posits an isomorphism between the equivariant $K$-theory of the classifying space $\underbar{EG}$ for proper actions of a discrete group $\Gamma$ and the $K$-theory of the reduced $C^*$-algebra of the group.  The presence of torsion in $\Gamma$ leads to non-trivial isotropy for the action on $\underbar{EG}$ which is invisible in the classical quotient, but plays an important role in the representation theory. In order to address this Baum and Connes introduced a modified quotient that keeps track of stabilisers and provides a better geometric model for the $K$-theory of the reduced $C^*$-algebra. This model has two forms.

We start by considering the spectral extended quotient (also known as the type-2 extended  quotient). It is constructed as follows:

Let $\Gamma$ be a discrete group acting properly on a Hausdorff topological space $X$. Let $I_2(\Gamma,X)$ be the set of pairs $(\rho, x)$ where $x\in X$ and $\rho$ is an irreducible representation of the isotropy group at $x$. Then $\Gamma$ acts on  $I_2(\Gamma,X)$ by translation on the second factor and by conjugation on the first, i.e., $g\cdot \rho(h) = \rho(g^{-1}hg)$. The spectral extended quotient, denoted $(X\q \Gamma)_2$,  is the quotient defined by this action. Note that there is a natural map from this extended quotient to the classical quotient $X/\Gamma$ given by projection onto the second factor. It is easy to see that the pre-image of a point $\Gamma x$ is parameterised by the equivalence classes of irreducible representations of the isotropy group at that point, which can be identified (though not in general in a natural way) with the conjugacy classes in the isotropy group. 

Alternatively we can build the geometric extended quotient. We define $I(\Gamma,X)$ to be the subspace of the Hausdorff space $\Gamma\times X$ given by  $\{(\gamma,x)\mid \gamma(x)=x\}$. The diagonal  action of $\Gamma$ on $\Gamma\times X$ defined by conjugation on the first factor and translation on the second, restricts to $I(\Gamma,X)$ and the geometric extended quotient of $X$ by $\Gamma$ is defined to be the corresponding quotient. At each point $x\in X$ we have a fibre given by pairs $(\gamma,x)$ modulo the conjugation action of the isotropy group.  As noted above this can be identified with the fibre in the spectral extended quotient.

The sectors are a feature of the geometric extended quotient. As well as admitting a projection onto the classical quotient $X/\Gamma$ by mapping onto the second factor, there is also a map to the set of conjugacy classes given by the map to the first factor.  The fibres of the latter map are the sectors.

\subsection{Signed permutation groups}\label{signed permutations}

The Weyl groups of types $B_n,C_n$ and $D_n$ are identified with groups of signed permutations, see \cite{carter}. For reference we give a quick introduction to the signed permutation groups in this section.

For a given $n$ the group of signed permutations of the set $\{\pm 1, \pm 2,\dots ,\pm n\}$ is the group of permutations of these $2n$ elements which commute with the involution $\iota:=(1\;{-1})(2\;{-2})\dots(n\;{-n})$.

The notion of cycle in the context of signed permutations differs from that in the symmetric group. Let $\rho$ be a signed permutation and consider its cycles as an element of the symmetric group $S_{2n}$. Since conjugation by $\iota$ fixes $\rho$ it permutes the cycles, indeed as $\iota$ is an involution each cycle is either fixed, or flipped with its negation.  The $\iota$-fixed cycles are called the \emph{negative cycles of $\rho$} while the products $\mu(\mu^\iota)$ for non-fixed cycles $\mu$ are called \emph{positive cycles of $\rho$}.  For example $(1\;{-2}\;3\;4\;{-1}\;{2}\;{-3}\;{-4})$ is a negative cycle, whereas $(1\;{-2}\;3\;4)({-1}\;{2}\;{-3}\;{-4})$ is a positive cycle. Both of these examples are said to have length $4$  as signed permutations.

Let $\sigma_p$ denote the negative cycle $(p\;{-p})$ for $p=1,\dots,n$, and for $J\subseteq\{1,\dots,n\}$ let $\sigma_J=\prod_{p\in J} \sigma_p$. The elements $\sigma_p$ generate a group isomorphic to $(\Z/2)^n$ and the group of signed permutations is isomorphic to $(\Z/2)^n\rtimes S_n$ (where of course $S_n$ permutes the factors of $(\Z/2)^n$). This is given by the isomorphism $\langle \sigma_p : p=1,2,\dots,n\rangle\cong (\Z/2)^n$, combined with the map taking the cycles $(p_1\;\dots\;p_k)$ in $S_n$ to the positive cycles $(p_1\;\dots\;p_k)({-p_1}\;\dots\;{-p_k})$. Note that here the numbers $p_1,\dots,p_k$ are all positive. The $S_n$ subgroup is exactly the unique special subgroup of type $A_{n-1}$ in the Weyl group of type $B_n$, or respectively $C_n$, and we refer to these positive elements as the \emph{special positive elements} of the signed permutation group.

Taking our examples from before the negative cycle can be written as 
\[(1\;{-2}\;3\;4\;{-1}\;{2}\;{-3}\;{-4})=(1\;{-1})(2\;{-2})(3\;{-3})(1\;2\;3\;4)({-1}\;{-2}\;{-3}\;{-4})\]
while the positive cycle is written as \[(1\;{-2}\;3\;4)({-1}\;{2}\;{-3}\;{-4})=(2\;{-2})(3\;{-3})(1\;2\;3\;4)({-1}\;{-2}\;{-3}\;{-4}).\]
The length of a (signed) cycle as a signed permutation is defined to be the length of the single cycle which is its image in the quotient $S_n$. We note that the positive cycles are those whose $(\Z/2)^n$ part is even (i.e.\ gives an even number of transpositions) while the negative cycles are those whose $(\Z/2)^n$ part is odd.

Given an $n$-dimensional vector space with a chosen basis $\{e_1,\dots e_n\}$ there is a canonical linear representation of the signed permutation group, where the elements of $S_n$ (in the semi-direct product picture) act by permuting the basis vectors, while the element $\sigma_p$ negates the basis vector $e_p$.  In this representation, the signed permutations act exactly as signed permutations on the set $\{\pm e_1,\dots \pm e_n\}$. As matrices the elements of the signed permutation group have a single non-zero entry in each row and column which can be either $+1$ or $-1$.

Let $\rho, \tau$ be signed permutations, and let $\mu$ be a cycle of $\rho$ as an element of $S_{2n}$.  Note that $(\mu^\tau)^\iota=(\mu^\iota)^\tau$ since $\tau$ commutes with $\iota$. It follows that conjugation of $\rho$ by $\tau$ takes positive cycles (which are pairs of $S_{2n}$ cycles) to positive cycles and negative cycles to negative cycles, and of course also preserves cycle lengths.  Hence, ordering the cycles by length, each conjugacy class has a well-defined cycle type,
\[
(n_1,\dots,n_1,\dots,n_l,\dots,n_l,\mbar_1,\dots,\mbar_1,\dots, \mbar_k,\dots,\mbar_k)
\]
where the numbers $n_1<n_2<\dots$ are the lengths (as defined above) of the positive cycles and the numbers $m_1<m_2<\dots$ are the lengths of negative cycles.  For brevity, and to explicitly indicate multiplicities we introduce the notation
\[
((n_1)^{d_1},\dots, (n_l)^{d_l},(\mbar_1)^{c_1},\dots, (\mbar_k)^{c_k})
\]
for the cycle type.

Moreover in the full group of signed permutations two elements with the same signed cycle type are conjugate hence these cycle types parametrise conjugacy classes.

\subsection{Involutions on products of simplices}\label{involutions}

In constructing the quotients of fixed sets in a torus by centralisers, one is often driven to consider the action of an involution on balls and spheres and their products. In this section we show how to simplify those calculations.

Let $\Delta^n$ be an $n$-simplex equipped with a simplicial involution $\sigma$.  The corresponding permutation of the vertex set is a product of disjoint transpositions and $1$-cycles.

Let $v_1^\pm,\dots v_q^\pm$ denote the pairs of transposed vertices (arbitrarily assigning the labels $+$ and $-$ to elements of each pair) and let $u_1,\dots,u_r$ denote the fixed vertices.

Let $m_i$ denote the midpoint of the $e_i:=[v_i^-,v_i^+]$ edge for $i=1,\dots, q$.

We think of the edge $e_1$ as $\Sigma C(\emptyset)=C\Sigma (\emptyset)$ where $C$ denotes the cone, $\Sigma$ the suspension\footnote{By convention the cone on the empty set consists of a single point (the cone point) and the suspension of the empty set consists of two points.}, the cone point is $m_1$ and the poles of the suspension are $v_1^\pm$. The join of the edges is then given by $(\Sigma C)^q(\emptyset)$ where the cone points are given by $m_1,\dots, m_q$, the poles are given by $v_1^\pm,\dots,v_q^\pm$ and the involution flips the poles of each suspension, while fixing the cone points. The simplex as a whole is thus given by $C^r(\Sigma C)^q(\emptyset)$.

We observe that cones and suspensions always commute.  Specifically for a space $X$ with involution, we define involutions on the cone and the suspension of $X$ by acting with the given involution on $X$, fixing the cone variable and negating the suspension variable.  Combining these actions we claim that $C\Sigma(X)$ is equivariantly homeomorphic to $\Sigma C(X)$.

The spaces  $\Sigma C(X)$ and $C\Sigma(X)$ are quotients of $X\times [0,1]\times [-1,1]$ and $X\times [-1,1]\times [0,1]$ respectively by appropriate relations.  The subtlety here is that the map $(x,t,s)\mapsto (x,s,t)$ does not respect these relations.

We define the following subspaces of the plane:
\begin{align*}
    \suspsusp&:=\{(t,s) : |s|+|t|\leq 1\}\\
    \suspcone&:=\{(t,s) \in \suspsusp : t\geq 0\}\\
    \conesusp&:=\{(t,s) \in \suspsusp: s\geq 0\}\\
    \conecone&:=\{(t,s) \in \suspsusp: s,t\geq 0\}
\end{align*}
We begin by noting that $\Sigma\Sigma(X)$ is homeomorphic to $(X\times \suspsusp)/\sim$ where $(x,t,s)\sim (x',t,s)$ for all $x,x'\in X$ when $|s|+|t|=1$: the homeomorphism is given by the map
\[
\Sigma \Sigma(X)\to (X\times \suspsusp)/\sim,\quad (x,t,s)\mapsto (x,(1-|s|)t,s)
\]
and this is equivariant, equipping $\suspsusp$ with the action negating both $s$ and $t$.  If we instead equip $\Sigma \Sigma(X)$ with an action fixing one or both pairs of suspension points then the map will again be equivariant if we adjust the action on $\suspsusp$ accordingly.

The above homeomorphism restricts to give equivariant homeomorphisms between $\Sigma C(X)$ and $(X\times \suspcone)/\sim$, and between $C\Sigma (X)$ and $(X\times \conesusp)/\sim$.  Note that here we equip $\suspcone$ with the action negating $s$ and $\conesusp$ with the action negating $t$.

The involution on $\suspsusp$ exchanging $s,t$ induces an equivariant identification of $(X\times \suspcone)/\sim$ with $(X\times \conesusp)/\sim$, and hence yields the required equivariant identification of $\Sigma C(X)$ and $C\Sigma (X)$.

Explicitly the composition is
\[
\Sigma C(X)\to C\Sigma (X), \quad (x,t,s)\mapsto (x,\frac{s}{1-(1-|s|)t},(1-|s|)t).
\]

Note that $CC(X)=\Sigma C(X) \cap C\Sigma (X)$ which is therefore identified with $(X\times \conecone)/\sim$ (since $\suspcone\cap \conesusp=\conecone$).

We now claim that $CC(X)$ is equivariantly homeomorphic to $C(X)\times [0,1]$ (with trivial action on $[0,1]$) and that $\Sigma C(X)$ is equivariantly homeomorphic to $C(X)\times [-1,1]$ (with negation action on $[-1,1]$).  We will want to consider products of spaces of this type with diagonal involution, and to that end it is simpler to work with the Cartesian product picture.

The homeomorphism for $CC(X)$ is defined by $\phi:C(X) \times [0,1] \to (X\times \conecone)/\sim$:
\[
\phi(x,t,s) = \begin{cases}
    (x,1-s,s-\frac 12 (1-t)) & s+t\geq 1\\
    (x,t,\frac 12 s) & s+t \leq 1.
\end{cases}
\]
It is then easy to construct the corresponding equivariant map $\psi:C(X) \times [-1,1] \to (X\times \suspcone)/\sim$:
\[
\psi(x,t,s) = \begin{cases}
    (x,1-s,s-\frac 12 (1-t)) & s\geq 1-t\\
    (x,t,\frac 12 s) & t-1\leq s\leq 1-t\\
    (x,1+s,s+\frac 12 (1-t)) & s\leq t-1.
\end{cases}
\]
We remark that if both $\suspcone$ and $[-1,1]$ are equipped instead with trivial actions then the map $\psi$ is again equivariant and similarly we can deduce that
$\Sigma C(X)$ where the action fixes cone points \emph{and suspension points} is equivariantly homeomorphic to $C(X)\times [-1,1]$ with trivial action on the second factor.

\bigskip

Returning to the case of a simplex $\Delta^n$ with involution transposing $q$ pairs of vertices and fixing $r$ vertices we now have
\[
\Delta^n \cong C^r(\Sigma C)^q(\emptyset)\cong C^{q+r}\Sigma^q(\emptyset).
\]
Since $CC(X)\cong C(X)\times I$, if $q+r\geq 2$ we have 
\[
C^{q+r}\Sigma^q(\emptyset)\cong CCC^{q+r-2}\Sigma^q(\emptyset)\cong CC^{q+r-2}\Sigma^q(\emptyset)\times [0,1]
\]
and we can repeat this to obtain $C\Sigma^q(\emptyset)\times [0,1]^{q+r-1}$.  The identification of $C^{q+r}\Sigma^q(\emptyset)$ with the latter also holds trivially if $q+r=1$. Recall that the involution on the $[0,1]$ factors is trivial.

Similarly $C\Sigma^q(\emptyset)\cong C\Sigma^{q-1}(\emptyset)\times [-1,1]$ and repeating this we obtain $C(\emptyset)\times [-1,1]^q$.

Thus we conclude that the simplex $\Delta^n$ is equivariantly homeomorphic to the cuboid $[-1,1]^q\times [0,1]^{p+q-1}$ where the involution acts diagonally by negation on the cube $[-1,1]^q$ and trivially on the second cube. The quotient space is therefore the product $C(\RP^{q-1})\times [0,1]^{q+r-1}$.

The advantage of this perspective is that it immediately extends to the case of a product of simplices $\Delta_1\times\dots\times \Delta_k$ equipped with an involution acting diagonally. Writing $q_1,r_1\dots, q_k,r_k$ for the corresponding numbers of transposed pairs and fixed vertices for the factors of the involution, we obtain an equivariant identification with 
\[
\prod_{i=1}^k [-1,1]^{q_i}\times [0,1]^{q_i+r_i-1}
\]
where the involution acts diagonally on all $[-1,1]$ factors.  This yields the quotient $C(\RP^{q-1})\times [0,1]^{q+r-k}$, where $q=\sum\limits_{i=1}^k q_i$ and $r=\sum\limits_{i=1}^k r_i$.

Setting $m=q+r-k$ and $n=q$ this quotient is of course homeomorphic to the quotient of a ball of dimension $m+n$ by the standard `reflection' in a subspace of dimension $m$, i.e., by the involution with signature $(0, m,n)$. Given the prevalence of quotients of this form in the calculations below, we introduce the notation $V(m,n)$ for this singular space, noting that it is a manifold away from the singular locus of dimension $m$, each point of which has link homeomorphic to a real projective space of dimension $n-1$.

\section{The $B_{n}$ case}\label{B_n case}

\subsection{Roots, Weyl group and maximal tori}

Let $\t\cong \R^n$ be a Cartan subalgebra in the Lie algebra of type $B_n$. The Dynkin diagram for $B_n$ has $n-1$ long roots and one short root:
\[
r_1=(1,-1,0,\dots,0),\dots r_{n-1}=(0,\dots,0,1,-1), r_n=(0,\dots ,0,1)\in \t^*,
\]
where we identify $\t^*$ with $\t$ using the inner product.  With this identification the coroots are $r_i^\vee = r_i$ for $i=1,\dots n-1$ and $r_n^\vee=2r_n$.

The Weyl group $W=W(B_n)$ is the group of signed permutations with its canonical representation on the coordinate basis of $\R^n$:
\[
e_1=(1,0\dots,0),\dots,e_n=(0,\dots,0,1)=r_n.
\]

\bigskip

For $B_n$ the connection index is $2$ and hence there are two compact connected semisimple Lie groups of type $B_n$.  Specifically there are $\Spin(2n+1)$ (simply connected) and $\SO(2n+1)$ (adjoint).

The nodal lattice for a maximal torus in the simply connected $B_n$ group is the even parity lattice in $\Z^n$.  On the other hand the nodal lattice for the adjoint case $\SO(2n+1)$ is given by vectors having integer pairing with the root lattice, hence this is the full integer lattice. In the latter case the maximal torus, which we denote by $T$, is thus naturally identified with $\TT^n$.

We can also identify $\t$ with the subspace $\R^{n+1}_0$ of sum zero vectors in $\R^{n+1}$, extending $r_1^\vee\dots,r_{n-1}^\vee$ by $0$ and $r_n^\vee$ by $-2$ in the final coordinate. The nodal lattice in the simply connected case is then identified with $(\Z^n\times 2\Z)\cap \R^{n+1}_0$ and hence the maximal torus is identified with
\[
\coveringtorus=\{(\alpha_1,\dots,\alpha_n,\beta)\in \TT^{n+1} : \alpha_1\dots\alpha_n\beta^2=1\}.
\]
The map $\R^{n+1}_0\to \coveringtorus$ is given by
\[
(x_1,\dots,x_n,y)\mapsto (e^{2\pi i x_1},\dots,e^{2\pi ix_n},e^{1\pi iy}).
\]
Note that the maximal torus $T$ for the \emph{adjoint} type group $\SO(2n+1)$ is double covered by $\coveringtorus$, via the map which omits the $\beta$ coordinate.

\bigskip

The action of the permutation subgroup of $W$ on $\R^n$ extends to $\R^{n+1}$ acting trivially on the final factor and this preserves the subspace $\R^{n+1}_0$. The action of the negative cycle $\sigma_p$, which corresponds to a reflection in the short root $e_p$, extends to
\[
\tilde{\sigma}_p:\R^{n+1}_0\to \R^{n+1}_0,\quad (x_1,\dots,x_n,y) \mapsto (x_1,\dots, -x_p,\dots,x_n,y+2x_p)
\]
and the corresponding map on the torus $\coveringtorus$ is given by 
\[
(\alpha_1,\dots,\alpha_n,\beta)\mapsto (\alpha_1,\dots,\alpha_p^{-1},\dots,\alpha_n,\beta\alpha_p).
\]
Note that the final term only has a factor of $\alpha_p$ not $\alpha_p^2$ due to the construction of the map $\R^{n+1}_0\to \coveringtorus$.

This action is constructed to be that induced by the above identification of $\coveringtorus$ with the maximal torus in the simply connected $B_n$ group.

\bigskip

\subsection{Fixed sets and actions of centralisers}\label{B_n fixed sets} For each conjugacy class in $W$ we take a representative $w$ and we will consider its fixed sets in the tori $\coveringtorus, T$ and the centraliser of $w$ in $W$. Using the notation from Section \ref{signed permutations}
let $w$ be a representative of the conjugacy class of type
    \[
((n_1)^{d_1},\dots (n_l)^{d_l},(\mbar_1)^{c_1},\dots (\mbar_k)^{c_k}).
\]
For simplicity in each class we choose a representative $w$ so that its positive cycles are special positive elements in the sense of Section \ref{signed permutations}. The element $w$ determines a partition of the basis of $\R^n$ as follows: Let $I_{n_i,j}$ denote the indices of coordinates corresponding to the $j$th $n_i$-cycle ($j=1,\dots d_i$), and similarly for $I_{\mbar_i,j}$. Clearly for each corresponding subspace, the element which negates the corresponding basis vectors (i.e.\ the product of negative $1$-cycles) commutes with $w$.

The centraliser of $w$ is isomorphic to the product of factors:
\[
\begin{cases}
(\langle \sigma_{I_{n_i,j}}\rangle \times (\Z/n_i\Z))\wr_{d_i}S_{d_i} & \text{for each positive cycle}\\
(\Z/2m_i\Z)\wr_{c_i}S_{c_i}& \text{for each negative cycle.}
\end{cases}
\]
Note that a negative cycle of length $m_i$ has order $2m_i$ with the $m_i$th power yielding the $-1$ element on the corresponding subspace.

\bigskip

In the Lie algebra (here viewed as $\R^n$) a vector $(x_1,\dots x_n)$ is fixed by $w$ if
\begin{itemize}
    \item for all $i,j$ and $p,q\in I_{n_i,j}$ we have $x_p=x_q$;
    \item for all $i,j$ and $p\in I_{\mbar_i,j}$ we have $x_p=0$.
\end{itemize}
In the adjoint case a point $(\alpha_1,\dots,\alpha_n)$ of the maximal torus  is fixed if
\begin{itemize}
    \item for all $i,j$ and $p,q\in I_{n_i,j}$ we have $\alpha_p=\alpha_q$;
    \item for all $i,j$ and $p,q\in I_{\mbar_i,j}$ we have $\alpha_p=\alpha_q=\pm 1$.
\end{itemize}
The second condition follows from the requirements that $\alpha_p=\alpha_p^{-1}$ (since the point must be fixed by the element $w^{m_i}$) and then given that each $\alpha_p$ is unchanged by inversion the element $w$ permutes the coordinates $I_{\mbar_i,j}$.

Since the values of $\alpha_p$ are constant over each $I_{n_i,j}$ we can introduce notation $\alpha_{n_i,j}$ for the common value and similarly we write $\alpha_{\mbar_i,j}$ for the value over $I_{\mbar_i,j}$.
\bigskip

In the simply connected case a point $(\alpha_1,\dots,\alpha_n,\beta)$ (such that $\alpha_1\dots\alpha_n\beta^2=1$) of the maximal torus $\coveringtorus$ is fixed if
\begin{itemize}
    \item for all $i,j$ and $p,q\in I_{n_i,j}$ we have $\alpha_p=\alpha_q$;
    \item for all $i,j$ and $p,q\in I_{\mbar_i,j}$ we have $\alpha_p=\alpha_q=\pm 1$.
    \item the product $\prod_{i=1}^k \prod_{j=1}^{c_i}\alpha_{\mbar_i,j}=1$, where as before $\alpha_{\mbar_i,j}$ denotes the value of $\alpha_p$ for any $p$ in $I_{\mbar_i,j}$.
\end{itemize}
The third condition follows from the fact that the $\beta$ coordinate is fixed, while each negative cycle has the effect of multiplying $\beta$ by $\alpha_{\mbar_i,j}$.

\bigskip

Note that in the adjoint case the fixed set has $\prod_{i=1}^k 2^{c_i}$ components arising from the choices of $\pm1$ for $\alpha_{\mbar_i,j}$. On the other hand in the simply connected case there are half as many possible choices for these values since the product must be one; however, there is additionally the question of choosing a square root $\beta$ of $\prod_{p=1}^n \alpha_p^{-1}$. The choice of $\beta$ may or may not contribute additional components.

We rewrite the value of $\beta^2$ as $(\prod_{i=1}^l\prod_{j=1}^{d_i}\alpha_{n_i,j}^{-n_i})(\prod_{i=1}^k\prod_{j=1}^{c_i}\alpha_{\mbar_i,j}^{-m_i})$.  If any of the values $n_i$ is odd then traversing a loop for any of the corresponding $\alpha_{n_i,j}$ variables has the effect of negating the square root $\beta$.  Hence in these cases the choice of $\beta$ does not yield additional components so there are half as many components as in the adjoint case.

On the other hand when all the values $n_i$ are even we set $\gamma=\beta\prod_{i=1}^l\prod_{j=1}^{d_i}\alpha_{n_i,j}^{n_i/2}$. 
Then $\gamma^2=\prod_{i=1}^k\prod_{j=1}^{c_i}\alpha_{\mbar_i,j}^{-m_i}$ (which is $\pm 1$) hence the choice of $\gamma$ doubles the number of components, recovering the same value as in the adjoint case.

\bigskip

\subsection{The sectors}\label{B_n sectors} Now we turn to the question of the number of components in the quotient.  For the adjoint case $\SO(2n+1)$ this number was computed by Solleveld \cite{Sol}. We repeat this here for completeness:  The action of the centraliser on the coordinates $\alpha_{\mbar_i,j}$, which parameterise the components of the fixed set, simply permutes $\alpha_{\mbar_i,1},\dots \alpha_{\mbar_i,c_i}$.  Hence what remains in the quotient is the number of $-1$ terms for each $i$ which ranges from $0$ to $c_i$.  The number of components in the quotient is therefore $\prod_{i=1}^k(c_i+1)$.

In the simply connected case we additionally have the constraint that the product $\prod_{j=1}^{c_i}\alpha_{\mbar_i,j}=1$ which means that while the number of $-1$ terms can again range from $0$ to $c_i$ for each $i$, the total number must be even. This yields a number of components arising from the $\alpha_{\mbar_i,j}$ choices which is the number of tuples $(a_1,\dots,a_k)$ with $a_i\in \{0,\dots,c_i\}$ and with $\sum_{i} a_i$ even:
\[
\left\lceil \frac 12 \prod_{i=1}^k (c_i+1)\right\rceil.
\]
This formula is obtained from the observation that if any $c_i$ is odd and hence the product is even, then the number of choices of $(a_1,\dots,a_k)$ yielding an odd sum or an even sum is the same, while if each $c_i$ is even then there is exactly one more even choice than odd choice.

In the case that there are any positive cycles of odd length this formula gives the total number of components since the choice of $\beta$ does not yield additional components.  In the case that there are only even length positive cycles the total number of components is
\[
\left\lceil \frac 12 \prod_{i=1}^k (c_i+1)\right\rceil+1
\]
because when any $\alpha_{\mbar_i,j}$ is $-1$ then the action of the corresponding cycle, which lies in the centraliser, has the effect of negating $\beta$ and thus the $\beta$ again does not give additional components in the quotient.  However when $\alpha_{\mbar_i,j}=1$ for all $i,j$ the centraliser does not change $\beta$ and thus there is one additional component arising from the choice of $\beta=\pm1$.

\bigskip

We now turn to examining the components of the quotient. In the adjoint case, as observed by Solleveld, the components of the quotient are all products of simplices, so we will focus on the simply connected case.

Given an element $w\in W(B_n)$, we split the indices $\{1,\dots,n\}$ into the sets
\begin{itemize}
    \item $I^+_{\ev}=\bigcup\limits_{n_i \text{ even}}\bigcup\limits_j I_{n_i,j}$
    
    \item $I^+_{\odd}=\bigcup\limits_{n_i \text{ odd}}\bigcup\limits_jI_{n_i,j}$

    \item $I^-=\bigcup\limits_{m_i}\bigcup\limits_j I_{\mbar_i,j}$.
\end{itemize}

For $X\subseteq \{1,\dots, n\}$, we let $W(B_X)$ denote the copy of $W(B_{|X|})$ in $W(B_n)$ corresponding to this subset, using the usual identification of these groups as signed permutations. The splitting of $\{1,\dots, n\}$ yields an inclusion
\[
W(B_{I^+_{\ev}})\times W(B_{I^+_{\odd}})\times W(B_{I^-}) \hookrightarrow W(B_n)
\]
such that $w$ is the image of an element on the left-hand side which we denote by $(w_{\ev},w_{\odd},w_\neg)$. The centraliser of $w$ in $W(B_n)$ is identified with the product \[
Z_{W(B_{I^+_{\ev}})}(w_{\ev})\times Z_{W(B_{I^+_{\odd}})}(w_{\odd})\times Z_{W(B_{I^-})}(w_\neg).
\]

It turns out that to decompose the torus it is better to use $I^+_{\ev}\cup J$ where $J=I^+_{\odd}\cup I^-$: the reason for this is that both $Z_{W(B_{I^+_{\odd}})}(w_{\odd})$ and $Z_{W(B_{I^-})}(w_\neg)$ contain elements which act on the $\beta$ coordinate by multiplication by odd powers of the corresponding $\alpha$ coordinate.

We write $(w_{\ev},w_{J})$ for the corresponding decomposition of $w$, and the centraliser splits as $Z_{W(B_{I^+_{\ev}})}(w_{\ev})\times Z_{W(B_{J})}(w_{J})$.  The fixed set $\coveringtorus^w$ can be identified with $T_\ev^{w_{\ev}}\times \coveringtorus^{w_J}_J$, where $T_\ev$ is the standard maximal torus in $\SO_{I^+_{\ev}}(2n+1)$ while $\coveringtorus_J$ is the standard maximal torus in $\Spin_J(2n+1)$ (here $\SO_X(2n+1),\Spin_X(2n+1)$ denotes the matrix or spin group for the corresponding subspace of $\R^{2n+1}$). The identification is given by the map
\[
(\alpha_1,\dots,\alpha_n,\beta) \mapsto \big((\alpha)_{\ev},((\alpha)_{\odd},(\alpha)_-,\gamma)\big)
\]
where
\[
\gamma=\beta\prod_{\substack{i \in \{1,\dots,l\}\\n_i\text{ even}}}\prod_{j=1}^{d_i}\alpha_{n_i,j}^{n_i/2}.
\]
We will show that this is equivariant.  It is easy to see that it is equivariant for the second factor $Z_{W(B_{J})}(w_J)$. The first factor $Z_{W(B_{I^+_{\ev}})}(w_{\ev})$ is generated by three types of elements:  we have cycles from $w_{\ev}$; the involution $\prod_{p\in I_{n_i,j}} \sigma_p$ for each $i,j$; and permutations of coordinates in cycles of a given length. It is again easy to see that the map is equivariant for elements of the first and third type, since these will fix both $\beta$ and $\gamma$. An element of the second type $\prod_{p\in I_{n_i,j}} \sigma_p$ acts to invert the coordinate $\alpha_{n_i,j}$ and to multiply $\beta$ by $\alpha_{n_i,j}^{n_i}$. Hence these elements also fix $\gamma$ and thus the map is indeed equivariant as claimed.

Looking at the first factor where we have $Z_{W(B_{I^+_{\ev}})}(w_{\ev})$ acting on $T_\ev^{w_{\ev}}$, the cycles of $w_\ev$ act trivially.  This leaves the action on the coordinates $(\alpha_{n_i,1},\dots, \alpha_{n_i,d_i})$ of the signed permutation group $W(B_{d_i})$, or equivalently $W(C_{d_i})$.  Since these coordinates correspond to the simply connected $C_{d_i}$ group the quotient must be a $d_i$ simplex.  It follows that $T_\ev^{w_{\ev}}/Z_{W(B_{I^+_{\ev}})}(w_{\ev})$ is the product over $i$ of $d_i$-simplices (for even $n_i$).

\bigskip

We now split into cases depending on whether or not $I^+_{\odd}$ is empty. When this is empty the fixed set $\coveringtorus_J^{w_J}$ is finite; this is the component group of $\coveringtorus^w$.  As discussed above the quotient has cardinality
    \[
\left\lceil \frac 12 \prod_{i=1}^k (c_i+1)\right\rceil+1.
\]

When $I^+_{\odd}$ is non-empty we begin with the identity component of the group $\coveringtorus_J^w$. This consists of those points of $\coveringtorus_J^w$ for which $\alpha_{\mbar_i,j}=1$ for all $i,j$, and we can therefore identify it with $\t_{\odd}^w / (\t_{\odd}^w\cap \Gamma_{\odd})$  where $\t_{\odd}$ and $\Gamma_{\odd}$ denote the Cartan subalgebra and nodal lattice for $\Spin_{I^+_\odd}(2n+1)$.  Here we will use the model of $\t_{\odd}$ as $\R^{I^+_\odd}$ and of $\Gamma_{\odd}$ as the even parity lattice in $\Z^{I^+_\odd}$.

For a vector in $\t_\odd$ to lie in the fixed set the values of $x_p$ must be constant over $p\in I_{n_i,j}$ for odd $n_i$. As in the earlier discussion for the coordinates $\alpha_p$, we introduce the notation $x_{n_i,j}$ for the common value of $x_p, p\in I_{n_i,j}$. An element of the fixed set lies in the lattice if each $x_{n_i,j}$ is an integer and the sum $\sum\limits_{n_i \text{ odd}} \sum\limits_j x_{n_i,j}$ is even. Hence the identity component of the fixed set is given by the quotient of $\t_\odd^w$ by this lattice.

For the identity component, the task is therefore to compute the quotient of $\t_\odd^w$ by the group $H:=(\t_\odd^w\cap \Gamma_\odd)\rtimes Z_{W(B_{I^+_\odd})}(w_\odd)$. To this end we introduce the following lemma.

\begin{lemma}\label{lemma0}
    Let $X$ be a topological space, let $G$ be a group $G$ acting on $X$, and let $H$ be a subgroup of $G$, not necessarily normal. Then there is a natural homeomorphism
\[
H\backslash X \cong G \backslash(G/H \times X).
\]
\end{lemma}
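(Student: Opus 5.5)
Here $G$ acts on $G/H\times X$ diagonally, by $g\cdot(kH,x)=(gkH,gx)$. The idea is that every $G$-orbit in $G/H\times X$ meets the slice $\{eH\}\times X$, and two points of that slice lie in the same $G$-orbit exactly when they lie in the same $H$-orbit. So I would construct explicit maps in both directions and check they are mutually inverse and continuous for the quotient topologies. Throughout, $G/H$ carries the discrete topology, as for the finite groups used later in the paper.

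\emph{Maps and bijection.}
\begin{enumerate}
\item Define $\Phi: H\backslash X\to G\backslash(G/H\times X)$ by $\Phi(Hx)=G\cdot(eH,x)$. It is well defined: for $h\in H$,
\[
(eH,hx)=h\cdot(h^{-1}H,x)=h\cdot(eH,x).
\]
\item Define $\Psi$ by $\Psi(G\cdot(gH,x))=H\,g^{-1}x$. It is well defined, since replacing $g$ by $gh$ gives $h^{-1}g^{-1}x$, which lies in the same $H$-orbit. It is also invariant under the action: replacing $(gH,x)$ by $(kgH,kx)$ gives $g^{-1}k^{-1}kx=g^{-1}x$.
\item Check the compositions. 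We have $\Psi\Phi(Hx)=Hx$. In the other direction,
\[
\Phi\Psi(G\cdot(gH,x))=G\cdot(eH,g^{-1}x)=G\cdot\big(g^{-1}\cdot(gH,x)\big)=G\cdot(gH,x).
\]
So $\Phi$ and $\Psi$ are inverse bijections.
\end{enumerate}

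\emph{Continuity.} The map $X\to G/H\times X$, $x\mapsto(eH,x)$, is continuous. Composing with the quotient map gives a continuous map that is constant on $H$-orbits. Hence $\Phi$ is continuous by the universal property of the quotient $X\to H\backslash X$.

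For $\Psi$, use that $G/H$ is discrete, so $G/H\times X=\bigsqcup_{gH}\{gH\}\times X$. Choose a coset representative $g$ for each coset. On the piece $\{gH\}\times X$, the map $(gH,x)\mapsto H\,g^{-1}x$ is continuous, because $g^{-1}$ acts by a homeomorphism. These pieces glue to a continuous map on $G/H\times X$, which is $G$-invariant by step 2. It therefore descends to the continuous map $\Psi$ on $G\backslash(G/H\times X)$.

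\emph{Main obstacle.} The only real point is the topology on $G/H$ in the continuity of $\Psi$. If $G$ is a topological group and $G/H$ carries its quotient topology, I would instead use continuity of $(g,x)\mapsto g^{-1}x$ on $G\times X$. That map descends through $G\times X\to G/H\times X$ to a continuous map into $H\backslash X$, using that this projection is a quotient map (it is open when $G\to G/H$ is open). For the discrete groups in our application, the discrete case above suffices. Naturality in $X$, for $G$-equivariant maps, follows directly from the formulas.
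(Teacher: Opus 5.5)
Your proof is correct, and it uses the same underlying identification as the paper: your $\Phi$ and $\Psi$ are exactly the maps induced on quotients by the shear $(g,x)\mapsto(g,gx)$ and its inverse $(g,y)\mapsto(g,g^{-1}y)$. The packaging differs, though.

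The paper never writes maps between the quotient spaces. Instead it equips $G\times X$ with two commuting $G\times H$-actions:
\begin{itemize}
\item $G$ by left translation on the first factor only, and $H$ diagonally;
\item $G$ diagonally, and $H$ by right translation on the first factor only.
\end{itemize}
It notes that the shear intertwines these two actions. It then takes quotients in the two possible orders: $G\backslash(G\times X)\cong X$ followed by $H$ gives $H\backslash X$, while $(G\times X)/H\cong G/H\times X$ followed by $G$ gives $G\backslash(G/H\times X)$. The gain is symmetry. An equivariant homeomorphism descends to a homeomorphism of quotients, so bijectivity and continuity in both directions come at once, with no coset representatives and no separate well-definedness checks.

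Your slice argument (every $G$-orbit meets $\{eH\}\times X$, and $H$ is the stabiliser of that slice) is more hands-on. Its merit is that it shows exactly where the topology of $G/H$ matters. Continuity of $\Phi$ is automatic, whereas continuity of $\Psi$ needs either $G/H$ discrete, or $G\to G/H$ open together with a jointly continuous action. These are the same facts the paper uses tacitly when it treats the shear as a homeomorphism of $G\times X$ and identifies $(G\times X)/H$ with $G/H\times X$.

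One small correction: the groups $G$ to which the lemma is applied later are lattices extended by centraliser subgroups. They are therefore infinite (though discrete), not finite. Since $G/H$ is still discrete (of order $2$ or $4$), your argument applies unchanged.
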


The point of this identity is that it will in certain circumstances allow us to replace the action of a group $H$ on $X$ which does not admit a \emph{strict} fundamental domain, with a more tractable action of a larger group $G$ whose action on $X$ does admit a strict fundamental domain.

\begin{proof}
    Consider two copies of $G\times X$ with the following commuting actions: On the first copy we let $H$ act on the right of $G$ and by the restriction of the given action on $X$ while $G$ acts on the left of $G$ and acts trivially on $X$.

    On the second copy let $H$ act on the right of $G$ but act trivially on $X$ while $G$ acts on the left of $G$ and by the given action on $X$.

    The map $(g,x)\mapsto (g,gx)$ gives a $G\times H$-equivariant identification of the two copies of $G\times X$. The quotient of the first copy by both groups yields $H\backslash X$ while the quotient of the second by definition gives $G\backslash(G/H\times X)$.
\end{proof}

To apply the lemma we introduce the larger group $G=(\t_\odd^w\cap \Z^{I^+_\odd})\rtimes Z_{W(B_{I^+_\odd})}(w_\odd)$, recalling that $\t_\odd^w\cap \Gamma_\odd$ is the even parity lattice in $\t_\odd^w\cap \Z^{I^+_\odd}$.

Let $N$ denote the normal subgroup of $G$ generated by the lattice along with the involutions inverting each $x_{n_i,j}$ (for $n_i$ odd). The action of $N$ on $\t_\odd^w$ admits a strict fundamental domain (unlike the action of $N\cap H$) which is the cube given by $x_{n_i,j}\in [0,\frac 12]$ for each $i,j$. Hence the product of $G/H$ with the cube gives a fundamental domain for the action of $N$ on $G/H\times \t_\odd^w$.  This is no longer strict; specifically, we consider those elements of $N$ which are reflections in the (codimension $1$) faces of the cube.  The reflections in faces containing the origin all lie in $N\cap H$ while reflections in faces containing $(\frac 12,\dots,\frac 12)$ do not.  Hence the two copies of the latter faces, indexed by $G/H$, will be identified. The resulting space is thus the cube doubled over the union of faces, $U$, containing $(\frac 12,\dots,\frac12)$, which is the suspension of $U$.

It remains to act with the group $G/N=\prod\limits_{n_i\text{ odd}}\prod_j S_{d_i}$. This again admits a strict fundamental domain in the doubled cube, which is the suspension of the corresponding (strict) fundamental domain in $U$.  We can identify $U$ with its image in $\t^w_\odd/(1,1,\dots,1)\R$ which is a convex subset of this space.  The fundamental domain for the action on $U$ is itself identified as a convex set with non-empty interior, and hence the quotient is a product of simplices (the fundamental domain for the action on one copy of the cube) doubled over this convex set.

\bigskip

 Turning to the non-identity components of  $\coveringtorus_J^w$, these are parameterised by the values $\alpha_{\mbar_i,j}$, with at least one coordinate such that $\alpha_{\mbar_i,j}=-1$.  The corresponding negative cycle in the centraliser has the effect of negating the value of $\gamma$ in the parametrisation of the torus, identifying the component of $\coveringtorus_J^w$ with a component of $T_J^w$.  The quotient of the component by its stabiliser is thus the same as for the corresponding component in the adjoint $B_n$ case, hence this yields a product of $d_i$-simplices.

Hence we have $\left\lceil \frac 12 \prod_{i=1}^k (c_i+1)\right\rceil-1$ components which are products of simplices along with one component which is a doubled product of simplices.

The whole quotient is given by the product of $T_\ev^{w_{\ev}}/Z_{W(B_{I^+_{\ev}})}(w_{\ev})$ with $\coveringtorus_J^w/Z_{W(B_{J})}(w_{J})$. 
Each component is a ball of dimension $\delta:=
\sum_i d_i$. When $I^+_{\odd}$ is empty the components are products of simplices
\[
\prod_{i=1}^l\Delta^{d_i}
\]
while when $I^+_{\odd}$ is non-empty, the non-identity components are again polysimplices while the identity component has twice the volume, being the union of two polysimplices over a `convex' region in their boundary.

To summarise, the structure of the sector associated to a conjugacy class with negative multiplicities $c_i$ for the simply connected case of $B_n$ is given in the following table. Note that, as in Solleveld's calculation for the adjoint form, the components are all contractible.

\begin{table}[H]
\[
\begin{array}{c|c}
   C:=\prod_{i=1}^k (c_i+1) &\\
   &\\
 \hline
 I^+_\odd \text{ empty}   & (\lceil \frac 12 C \rceil +1)\times V(\delta,0)   
 \\
 \\
  I^+_\odd \text{ non-empty}    & \lceil \frac 12 C \rceil\times V(\delta,0)
\end{array}
\]

\bigskip

    \caption{\label{simply_connected_B_n_sectors}The structure of a sector for the simply connected form of $B_n$}
\end{table}

\section{The $C_n$ case}\label{C_n case}

\subsection{Roots, Weyl group, and maximal tori}
Let $\t\cong \R^n$ be a Cartan subalgebra in the Lie algebra of type $C_n$. The Dynkin diagram for $C_n$ has $n-1$ short roots and one long root:
\[
r_1=(1,-1,0,\dots,0),\dots r_{n-1}=(0,\dots,0,1,-1), r_n=(0,\dots ,0,2)\in \t^*,
\]
where we identify $\t^*$ with $\t$ using the inner product.  With this identification the coroots are $r_i^\vee = r_i$ for $i=1,\dots n-1$ and $r_n^\vee=\frac 12 r_n=e_n$.  Since this root system is the dual of the $B_n$ root system the Weyl groups are canonically identified.

\bigskip

For $C_n$ the connection index is again $2$ and hence there are two compact connected semisimple Lie groups of type $C_n$.  Specifically there are $\Sp(n)$ (simply connected) and $\PSp(n)$ (adjoint).
\bigskip

The nodal lattice for a maximal torus in the simply connected $C_n$ group is the lattice  $\Z^n$.  On the other hand the nodal lattice for the adjoint case is given by elements having integer pairing with the root lattice. This is therefore the sum of the integer lattice and multiples of the element $(\frac 12,\dots,\frac 12)$.

    The torus in the simply connected case is therefore just the quotient $T=\R^n/\Z^n \cong \TT^n$. Moreover this is $W$-equivariantly identified with the torus in the adjoint $B_n$ case, where $W$ is the Weyl group of type $B_n$ or equivalently of type $C_n$.  Indeed in both cases the Weyl group acts by signed permutations on the coordinates of the Lie algebra (which is identified with $\R^n$) and the torus is obtained as the quotient by the integer lattice.

    In the adjoint case the torus is $T/\cZ$ where $\cZ$ is the centre of the simply connected Lie group of type $C_n$. The centre is given by plus or minus the identity matrix.  This acts on the torus $T$ by negating all coordinates, and on the Lie algebra this corresponds to shifting by the diagonal element $( \frac 12,\dots,\frac 12)$.
    
    \subsection{Fixed sets and actions of centralisers}
    Given the identification of the adjoint $B_n$ case with the simply connected $C_n$ case, the fixed sets for the action on the corresponding torus $T$ are as discussed in Section \ref{B_n case}.

Using the notation from Section \ref{signed permutations}
let $w$ be a representative of the conjugacy class of type
    \[
((n_1)^{d_1},\dots (n_l)^{d_l},(\mbar_1)^{c_1},\dots (\mbar_k)^{c_k}).
\]
As before we make the convention that positive cycles take basis vectors to basis vectors.
To examine the $w$-fixed set in $T/\cZ$ we consider its preimage in $T$.  This consists of points which are fixed by $w$ in $T$ as above, along with the set of points which are negated by $w$, which we denote by $A^w$. The fixed set $(T/\cZ)^w$ is the image in $T/\cZ$ of $T^w\cup A^w$.

The condition (defining $A^w$) that $w$ acts by negating coordinates of the torus means that for each positive cycle $(e_{p_1}\; \dots \; e_{p_{n_i}})(-e_{p_1}\; \dots \; -e_{p_{n_i}})$ of $w$ each point of $A^w$ must then satisfy
\[
(\alpha_{p_2},\dots,\alpha_{p_{n_i}},\alpha_{p_1})=-(\alpha_{p_1},\dots,\alpha_{p_{n_i}}).
\]
From this it follows that $\alpha_{p_1}=(-1)^{n_i}\alpha_{p_1}$ hence such points can exist only if $n_i$ is even for each $i$.

For a negative cycle of length $m_i$ in $w$, the condition that $w$ negates a point implies that $w^{m_i}$ changes each coordinate by $(-1)^{m_i}$. On the other hand $w^{m_i}$ acts by inverting all coordinates of the negative $m_i$ cycle. Thus each of these coordinates satisfies $(-1)^{m_i}\alpha=\alpha^{-1}$, so we require $\alpha=\pm 1$ if $m_i$ is even and $\alpha=\pm \sqrt{-1}$ if $m_i$ is odd.

Hence we deduce that if all $n_i$ are even then $A^w$ consists of points whose coordinates within each factor corresponding to $I_{n_i,j}$ or $I_{\mbar_i,j}$ are alternating $(\alpha,-\alpha,\dots)$, and with the additional constraint that within the $I_{\mbar_i,j}$ factors the values are either $\{\pm1\}$ or $\{\pm \sqrt{-1}\}$ depending on the parity of $m_i$. In the case that any $n_i$ is odd then $A^w$ is empty.

As in the $B_n$ case, given an element $w$, we can split the indices $\{1,\dots,n\}$ into the sets $I^+_{\ev},I^+_{\odd}, I^-$ corresponding respectively to the positive even cycles, positive odd cycles, and negative cycles.  The above discussion shows that $A^w$ is non-empty exactly when $I^+_{\odd}$ is empty. In this case the centraliser of $w$ factors as
\[
Z_{W(C_n)}=Z_{W(C_{I^+_{\ev}})}(w_{\ev})\times Z_{W(C_{I^-})}(w_\neg).
\]
Moreover there is a factorisation of $A^w$ as $A^{w_\ev}\times A^{w_\neg}$ corresponding to the splitting of indices and the factors of the centraliser act on the factors of $A^w$.

The element $w_\ev$ consists of positive even length cycles. For each cycle $(p_1\,p_2\,\dots\,p_{n_i})$ of $w_\ev$, we take alternate indices to form a cycle $(p_2\,p_4\,\dots\,p_{n_i})$ of length $n_i/2$, and we denote by $w_{ev,0}$ the product of these cycles.  Let $I^+_{\ev,0}$ denote the collection of all indices appearing in the chosen $n_i/2$ cycles.  The element $w_{ev,0}$ lies in the group $W(C_{I^+_{\ev,0}})$, and we define a map from $T_\ev$ to the standard maximal torus $T_{\ev,0}$ in $\Sp_{I^+_{\ev,0}}(n)$ as follows.  For each cycle  $(p_1\,p_2\,\dots\,p_{n_i})$ of $w_\ev$, we take the coordinates $(\alpha_{p_1},\dots,\alpha_{p_{n_i}})$ of a point in $T_\ev$, and define the $p_2,p_4,\dots,p_{n_i}$ coordinates of its image in $T_{\ev,0}$ to be the tuple $(\alpha_{p_1}\alpha_{p_2},\dots,\alpha_{p_{n_i-1}}\alpha_{p_{n_i}})$.

We claim the following:  The above map takes $A^{w_{\ev}}$ to $T_{\ev,0}^{w_{ev,0}}$.  Moreover $Z_{W(C_{I^+_{\ev}})}(w_{\ev})$ has the form $(N_1\times N_2)\rtimes Q$ where
\begin{itemize}
    \item $N_1$ is the group generated by the cycles of $w_\ev$;
    \item $N_2$ is the group generated by the $-1$ element on each cycle;
    \item $Q$ is the group of permutations of cycles of the same length.
\end{itemize}
This can be rewritten as $N_1\rtimes (N_2\rtimes Q)$, and the groups $N_2$, $Q$ are canonically identified with subgroups of the centraliser $Z_{W(C_{I^+_{\ev,0}})}(w_{\ev,0})$. We claim that the map $A^{w_{\ev}}/N_1$ to $T_{\ev,0}^{w_{ev,0}}$ is a bijection and moreover that this is $N_2\rtimes Q$ equivariant.

\subsection{The sectors} For the simply connected case, as discussed in Section \ref{B_n case}, for each $w$ the quotient of the fixed set $T^w$ by the centraliser yields $\prod_i(c_i+1)$ components, each of which is a product of simplices where the numbers $c_i$ are the multiplicities of the negative cycles of $w$.

For the adjoint form we consider the quotient of the fixed set $(T/\cZ)^w=T^w/\cZ \cup A^w/\cZ$ by the action of the centraliser.
We begin by considering the quotient of $T^w /\cZ$ by the centraliser $Z(w)$, which is the quotient of $T^w/Z(w)$ by the action of $\cZ$.  Recall from Section \ref{B_n case} that the components of $T^w/Z(w)$ are indexed by the values $\alpha_{\mbar_i,j}$, corresponding to the negative cycles, which may take the value $\pm1$, and that two such tuples of values give the same component in the quotient if they have the same number of $-1$ values amongst $\alpha_{\mbar_i,1},\dots,\alpha_{\mbar_i,c_i}$ for each $i$.

The action of $\cZ$ has the effect of exchanging the signs of each $\alpha_{\mbar_i,j}$ and hence changes the number $a_i$ of $-1$ values to $c_i-a_i$. Hence this action identifies components in pairs except possibly for a component having $a_i=c_i/2$ for each $i$.  This component exists precisely when all $c_i$ values are even.

Hence if any $c_i$ is odd, the quotient $(T^w/\cZ)/Z(w)$ has exactly \mbox{$\frac 12 C$} components, where, as in Section \ref{B_n sectors}, we set $C=\prod\limits_{i=1}^k(c_i+1)$.  Each of these coincides with a component of $T^w/Z(w)$ and hence is a product of simplices. If each $c_i$ is even then $C-1$ components in $T^w/Z(w)$ are identified in pairs while the remaining component is acted upon by $\cZ$. Thus in this case the total number of components is $\lceil \frac 12 C\rceil$.

The unpaired component in  $T^w/Z(w)$ is a product of simplices, with coordinates $x_{n_i,j}$ satisfying
\begin{itemize}
    \item $0\leq x_{n_i,j}\leq \frac 12$ for all $i,j$
    \item $x_{n_i,1}\leq \dots \leq x_{n_i,d_i}$ for each $i$.
\end{itemize}
The $-1$ element acts on these Lie-algebra coordinates as translation by $(\frac12,\dots,\frac 12)$. Subtracting this vector yields coordinates in $[-\frac 12,0]$ and acting with the centraliser to negate these gives values $\frac 12-x_{n_i,j}$ which lie in $[0,\frac 12]$ as required.  

This has the effect of reversing the second set of inequalities:
\[
\frac 12-x_{n_i,1}\geq \dots \geq \frac 12-x_{n_i,d_i}
\]
and we therefore reverse this list to return to the fundamental domain.  Hence the action of $-1$ on this set takes the coordinates $x_{n_i,1}, \dots, x_{n_i,d_i}$ to $\frac 12-x_{n_i,d_i},\dots,\frac 12-x_{n_i,1}$, giving an involution on the product of simplices.

Applying the results from the Section \ref{involutions} we see that this quotient is homeomorphic to $V(q+r-k,q)$ where $q=\sum_i \lceil d_i/2\rceil$ and $r$ is the number of $i$ such that $d_i$ is even, giving $q+r-k=\sum_i \lfloor d_i/2\rfloor = \delta-q$ where $\delta=\sum_i d_i$.

When there is at least one odd positive cycle, i.e.\ when $I^+_{\odd}$ is non-empty, this is the whole quotient since $A^w$ is empty.  It remains to identify the quotient of $A^w$ by the actions of $\cZ$ and $Z(w)$ in the case where $I^+_{\odd}$ is empty.

We begin by observing that as the coordinates of points in $A^w$ alternate over each (positive and negative) cycle, the effect of acting with the cycle, which lies in the centraliser, is to negate these coordinates.  We can do this for each $(n_i,j)$ and each $(\mbar_i,j)$ independently. The action of the element $w$ itself is to negate all coordinates, so the quotient of  $A^w$ by $\cZ$ and $Z(w)$ is given just by the quotient $A^w/Z(w)$.

Since the coordinates on negative cycles parametrise components, the action of the negative cycles is to identify all the components of $A^w$. The positive cycles act on the single remaining component, independently rotating each coordinate of the torus, to produce a new torus which has half the length in each direction. It remains to consider the actions of the factors $\{\pm 1\}\wr_{d_i} S_{d_i}$ of the centraliser. These act on the smaller torus exactly as they did on the original one by permutations and inversion of coordinates.

Hence the quotient of $A^w$ is a single polysimplex: geometrically this is the same as the other polysimplex components but scaled by a factor of $\frac 12$. Hence there are a total of $\lfloor \frac 12 C\rfloor+1$ components which are products of simplices. Again when all  $c_i$ are even there is an additional component as described above.

\bigskip

\begin{table}[h]
  \[
\begin{array}{c|c}
 C:=\prod_{i=1}^k (c_i+1) & \\
 q:=\sum_i \lceil d_i/2\rceil & \\
  \hline
 I^+_\odd\text{ empty} &  \left(\left\lfloor\frac 12C \right\rfloor+1\right)\times V(\delta,0)\,\sqcup\, (C\text{ mod } 2)\times V(\delta-q,q)\\
  &\\
I^+_\odd\text{ non-empty}   &  \left\lfloor\frac 12C \right\rfloor\times V(\delta,0)\,\sqcup\, (C\text{ mod } 2)\times V(\delta-q,q)\\
\end{array}
\]

    \bigskip
    
    \caption{\label{adjoint_C_n_sectors}The structure of a sector for the adjoint form of $C_n$}

\end{table}

\section{Centralisers, Fixed Sets and Quotients in the Weyl Group $D_{n}$}\label{D_n case}

\subsection{Centralisers in the Weyl group of type $D_n$}

Let $\t\cong \R^n$ be a Cartan subalgebra in the Lie algebra of type $D_n$. The simple roots of $D_n$ can be represented as
\[
r_1=(1,-1,0,\dots,0),\dots r_{n-1}=(0,\dots,0,1,-1), r_n=(-1,-1,0,\dots ,0)\in \t^*,
\]
where we identify $\t^*$ with $\t$ using the inner product.  As $D_n$ is simply-laced the coroots are identified with the roots.

Note that the roots of $D_n$ are exactly the long roots of $B_n$, and correspondingly the short roots of $C_n$. This identifies the Weyl group of $D_n$ as a subgroup of $W(B_n)=W(C_n)$.  The elements of $W(B_n)$ act as signed permutations on the coordinate basis, and therefore can be regarded as elements of the permutation group $S_{2n}$.  The group $W(D_n)$ is the index $2$ subgroup obtained as the intersection of $W(B_n)$ with the alternating group: by definition, all positive cycles and their compositions are elements of the alternating group since a positive $k$-cycle is the composition of $2$ $k$-cycles, while negative cycles are even length cycles of $S_{2n}$. The condition that an element lies in the alternating group thus translates as the requirement that it has an even number of negative cycle factors.

If two elements of $W(D_n)$ are conjugate then they must also be conjugate in $W(B_n)$; hence conjugate elements have the same cycle type (as described earlier, cf. Section \ref{signed permutations}). Since the centraliser of an element must either be the same as in $W(B_n)$ or have index $2$ it follows that each $W(B_n)$ conjugacy class consists of exactly one or two conjugacy classes of $W(D_n)$. The latter, which we will call the \emph{split case}, occurs exactly when the cycle type consists only of positive even length cycles (which in particular can occur only if $n$ is even), see \cite{carter}. To distinguish these classes we now select suitable conjugacy class representatives. For the first class we can choose any special positive element (in the sense of Section \ref{signed permutations}). For simplicity of notation we will choose an element, which we denote $\wplus$, each of whose positive cycles has the form $(p+1\; p+2\; \dots \;p+n_{i})({-(p+1)}\; {-(p+2)}\; \dots \;{-(p+n_{i})})$ for some $p$.

For the other conjugacy class we can take the conjugate of $\wplus$ by any element of $W(B_n)$ which is not in $W(D_n)$. In the case that $n/2$ is odd, there is a particularly nice choice, namely we take $\wminus=(\wplus)^{\sigma_{\odd}}$ where $\sigma_{\odd}=\prod_{p\text{ odd}}\sigma_p$. Our above convention that $\wplus$ is a product of contiguous cycles means that this $\wminus$ is the product of $\wplus$ with the central element $\prod_p\sigma_p$ of the Weyl group.

In the case that $n/2$ is even, the element $\sigma_\odd$ lies in $W(D_n)$ so we cannot use this to obtain a new conjugacy class.  In this case we shall simply take $\wminus=(\wplus)^{\sigma_1}$.

\subsection{Decomposing the centraliser}
\label{centraliser structure}
To analyse the sectors of type $D_n$ we will need to divide into various cases, corresponding to the decomposition of the centraliser that these induce.  There will be a total of four cases, depending on whether or not the cycle type has odd positive cycles (type I/II) and/or has negative cycles (type A/B). The cases IA and IIA (no negative cycles) are of particular interest.  Type IA occurs only when $n$ is even and here we see divergence between the geometry of the sectors for the $\SSpin^+$ and $\SSpin^-$ groups. Type IIA on the other hand is the case where we obtain the most interesting geometry from the sectors, including spheres, projective spaces and suspensions of projective spaces.

Given an element $w\in W(D_n)$, using the notation from Section \ref{B_n case} we split the indices $\{1,\dots,n\}$ into the sets
\begin{itemize}
    \item $I^+_{\ev}=\bigcup\limits_{n_i \text{ even}}\bigcup\limits_j I_{n_i,j}$
    
    \item $I^+_{\odd}=\bigcup\limits_{n_i \text{ odd}}\bigcup\limits_jI_{n_i,j}$

    \item $I^-=\bigcup\limits_{m_i}\bigcup\limits_j I_{\mbar_i,j}$.
\end{itemize}

For $X\subseteq \{1,\dots, n\}$, we let $W(D_X)$ denote the copy of $W(D_{|X|})$ in $W(D_n)$ corresponding to this subset, using the usual identification of these groups as signed permutations. As before we write $W(B_X)$ for the corresponding subgroup of $W(B_n)$.

The splitting of $\{1,\dots, n\}$ yields an inclusion
\[
W(D_{I^+_{\ev}})\times W(D_{I^+_{\odd}})\times W(D_{I^-}) \hookrightarrow W(D_n)
\]
with $w$ the image of an element on the left-hand side which we denote by $(w_{\ev},w_{\odd},w_\neg)$.

The image of $Z_{W(D_{I^+_{\ev}})}(w_{\ev})\times Z_{W(D_{I^+_{\odd}})}(w_{\odd})\times Z_{W(D_{I^-})}(w_\neg)$ lies in the centraliser of $w$, but in general as we will see this may be an index $2$ subgroup of the centraliser.
We can fix this by using the decomposition $I^+_{\ev}\cup J$ where $J=I^+_{\odd}\cup I^-$ and corresponding inclusion
\[
W(D_{I^+_{\ev}})\times W(D_{J}) \hookrightarrow W(D_n).
\]
Writing $(w_{\ev},w_{J})$ for the corresponding decomposition of $w$, the centraliser of $w$ splits as $Z_{W(D_{I^+_{\ev}})}(w_{\ev})\times Z_{W(D_{J})}(w_{J})$.  Indeed the factorisation works in the $B_n$ case and by construction $w_{\ev}$ lies in a split conjugacy class of $W(D_{I^+_{\ev}})$, whence the first centraliser factor is $Z_{W(B_{I^+_{\ev}})}(w_{\ev})$.

The first factor of the centraliser is a product of factors of the form
\[
(\langle \sigma_{I_{n_i,j}}\rangle \times (\Z/n_i\Z))\wr_{d_i}S_{d_i} \quad\text{for each even positive cycle}\\
\]

The second factor $Z_{W(D_{J})}(w_{J})$ is an index $2$ subgroup of $Z_{W(B_{J})}(w_{J})$ where the latter is the product of factors
\[
\begin{cases}
(\langle \sigma_{I_{n_i,j}}\rangle \times (\Z/n_i\Z))\wr_{d_i}S_{d_i} & \text{for each (odd) positive cycle}\\
(\Z/2m_i\Z)\wr_{c_i}S_{c_i}& \text{for each negative cycle.}
\end{cases}
\]
The index $2$ part required is given by elements having an even number of $\sigma_{I_{n_i,j}}$ and negative cycle terms.

We will divide the conjugacy classes into four types:
\begin{itemize}
    \item IA: All $n_i$ even, no negative cycles (the split case), i.e.\  $I^+_{\odd},I^-$ are empty. In this case the second factor of the above factorisation is trivial.
    
    \item IB: Negative cycles (at least two) with only even positives, i.e.\  $I^+_{\odd}$ is empty and $I^-$ non-empty. In this case the second factor of the centraliser is $Z_{W(D_{I^-})}(w_\neg)$.
    
    \item IIB: Negative cycles (at least two) with some odd positive, i.e.\  $I^+_{\odd},I^-$ are both non-empty.

    \item IIA: Some $n_i$ odd, no negative cycles, i.e.\ $I^+_{\odd}$ is non-empty and $I^-$ is empty. In this case the second factor of the centraliser is $Z_{W(D_{I^+_{\odd}})}(w_{\odd})$.

\end{itemize}

    In type IIB the second factor of the centraliser $Z_{W(D_{J})}(w_{J})$ contains the product
    \[
    Z_{W(D_{I^+_{\odd}})}(w_{\odd})\times Z_{W(D_{I^-})}(w_\neg)
    \]
    which has index $2$ in $Z_{W(D_{J})}(w_{J})$ since it has index $4$ in
    \[
    Z_{W(B_{I^+_{\odd}})}(w_{\odd})\times Z_{W(B_{I^-})}(w_\neg)\cong Z_{W(B_{J})}(w_{J}).
    \]

\subsection{Nodal lattices for groups of type $D_n$}

The nodal lattice for a maximal torus in the simply connected $D_n$ group (i.e.\ $\mathrm{Spin}(2n)$) is the even parity lattice in $\Z^n$.  Dual to this, the nodal lattice in the adjoint case (i.e.\ $\mathrm{PSO}(2n)$) is the integer lattice extended by the element $(\frac 12,\dots,\frac 12)$.

The group $D_n$ has connection index $4$ with the centre of the Lie group  $\mathrm{Spin}(2n)$ given by $\Z/4\Z$ when $n$ is odd, and by $\Z/2\Z\times \Z/2\Z$ when $n$ is even. In the former case the unique index $2$ subgroup of the centre gives $\mathrm{SO}(2n)$ as the quotient, with nodal lattice $\Z^n$. In the latter case there are three possible quotients. Explicitly the centre is given by $\{\pm 1, \pm \epsilon_1\epsilon_2\dots \epsilon_{2n}\}$ where $\epsilon_1,\dots \epsilon_{2n}$ are orthonormal basis vectors in $\R^{2n}$ viewed as elements of the Clifford algebra.  The quotient by $-1$ gives $\SO(2n)$, with corresponding nodal lattice $\Z^n$, which is self-dual.

The other two quotients have nodal lattice given by adding either $\langle (\frac 12, \dots ,\frac 12)\rangle$ or $\langle (-\frac 12, \dots ,\frac 12)\rangle$ to the even integer lattice. In the special case that $n=4$ these lattices are isometric to the standard integer lattice, and moreover the isometry preserves the set of roots/coroots. This however is unique to $n=4$. In general when $4|n$ these lattices are also self-dual, while for $n\equiv 2 \mod 4$ the lattices are dual to one another. Note however that these lattices are always isometric to one another, with isometry preserving the set of roots/coroots. The Lie group obtained as the quotient $\Spin(2n) / \langle \epsilon_1 \dots\epsilon_{2n}\rangle$ is called the semi-spin group $\SSpin^+(2n)$ while the isomorphic quotient by $\langle -\epsilon_1 \dots\epsilon_{2n}\rangle$ is denoted  $\SSpin^-(2n)$. We will denote by $E^\pm$ the elements $\pm\epsilon_1 \dots\epsilon_{2n}\in \Spin(2n)$.

Note that the lattice for the simply connected $D_n$ is isometric to that for the simply connected $B_n$ with the isomorphism identifying the roots of $D_n$ with the long roots of $B_n$, while the lattice for the adjoint $D_n$ is isometric to the lattice for the adjoint $C_n$, now identifying roots of $D_n$ with the short roots of $C_n$. This isometry is therefore equivariant with respect to the Weyl group $W(D_n)$ viewed as a subgroup of $W(B_n)=W(C_n)$.

The lattice for the $\SO(2n)$ case is isometric to the adjoint $B_n$ and the simply connected $C_n$ cases, and again this identification is $W(D_n)$-equivariant. When $n$ is even the semi-spin lattices have no counterpart in the $B_n$/$C_n$ cases. Indeed they are not preserved by the $W(B_n)=W(C_n)$ Weyl group, with elements of $W(B_n)$ which do not lie in $W(D_n)$ interchanging the two semi-spin lattices.

\subsection{The maximal tori and the action of the Weyl group}
The simply connected case is the Lie group $\Spin(2n)$.  For a subset $X$ of $\{1,2,\dots,n\}$ we denote by $\Spin_X(2n)$ the subgroup of $\Spin(2n)$ which is the image of $\Spin(2|X|)$ in $\Spin(2n)$ for the map induced by the inclusion of $\mathrm{span}\{\epsilon_{2j-1},\epsilon_{2j} : j \in X\}$ into $\R^{2n}$.

The maximal torus can be chosen to be the internal product in $\Spin(2n)$ of the subgroups $\Spin_{\{1\}}(2n),\dots,\Spin_{\{n\}}(2n)$. Note that each of these subgroups contains the $-1$ element of $\Spin(2n)$ in its image, so the maximal torus is a $\Z/2^{n-1}$ quotient of $\Spin(2)^n$. This torus is $W(D_n)$-equivariantly identified with the maximal torus $\coveringtorus$ in the simply connected $B_n$ case, indeed the group $\Spin(2n)$ is a subgroup of $\Spin(2n+1)$ and the maximal torus described above also gives a maximal torus of the latter.

For the Lie group $\SO(2n)$, the maximal torus is $\SO(2)^n$ which is
$W(D_n)$-equi\-variantly identified with the maximal torus $T$ in the adjoint $B_n$ case. Indeed the group $\SO(2n)$ is a subgroup of $\SO(2n+1)$ and the subgroup $\SO(2)^n$ is a maximal torus of the latter. We introduce the notation $\SO_X(2n)$ for the image of $\SO(2|X|)$ corresponding to the subspace $\mathrm{span}\{\epsilon_{2j-1},\epsilon_{2j} : j \in X\}$ of $\R^{2n}$.

We recall that $T$ is naturally identified with $\TT^n$, whose coordinates we denote $(\alpha_1,\dots, \alpha_n)$, while $\coveringtorus$ can be identified with the subtorus of $\TT^{n+1}$ of points $(\alpha_1,\dots, \alpha_n,\beta)$ such that $\beta^2\prod_i\alpha_i=1$.

\bigskip

In the semi-spin cases $\SSpin^\pm(2n)$ the maximal tori are given by the quotients $\coveringtorus / \langle E^\pm\rangle$ respectively.

The adjoint case is the Lie group $\PSO(2n)$ whose maximal torus is the quotient $\coveringtorus/\ZDSpin=T/\ZDSO$. We note that the identification of $T$ with the maximal torus in the simply connected $C_n$ group is equivariant with respect to the identifications of $\ZDSO$ and $\ZCSp$.

\bigskip

The above discussion shows that  the maximal tori in the classical cases are equivariantly isomorphic with counterparts in $B_n$ or $C_n$ groups. Hence for $w\in W(D_n)$ the $w$-fixed set in one of the classical $D_n$ maximal tori agrees with the $w$-fixed set in the corresponding $B_n$ or $C_n$ torus. The  discussion of the centralisers shows that, in the split case
the corresponding quotients of the fixed set by the centralisers are isomorphic.  On the other hand when the conjugacy classes do not split, the quotient of the fixed set in one of these cases by the $W(D_n)$ centraliser is  a $\Z/2\Z$ (orbifold) cover of the quotient by the $W(B_n)$ centraliser. 

\bigskip

To relate the semi-spin cases to these $B_n$ and $C_n$ cases we make the following observations.  As the groups $\SSpin^\pm(2n)$ are isomorphic as Lie groups we will consider in detail the case of $\SSpin^+(2n)$, whose maximal torus is $\coveringtorus / \langle E^+\rangle$. To examine the $w$-fixed set in this torus we consider its preimage in $\coveringtorus$.  This consists of points which are fixed by $w$ in $\coveringtorus$, along with those points $\tau$ for which $w\cdot \tau =E^+\tau$ which we denote by $\Atilde^w$. The fixed set $(\coveringtorus / \langle E^+\rangle)^w$ is the image in $\coveringtorus/\langle E^+\rangle$ of $\coveringtorus^w\cup \Atilde^w$.

To identify the element $E^+$ in the coordinates of $\coveringtorus$ we note that this element is the exponential of the vector $(\frac 12,\dots \frac 12,-\frac{n}2)$ in the Lie algebra.  Here we are using the coordinates of $\R^{n+1}_0$ for points in the Lie algebra. Hence in coordinates $E^+$ is identified as $(-1,\dots,-1,(-1)^{n/2})$: recall that the exponential map takes coordinates $(x_1,\dots,x_n,y)$ to $(e^{2\pi i x_1},\dots,e^{2\pi i x_n},e^{1\pi i y})$. Similarly in coordinates $E^-$ is identified as $(-1,\dots,-1,(-1)^{n/2+1})$.

Note that $\coveringtorus^w$ is exactly as in the simply connected $B_n$ case, just with $w$ restricted to the subgroup $W(D_n)$. In the case where there are any negative cycles, the corresponding coordinates parametrise components of $\coveringtorus^w$ and hence, since these coordinates of $E^+$ are $-1$, the element $E$ is not in the identity component. Thus $E^+$ acts on $\coveringtorus^w$ to identify components in pairs. On the other hand in the case with odd positive cycles and no negative cycles the set $\coveringtorus^w$ has only one component (see Section \ref{B_n fixed sets}) hence in these cases the quotient by $E^+$ reduces the volume of the component. In the split case $\coveringtorus^w$ has two components and we will see that for $w=\wplus$ the element $E^+$ lies in the identity component while for $w=\wminus$ the action of $E^+$ instead identifies the two components.

Let $\widetilde{A^w}$ denote the preimage in $\coveringtorus$ of the subset $A^w$ in $T$: recall from the adjoint $C_n$ case that $A^w$ is defined as the set of points in $T$ which are negated by $w$.  The set $\Atilde^w$ is a subset of $\widetilde{A^w}$, indeed a point of $\widetilde{A^w}$ lies in $\Atilde^w$ if and only if the $\beta$ coordinate is multiplied by $(-1)^{n/2}$ by the action of $w$.

We recall that when $w$ has any odd positive cycles $A^w$ is empty, hence $\Atilde^w$ is also empty. If all $n_i$ are even then $A^w$ consists of points whose coordinates within each factor corresponding to $I_{n_i,j}$ or $I_{\mbar_i,j}$ are alternating $(\alpha,-\alpha,\dots)$, and with the additional constraint that within the $I_{\mbar_i,j}$ factors the values are either $\{\pm1\}$ or $\{\pm \sqrt{-1}\}$ depending on the parity of $m_i$. With the exception of the conjugacy class of $\wminus$ in the split case, the positive cycles may be taken to be special positive elements in which case these cycles do not change $\beta$.  On the other hand the negative cycles multiply $\beta$ by $\pm1$ or by $\pm\sqrt{-1}$.  Since $n$ is even and by assumption there are no odd positive cycles, there must be an even number of odd negative cycles, and thus the effect of $w$ on $\beta$ is multiplication by $\pm1$. Hence as long as the set $I^-$ is non-empty the values of $\pm1$ or $\pm\sqrt{-1}$ can be chosen such that the required action on $\beta$ of multiplication by $(-1)^{n/2}$ is achieved. Therefore $\Atilde^w$ is exactly the preimage of half of the components of $A^w$. In taking the preimage the number of components is doubled (since changing the sign of $\beta$ gives a different component as all positive cycles are even), and hence $\Atilde^w$ has exactly the same number of components as $A^w$ in the non-split case.

Note that by construction the action of $w$ on $\Atilde^w$ is the same as the action of $E$, hence as $w$ lies in its own centraliser, the quotient of $\Atilde^w/\langle E^+\rangle /Z_{W(D_n)}(w)$ is the same as $\Atilde^w/Z_{W(D_n)}(w)$

\subsection{Type IA: The split cases}\label{section: split} In the split case the element $w$ has cycle type consisting only of even positive cycles. Note that the value of $n$ must therefore be even for split cases to exist.

Since $\wplus,\wminus$ are conjugate in $W(B_n)$ but not in $W(D_n)$ it follows that except in the semi-spin case the fixed sets and quotients occurring for $\wplus$ and $\wminus$ will agree.  Indeed, since here the centralisers are the same as in $W(B_n)$ and $W(C_n)$, we have already computed the quotients of the fixed sets by the centralisers in the classical cases: for the simply connected case $\Spin(2n)$ this is given by the corresponding quotient in the simply connected $B_n$ case; for $\SO(2n)$ it is given by the adjoint $B_n$ or the isomorphic dual simply connected $C_n$ case; for the adjoint case $\PSO(2n)$ we use the adjoint $C_n$ calculation.  Note that the total contribution to the extended quotient from these cycle types in each $D_n$ case is twice that from the corresponding $B_n/C_n$ case since these cycle types correspond to \emph{pairs} of conjugacy classes in $W(D_n)$.

\bigskip

Within the split case it remains to consider the semi-spin groups, which must exist in this case since $n$ is even.

The fixed set $\coveringtorus^{\wplus}$ consists of points where the values of $\alpha_p$ are constant over the coordinates $p\in I_{n_i,j}$ for each $i,j$ and satisfying
\[
\left(\beta\prod_{i,j} \alpha_{n_i,j}^{n_i/2}\right)^2=1.
\]
This has two components indexed by the choice of square root. Recall that in coordinates we have
\[
E^+=(-1,\dots,-1,(-1)^{n/2})
\]
and that $n=\sum_{i,j} n_i$. The identity component $\coveringtorus^{\wplus}_1$ is given by the equation $\beta\prod_{i,j} \alpha_{n_i,j}^{n_i/2}=1$ which is satisfied by the coordinates of $E$. Hence $E$ preserves each of the two components and, since it acts freely, the quotient $\coveringtorus^{\wplus}/\langle E^+\rangle$ consists of two copies of the torus $\coveringtorus^{\wplus}_1/\langle E^+\rangle$.  Indeed since the value of $\beta$ is a function of the $\alpha$ coordinates in $\coveringtorus^{\wplus}_1$ these components can be identified with $T^{\wplus}/\{\pm1\}$ which is the fixed set from the adjoint $C_n$ case.  This identification is equivariant with respect to the group $Z_{W(D_n)}(\wplus)=Z_{W(C_n)}(\wplus)$, so the quotient is as in this $C_n$ case.

\bigskip

For the double primed case we begin by considering the case when $n/2$ is even, and $\wminus=(\wplus)^{\sigma_1}$. The coordinates for the first cycle of the fixed set take the form $(\alpha_{n_1,1},\alpha_{n_1,1}^{-1},\dots,\alpha_{n_1,1}^{-1})$, while the other cycles satisfy the same conditions as before. The equation defining the $\beta$ for points in $\coveringtorus^{\wminus}$ becomes 
\[
\left(\beta(\alpha_{n_1,1})^{1-n_1}\prod_{i,j} \alpha_{n_i,j}^{n_i/2}\right)^2=1.
\]
Now the identity component $\coveringtorus^{\wminus}_1$ has equation $\beta(\alpha_{n_1,1})^{1-n_1}\prod_{i,j} \alpha_{n_i,j}^{n_i/2}=1$ which is false for the coordinates of $E$ since $(-1)^{1-n_1}=-1$, so $E$ acts to identify the two components and the quotient $\coveringtorus^{\wminus}/\langle E^+\rangle$ is the single torus $\coveringtorus^{\wminus}_1$, which may be equivariantly identified with  $T^{\wminus}$. From Section \ref{C_n case}, the quotient of $T^{\wminus}$ by the $W(C_n)$ centraliser (which is also the $W(D_n)$ centraliser) is a polysimplex.

When $n/2$ is odd $\wminus=(\wplus)^{\sigma_\odd}=\wplus\prod_p\sigma_p$ and so the coordinates for \emph{each} cycle of the fixed set are given alternately by $\alpha_{n_i,j}$ and $\alpha_{n_i,j}^{-1}$. Note that since $\prod_p\sigma_p$ is central, the centraliser of $\wminus$ is the same as that for $\wplus$.  Since the product of all $\alpha$ coordinates is $1$, the defining equation for $\beta$ becomes $\beta^2=1$.  Since $n/2$ is odd, all coordinates of $E$ are $-1$ hence the two components are identified in $\coveringtorus^{\wminus}/\langle E^+\rangle$ and this quotient can therefore be $Z_{W(D_n)}(\wminus)$-equivariantly identified with $T^{\wminus}$.  Hence the quotient is again a polysimplex.

\bigskip

The set $\Atilde^{\wplus}$ will equal $\widetilde{A^{\wplus}}$ when $n/2$ is even, and will be empty when $n/2$ is odd, since $\wplus$ acts trivially on the $\beta$ coordinate. On the other hand for $n/2$ even we have $\wminus=(\wplus)^{\sigma_1}$ and $\widetilde{A^{\wminus}}$ consists of points having the form $(\alpha_{n_1,1},-\alpha_{n_1,1}^{-1},\alpha_{n_1,1}^{-1},-\alpha_{n_1,1}^{-1},\dots)$ on the first cycle. The element $\wminus$ will multiply $\beta$ by the first and last coordinates of this cycle, that is by $\alpha_{n_1,1}(-\alpha_{n_1,1}^{-1})=-1$. Since the $\beta$-coordinate of $E$ is $1$ in this case $\Atilde^{\wminus}$ is empty.  When $n/2$ is odd we have $\wminus=(\wplus)^{\sigma_\odd}=\wplus\prod_p\sigma_p$. Points of the fixed set now alternate $\alpha_{n_i,j},-\alpha_{n_i,j}^{-1}$ and thus the product of coordinates gives $(-1)^{n/2}=-1$, hence $\Atilde^{\wminus}=\widetilde{A^{\wminus}}$ when $n/2$ is odd.  We remark that whether or not $n/2$ is even, the element $\sigma_\odd$ takes $\widetilde{A^{\wplus}}$ to $\widetilde{A^{(\wplus)^{\sigma_\odd}}}$.

We now consider the action of the centraliser on the set $\widetilde{A^{\wplus}}$, noting that when $n/2$ is even this gives the quotient of $\Atilde^{\wplus}$ as required. For simplicity of notation we will assume that each positive cycle of $\wplus$ has the form $(p+1\; p+2\; \dots \;p+n_{i})(-(p+1)\; -(p+2)\; \dots \;-(p+n_{i}))$ for some $p$. The  values of $p$ will all be even as each cycle is of even length.

Let $u$ be the product $(1\;2)(-1\;-2)\dots(n-1\;n)(-(n-1)\;-n)$. We note that since points of $\Atilde^{\wplus}$ must have alternating coordinates within cycles, that in particular  the $\alpha$ coordinates are all negated by $u$, that is $\widetilde{A^{\wplus}}\subseteq\widetilde{A^{u}}$. Indeed $\widetilde{A^{\wplus}}=\widetilde{A^{u}} \cap \coveringtorus^{(\wplus)^2}$.

We define a map $s$ from this `universal' $\widetilde{A^u}$ to the standard maximal torus of the group $\Spin(n)$ (which is a group of type $D_{n/2}$) which we denote by $\TSpinn$. We view the coordinates in $\TSpinn$ as being indexed by pairs $(1,2),(3,4)$ etc.\ and we define the map by
\[
s:(-\alpha_2,\alpha_2,-\alpha_4,\alpha_4,\dots,\beta) \mapsto (-\alpha_2^2,-\alpha_4^2,\dots,\beta).
\]
We think of the element $(\wplus)^2$ as acting on the set of pairs $(1,2),(3,4),\dots$ and hence giving a permutation action on the coordinates of $\TSpinn$, and as usual acting trivially on the $\beta$ coordinate.

The centraliser of $\wplus$ is generated by elements of three types: we have cycles from $\wplus$; the involution $\prod_{p\in I_{n_i,j}} \sigma_p$ for each $i,j$; and the permutation groups $S_{d_i}$ for each $i$.  We write the centraliser as $(N_1\times N_2)\rtimes Q$ where $N_1$ is generated by the cycles, $N_2$ by the involutions and $Q$ is the product of the permutation groups.

We now consider the centraliser of $(\wplus)^2$ viewed as above as an element of $W(B_{n/2})$. This again has the above form and we denote the corresponding subgroups by $\overline{N}_1,\overline{N}_2$ and $\overline{Q}$. The generators of $\overline{N}_1$ are cycles of length $n_i/2$.  The generators of $\overline{N}_2$ are products of $n_i/2$ negative $1$-cycles: these are not necessarily elements of $W(D_{n/2})$ depending on the parity of $n_i/2$, however they are in the larger group $W(B_{n/2})$.  These are canonically in bijection with the products of $n_i$ negative cycles generating $N_2$ and hence the groups $N_2$ and $\overline{N}_2$ are identified with one another.  Likewise $\overline{Q}$ is identified with $Q$.  We remark that the elements of $\overline{N}_1$ all act trivially on $\TSpinn^{(\wplus)^2}$\!\!.

\begin{lemma}\label{lemma1}
The map $s$ induces a homeomorphism $\widetilde{A^{\wplus}}/N_1 \cong \TSpinn^{(\wplus)^2}$\!\!. 
Moreover this identification is equivariant with respect to the actions of $N_2\rtimes Q$. Hence there is an identification 
\[
\widetilde{A^{\wplus}}/Z_{W(D_n)}(\wplus) \cong \TSpinn
    ^{(\wplus)^2}/Z_{W(B_{n/2})}((\wplus)^2).
    \]
\end{lemma}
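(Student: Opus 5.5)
We describe $\widetilde{A^{\wplus}}$ explicitly in coordinates, check that $s$ maps it into $\TSpinn^{(\wplus)^2}$ and is constant on $N_1$-orbits, and then show that the induced map is a continuous bijection of compact Hausdorff spaces, hence a homeomorphism. By the discussion above, a point of $\widetilde{A^{\wplus}}$ has, on each cycle $I_{n_i,j}$ (with indices $p+1,\dots,p+n_i$, $p$ even), coordinates alternating $(-\alpha,\alpha,-\alpha,\dots,\alpha)$, together with a $\beta$ satisfying $\beta^2\prod\alpha_q=1$. Since each cycle has even length and the signs alternate, the product of the coordinates over a cycle is $\pm\alpha^{n_i}$, and this equals $(-\alpha^2)^{n_i/2}$. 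Hence $s$ lands in $\TSpinn$, with the $\beta$ constraint preserved exactly, because the defining equation of $\TSpinn$ is $\beta^2\prod_{k}(-\alpha_{2k}^2)=1$. On the image, the coordinates indexed by the pairs in a given cycle are all equal to $-\alpha^2$. These are precisely the points fixed by $(\wplus)^2$, which acts as a cyclic permutation of length $n_i/2$ on those pairs and trivially on $\beta$. So $s(\widetilde{A^{\wplus}})\subseteq\TSpinn^{(\wplus)^2}$.

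Next we compute the $N_1$-action. The generator of $N_1$ given by the $(i,j)$ cycle rotates that block by one step. On an alternating block this negates the common value $\alpha\mapsto-\alpha$ and leaves $\beta$ unchanged, since a special positive cycle does not act on $\beta$. Thus $N_1$ acts through $\prod_{i,j}\Z/2$, by independent sign changes of the block parameters. Since $s$ only records $-\alpha^2$ on each block, $s$ is $N_1$-invariant.

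For the bijection we argue as follows. Given a target point, we take for each block a common value $\gamma$ together with $\beta$. Choosing $\alpha$ with $-\alpha^2=\gamma$ gives exactly two preimages, differing by sign, and they form a single $N_1$-orbit; $\beta$ is carried along unchanged. This proves injectivity and surjectivity on $\widetilde{A^{\wplus}}/N_1$. Continuity is clear, and both spaces are compact Hausdorff, so the induced map is a homeomorphism.

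For equivariance we check the generators. The generator of $N_2$ for a block negates the $\alpha$ coordinates of that block, sending $\alpha\mapsto\alpha^{-1}$ in the torus, and multiplies $\beta$ by the product of the block coordinates, namely $(-\alpha^2)^{n_i/2}$. The corresponding generator of $\overline{N}_2$, a product of $n_i/2$ negative $1$-cycles in $W(B_{n/2})$, sends $\gamma=-\alpha^2\mapsto\gamma^{-1}$ on those pairs. By the $B_n$ formula $\tilde\sigma_p$, it also multiplies $\beta$ by $\gamma^{n_i/2}$, so the two actions agree. The group $Q$ permutes whole blocks of equal length, and these correspond to permutations of the pair-blocks, so $Q$ and $\overline{Q}$ act compatibly. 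The $\overline{N}_1$ action is trivial on the fixed set, as remarked before the lemma. Since $N_1$ is normal in $Z(\wplus)=(N_1\times N_2)\rtimes Q$, quotienting in stages gives
\[
\widetilde{A^{\wplus}}/Z(\wplus)=\bigl(\widetilde{A^{\wplus}}/N_1\bigr)/(N_2\rtimes Q)\cong \TSpinn^{(\wplus)^2}/(\overline{N}_1\times\overline{N}_2)\rtimes\overline{Q}.
\]
The last group is $Z_{W(B_{n/2})}((\wplus)^2)$.

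The step I expect to be the main obstacle is the $\beta$-bookkeeping in the $N_2$ equivariance. Here one must make sure that the sign conventions in $s$, in particular the choice of $-\alpha_2^2$ rather than $\alpha_2^2$ together with the $e^{\pi i y}$ normalisation of $\beta$, make the factor $(-\alpha^2)^{n_i/2}$ match exactly, including in the case where $n_i/2$ is odd. A related subtlety is verifying that $\overline{N}_2$ genuinely acts on $\TSpinn$ even though it lies outside $W(D_{n/2})$. I would handle this by using the extended action of $W(B_{n/2})$ on $\coveringtorus$ through the formula for $\tilde\sigma_p$ from the $B_n$ section, with $\TSpinn$ identified with the simply connected $B_{n/2}$ torus.
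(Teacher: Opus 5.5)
Your proposal is correct and follows essentially the same route as the paper: $N_1$-invariance of $s$, bijectivity from the sign ambiguity of the block parameter, generator-by-generator equivariance for $N_2$ and $Q$ (with the $\beta$-multiplier $(-\alpha^2)^{n_i/2}$ agreeing on both sides), and then quotienting in stages using normality of $N_1$. You also make explicit two points the paper leaves implicit: the compact-Hausdorff argument that upgrades the continuous bijection to a homeomorphism, and the check that $s$ respects the defining equation of $\TSpinn$.
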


\begin{proof}
The generators of $N_1$ act to negate the coordinates $\alpha_2,\alpha_4,\dots$, hence they leave the values $\alpha_p^2$ unchanged so the map $s$ induces a map from $\widetilde{A^{\wplus}}/N_1$ to $\TSpinn^{(\wplus)^2}$.  Conversely given two points with the same image in $\TSpinn^{(\wplus)^2}$, the values of their $\alpha$ coordinates must agree up to sign, and their $\beta$ coordinates must agree, hence they differ by the action of an element of $N_1$.  This makes the map injective, and it is easy to see that it is also surjective.

The action of a generator of $N_2$ is to invert the corresponding $-\alpha_p$ and $\alpha_p$ coordinates, and to multiply $\beta$ by their product.  The action of the generator viewed as an element of $\overline{N}_2$ is to invert the corresponding values of $-\alpha_p^2$ and to multiply $\beta$ by the product of these coordinates.  Hence $s$ is $N_2$ equivariant.

Similarly the permutations in $Q$ act to permute values of $\alpha_p$ and in the same way on the values of $-\alpha_p$, while $\overline{Q}$ permutes values of $-\alpha_p^2$.  Again this is equivariant.

The result now follows as
\begin{align*}
\widetilde{A^{\wplus}}/Z_{W(D_n)}(\wplus) &= \widetilde{A^{\wplus}}/(N_1\times N_2)\rtimes Q\\
&=(\widetilde{A^{\wplus}}/N_1) /N_2\rtimes Q\\
&\cong \TSpinn
    ^{(\wplus)^2} / \overline{N}_2\rtimes \overline{Q}\\
    &= \TSpinn
    ^{(\wplus)^2} /Z_{W(B_{n/2})}((\wplus)^2).
\end{align*}
\end{proof}

When $n/2$ is odd we make a similar argument for $\widetilde{A^{\wminus}}$.  Let $u''=u^{\sigma_\odd}$. As before we note that $\widetilde{A^{\wminus}}=\widetilde{A^{u''}} \cap \coveringtorus^{(\wminus)^2}$.

We define a map $p$ from $\Atilde^{u''}$ to $\TSpinn$ by $p(\tau)=s(\sigma_\odd\tau)$, noting that $\sigma_\odd$ takes $\widetilde{A^{u''}}$ to $\widetilde{A^{u}}$.

The following result is now a consequence of the previous lemma since $\wminus=(\wplus)^{\sigma_\odd}$ has the same centraliser as $\wplus$.

\begin{lemma}
    The map $p$ induces a homeomorphism $\widetilde{A^{\wminus}}/N_1 \cong \TSpinn
    ^{(\wminus)^2}$.  Moreover this identification is equivariant with respect to the actions of $N_2\rtimes Q$.
    Hence there is an identification
    \[
    \widetilde{A^{\wminus}}/Z_{W(D_n)}(\wminus)\cong \TSpinn
    ^{(\wminus)^2}/Z_{W(B_{n/2})}((\wminus)^2).
    \]
\end{lemma}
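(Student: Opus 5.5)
The plan is to transport Lemma \ref{lemma1} along the element $\sigma_\odd\in W(B_n)$, which conjugates $\wminus$ back to $\wplus$. Since $n/2$ is odd, $\sigma_\odd\notin W(D_n)$, but this does not matter: it is a homeomorphism of $\coveringtorus$ that intertwines the relevant data. First I will check that $\sigma_\odd$ maps $\widetilde{A^{\wminus}}$ homeomorphically onto $\widetilde{A^{\wplus}}$. If $\tau\in\widetilde{A^{\wminus}}$, then the image of $\tau$ in $T$ is negated by $\wminus=\sigma_\odd\wplus\sigma_\odd$. Hence the image of $\sigma_\odd\tau$ is negated by $\wplus$, because negation of $T$ commutes with the whole Weyl group. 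Since $\sigma_\odd$ is an involution, this gives a bijection, and it is the remark made just before Lemma \ref{lemma1}. Equivalently, $\sigma_\odd$ takes $\widetilde{A^{u''}}\cap\coveringtorus^{(\wminus)^2}$ to $\widetilde{A^u}\cap\coveringtorus^{(\wplus)^2}$, using $(\wminus)^2=((\wplus)^2)^{\sigma_\odd}$.

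Next, $p=s\circ\sigma_\odd$ restricted to $\widetilde{A^{\wminus}}$ is the composite of this homeomorphism with $s$. By Lemma \ref{lemma1}, $s$ induces $\widetilde{A^{\wplus}}/N_1\cong\TSpinn^{(\wplus)^2}$. For the $N_1$-quotient, I use that $\wminus=\wplus\prod_p\sigma_p$ with $\prod_p\sigma_p$ central. So $Z(\wminus)=Z(\wplus)$, and conjugation by $\sigma_\odd$ preserves this centraliser, since it normalises $Z(\wplus)$ as the centraliser of the conjugate. What needs checking is that conjugation by $\sigma_\odd$ carries the subgroups $N_1$, $N_2$, $Q$ for $\wminus$ to the corresponding subgroups for $\wplus$. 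For $N_2$ this is clear, because it is generated by the $\sigma$-products, which commute with $\sigma_\odd$. For $N_1$ it holds because the cycles of $\wminus$ are the $\sigma_\odd$-conjugates of the cycles of $\wplus$. For $Q$, permutations of cycles of equal length conjugate to the corresponding block permutations of $\wplus$. These may differ by elements of $N_2$, since all the cycles start at an even offset $p$ and so $\sigma_\odd$ restricts identically to each block; in the worst case one works with the group $N_2\rtimes Q$ as a whole, which suffices. Then $\sigma_\odd(h\tau)=h^{\sigma_\odd}\sigma_\odd\tau$, so $p$ descends to $\widetilde{A^{\wminus}}/N_1\cong\TSpinn^{(\wplus)^2}$.

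The target needs care, because the lemma writes $\TSpinn^{(\wminus)^2}$. Viewing $(\wminus)^2$ as acting on the pairs $(1,2),(3,4),\dots$, it induces the same permutation as $(\wplus)^2$. This is because $(\wminus)^2=(\wplus)^2$ exactly, as $\prod_p\sigma_p$ is a central involution. So the two fixed sets coincide, and the first statement follows. For equivariance, I transport the $N_2\rtimes Q$-equivariance of $s$ from Lemma \ref{lemma1} through the conjugation identification above, using the identification of $N_2\rtimes Q$ with $\overline N_2\rtimes\overline Q$. The final identification then follows by the same chain of equalities as at the end of the proof of Lemma \ref{lemma1}, using $Z_{W(D_n)}(\wminus)=(N_1\times N_2)\rtimes Q$.

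The main obstacle is the bookkeeping in the equivariance step. One must confirm that conjugation by $\sigma_\odd$ induces the identity on the abstract group $N_2\rtimes Q$, or at least an automorphism compatible with the identification $N_2\rtimes Q\cong\overline N_2\rtimes\overline Q$. Otherwise the statement holds only up to twisting the action by an automorphism, though that twist would still give the same quotient.
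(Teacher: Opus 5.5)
Your proposal is correct and is essentially the paper's argument: the paper deduces this lemma in one line from Lemma \ref{lemma1}, by transporting along $\sigma_\odd$ and using $Z(\wminus)=Z(\wplus)$. You supply the missing details, and you correctly read $N_1$ here as generated by the cycles of $\wminus$ (the $\sigma_\odd$-conjugates of those of $\wplus$), and use $(\wminus)^2=(\wplus)^2$. Your closing worry, and your hedge about $Q$, are unnecessary: $\sigma_\odd$ commutes elementwise with $N_2$ (diagonal sign changes) and with $Q$ (block swaps between cycles starting at even offsets preserve the parity of indices), so conjugation by $\sigma_\odd$ is the identity on $N_2\rtimes Q$ and the equivariance of $p$ is inherited verbatim from that of $s$.
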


For the groups $\SSpin^-$ the roles of $\wplus,\wminus$ are switched.

Combining the calculations above with our computations in Sections \ref{involutions}, \ref{B_n sectors} and \ref{C_n case} we see that for cycle type $(n_1^{d_1}, \ldots, n_n^{d_k})$, in the semi-spin case the quotients are:

\bigskip

\begin{table}[h]
\[
\begin{array}{c|c|c|c|c}
 \multirow{2}{5em}{$\SSpin^{\pm}(2n)$}   & n/2& \multicolumn{2}{|c|}{w=\wplusminus} & w=\wminusplus\\
 &\text{mod } 2&4| \text{gcd}\{n_i\}&\text{gcd}\{n_i\}=2 \text{ mod } 4&\\
 \hline
{}^{\strut} (\coveringtorus^w/\langle E^\pm\rangle)/Z_W(w)  & 0,1 & 2 \times V(\delta-q,q)  & 2 \times V(\delta-q,q)  &V(\delta,0)   \\ [.5ex]
\hline
\multirow{2}{6em}{$\Atilde^w/Z_W(w)$}    & {}^{\strut}0&2 \times V(\delta,0) & V(\delta,0) &\emptyset \\ [.5ex]
\cline{2-5}
  &{}^{\strut}1& \text{n/a} & \emptyset  & V(\delta,0)
\end{array}
\]

\bigskip

\caption{The structure of the $\SSpin^\pm(2n)$ sector associated with a split cycle type.}
\end{table}
where $\delta=\sum_i d_i$ and $q=\sum\lceil d_i/2\rceil$.

\medskip

For comparison, in the three classical cases there is no dichotomy for the split conjugacy classes concerning the parity of $n/2$ and we obtain the following quotients:

\bigskip

\begin{table}[h]
\[
\begin{array}{c|c|c|c}
G & \Spin(2n)   &
 \SO(2n)   & 
\PSO(2n)    \\
 \hline
{}^{\strut} T(G)^w/Z_W(w)& 2 \times V(\delta,0) &  V(\delta,0)  &V(\delta-q,q)\\
\hline
{}^{\strut} A^w/Z_W(w) &\text{n/a} &\text{n/a}
& V(\delta,0)
\end{array}
\]

\bigskip

   \caption{The structure of the $D_n$ sectors for split cycle types in the classical cases}
   
\end{table}

\bigskip
\bigskip

\subsection{Type IB: Negative cycles (at least two) with only even positives}\label{D_n case 2}

As noted above the centraliser of $w$ in this case factorises as
    \[
    Z_{W(D_{I^+_{\ev}})}(w_{\ev})\times Z_{W(D_{I^-})}(w_\neg).
    \]
    
Beginning with the $\SO(2n)$ case, we have a corresponding equivariant factorisation of the fixed set as $T^w=T^{w_\ev}_{\ev}\times T^{w_\neg}_-$, and since $w_{\ev}$ is in a split conjugacy class, the first factor of the quotient is a product of simplices as above.

The second factor $T^{w_\neg}_-$ of the fixed set is a product $\prod_{i=1}^k\{\pm 1\}^{c_i}$. The group $Z_{W(D_{I^-})}(w_\neg)$ is generated by the even products of negative cycles appearing in $w_\neg$, which act trivially, along with permutations of factors corresponding to cycles of the same length. As a result the quotient of the second factor has $C:=\prod_{i=1}^k (c_i+1)$ points, where $c_i$ denotes the multiplicity of the negative cycles of length $\mbar_i$.

\bigskip

We now consider $\Spin(2n)$. In this case the fixed set $\coveringtorus^w$ can be identified with $T_\ev^{w_{\ev}}\times \coveringtorus^{w_\neg}_-$, where $T_\ev$ denotes the standard maximal torus in $\SO_{I^+_{\ev}}(2n)$ while $\coveringtorus_-$ denotes the standard maximal torus in $\Spin_{I^-}(2n)$. The identification is given by the map
\begin{align*}
(\alpha_1,\dots,\alpha_n,\beta) &\mapsto \big((\alpha)_{\ev},((\alpha)_-,\gamma)\big), \text{ where}\\
\gamma&=\beta\prod_{i=1}^l\prod_{j=1}^{d_i}\alpha_{n_i,j}^{n_i/2}.
\end{align*}
Note that the condition that $(\alpha_1,\dots, \alpha_n,\beta)$ lies in $\coveringtorus$ translates directly to show that $((\alpha)_-,\gamma)$ is in $\coveringtorus_-$. We will show that this map is equivariant.  It is easy to see that it is equivariant for the second factor $Z_{W(D_{I^-})}(w_\neg)$. The first factor $Z_{W(D_{I^+_{\ev}})}(w_{\ev})$ is a product of factors
\[
(\langle \sigma_{I_{n_i,j}}\rangle \times (\Z/n_i\Z))\wr_{d_i}S_{d_i} \quad\text{for each even positive cycle}\\
\]
It is again easy to see that the map is equivariant for elements of $\Z/n_i\Z$ and $S_{d_i}$. The element $\sigma_{I_{n_i,j}}$ acts to invert the coordinate $\alpha_{n_i,j}$ and to multiply $\beta$ by $\alpha_{n_i,j}^{n_i}$. Hence these elements also fix $\gamma$ and thus the map is equivariant as claimed.

The first factor of the fixed set, and the action on this, are exactly as in the $\SO(2n)$ case, yielding a product of simplices. For the second factor we note that the product $\prod_{i=1}^k\prod_{j=1}^{c_i}\alpha_{\mbar_i,j}$ must be $1$ so that the element $w_\neg$ preserves the value of $\gamma$. As in the simply connected $B_n$ case, the number of components arising from the choices of $\alpha_{\mbar_i,j}$ is $\left\lceil \frac 12 \prod_{i=1}^k (c_i+1)\right\rceil$. We additionally have the choice of the value of $\gamma$, however unless all $\alpha_{\mbar_i,j}$ are equal we can find a pair of negative cycles whose product acts to negate $\gamma$. Thus we have exactly two additional components arising from the choices of $\gamma$ in the cases where $\alpha_{\mbar_i,j}=1$ for all $i,j$ and where $\alpha_{\mbar_i,j}=-1$ for all $i,j$, giving a total of
\[
\left\lceil \frac 12 \prod_{i=1}^k (c_i+1)\right\rceil +2
\]
points in the quotient of the second factor.  Note that since $w_\neg$ lies in $W(D_n)$ the total number of cycles in $w_\neg$, and hence the total number of $\alpha_{\mbar_i,j}$ coordinates is even, hence the point with all coordinates $\alpha_{\mbar_i,j}=-1$ gives product of $1$ as required.

\bigskip

Turning now to the adjoint case ($\PSO(2n)$) the torus is $T/\{\pm I\}$ and the $w$ fixed set is the quotient $(T^w \sqcup A^w)/\{\pm I\}$. As before we write $T^w=T_{\ev}^{w_\ev}\times T_-^{w_\neg}$. The action of $\{\pm I\}$ commutes with the action of the centraliser.

We first consider the action of $\{\pm I \}$ on the discrete factor. We note that the action of $-I$ is to negate $\sum_{j=1}^{c_i}\alpha_{\mbar_i,j}$ for each $i$. If each multiplicity $c_i$ is even then there is a unique fixed point for the action, where all of these sums are zero. Otherwise the action is free on the discrete factor. Hence the action on $T^w/Z_{W(D_n)}(w)$ identifies components in pairs, with the exception of possibly one component, where $\{\pm I \}$ acts on the component, giving quotient $V(\delta-q,q)$ where $\delta=\sum_i d_i$ and $q=\sum_i\lceil d_i/2\rceil$ as in the adjoint $C_n$ group.

We recall that $A^w$ consists of points of the torus where on positive (even) cycles we have alternating values $\pm\alpha$ with $\alpha\in \TT$, while for negative cycles we have alternating $\pm\alpha$ where $\alpha$ is $\pm 1$ if $\mbar_i$ is even, and $\pm \sqrt{-1}$ if $\mbar_i$ is odd. As for $T^w$ we can factorise $A^w$ as $A_{\ev}^{w_\ev}\times A_-^{w_\neg}$. Noting that $Z_{W(D_{I_{\ev}^+})}(w_\ev)=Z_{W(C_{I_{\ev}^+})}(w_\ev)$ (since $w_\ev$ is in a split conjugacy class), the quotient of $A_{\ev}^{w_\ev}$ by the centraliser is, as in the adjoint $C_n$ case, a polysimplex.

For the action on $A_-^{w_\neg}$ we note that each negative cycle acts to negate the corresponding coordinate.  Since elements of $W(D_{I_-})$ must have an even number of negative cycles, we can negate any even number of coordinates, yielding exactly two components which remain distinct after the action of the permutation part of the centraliser.

Recalling that $\{\pm I\}$ acts trivially on $A^w/Z_{W(D_n)}(w)$, we conclude that this part of the quotient is $2\times V(\delta,0)$.

\bigskip

We conclude with the semi-spin case. We will focus on the case of $\SSpin^+(2n)$, for which the torus is $\coveringtorus/\langle E^+\rangle$: the $w$-fixed set in $\coveringtorus/\langle E^+\rangle$ can be identified with the $w^{\sigma_1}$ fixed set in $\coveringtorus/\langle E^-\rangle$ and likewise the quotients can also be identified. Here, unlike the split case, the elements $w$ and $w^{\sigma_1}$ lie in the same $W(D_n)$ conjugacy class, hence we do not need a separate calculation for $\SSpin^+(2n)$ and $\SSpin^-(2n)$.

The $w$ fixed set is the quotient $(\coveringtorus^w \sqcup \Atilde^w)/\langle E^+\rangle$. As in the $\Spin(2n)$ case we write $\coveringtorus^w=T_{\ev}^{w_\ev}\times \coveringtorus_-^{w_\neg}$ and write $(E^\ev,E^\neg)$ for the corresponding factorisation of $E^+$. Note that $E^\ev$ is $-1$ on all coordinates of $T_\ev$, while $E^\neg$ is $-1$ on all except (possibly) the $\gamma$ coordinate where its value is $(-1)^{n/2}$.

Since the action of $\langle E^+\rangle$ commutes with the action of the centraliser, the quotient of this part of the fixed set in $\coveringtorus/\langle E^+\rangle$ by the centraliser can be computed by taking the quotient from the $\Spin(2n)$ case, and dividing this by $\langle E^+\rangle$. Recall that $T_{\ev}/Z_{W(D_{I^+_{\ev}})}(w_{\ev})$ is identified as a polysimplex and $\coveringtorus_-^{w_\neg}/ Z_{W(D_{I^-})}(w_\neg)$ consists of $2+\left\lceil\frac 12\prod_{i=1}^k (c_i+1) \right\rceil$ points. These points are parametrised by values of $\alpha_{\mbar_i,j}=\pm 1$ up to permutation of the $j$ index and with the constraint that the product is $1$,  along with the values of $\gamma=\pm1$ when the $\alpha_{\mbar_i,j}$ are all equal.

We first consider the action of $E^\neg$ on the discrete factor. The action is to negate $\sum_{j=1}^{c_i}\alpha_{\mbar_i,j}$ for each $i$ and, when the $\alpha_{\mbar_i,j}$ are all equal, to multiply $\gamma$ by $(-1)^{n/2}$. If each multiplicity $c_i$ is even and additionally the sum $\sum_{i=1}^kc_i$ is a multiple of $4$, then, and only then, there is a point parametrised by $\alpha_{\mbar_i,j}=1$ for exactly $c_i/2$ coordinates and $-1$ for $c_i/2$ coordinates for each $i$. This point is fixed by $E^\neg$, while $E^\neg$ acts freely on all other points. We note that if each $c_i$ is even but the sum is not divisible by $4$ then $\prod_{i=1}^k (c_i+1)$ is congruent to $3$ mod $4$ and hence $2+\left\lceil\frac 12\prod_{i=1}^k (c_i+1) \right\rceil$ is even, with all points being identified in pairs. 
Likewise if any $c_i$ is odd (and hence there are at least two odd values of $c_i$) then $2+\left\lceil\frac 12\prod_{i=1}^k (c_i+1) \right\rceil$ is even and points are identified in pairs.

When there is a fixed point, we must take the quotient of the $T_{\ev}/Z_{W(D_{I^+_{\ev}})}(w_{\ev})$ factor for this single component by $\langle E^\ev\rangle$, obtaining a copy of $V(\delta-q,q)$.

\bigskip

We now consider the action of the centraliser on the set $\Atilde^{w}$. We cannot factorise the action of the centraliser $Z_{W(D_{I^+_{\ev}})}(w_\ev) \times Z_{W(D_{I^-})}(w_\neg)$ on $\Atilde^{w}$ in the same way that we factorised the action on $\coveringtorus^w=T_{\ev}^{w_\ev}\times \coveringtorus_-^{w_\neg}$. Positive cycles of $w_\ev$ of length $2$ mod $4$ are elements of the first factor of the centraliser which act non-trivially on the second factor of the torus: such a cycle negates the corresponding $\alpha_{n_i,j}$ coordinate while fixing all other coordinates, and hence negates the $\gamma$ coordinate, since this has a factor of $\alpha_{n_i,j}^{n_i/2}$.

Instead we work with the original $\alpha_{n_i,j},\alpha_{\mbar_i,j}$ and $\beta$ coordinates, and we begin by considering the action of the second factor $Z_{W(D_{I^-})}(w_\neg)$. The action of each (negative) cycle of $w_\neg$ is to negate the corresponding coordinate $\alpha_{\mbar_i,j}$ and to multiply $\beta$ by $\alpha_{\mbar_i,j}$.  If $m_i$ is even then $\alpha_{\mbar_i,j}$ is $\pm 1$ so applying the square of the negative cycle, which lies in  $Z_{W(D_{I^-})}(w_\neg)$, leaves $\alpha_{\mbar_i,j}$ unchanged and multiplies $\beta$ by $-\alpha_{\mbar_i,j}^2=-1$. We deduce that if some $m_i$ is even then the quotient of $\Atilde^w$ by $Z_{W(D_n)}(w)$ is the quotient of the image of $\Atilde^w$ in $A^w$ by $Z_{W(D_n)}(w)$. Since the image of $\Atilde^w$ is half of the components of $A^w$, the action of $Z_{W(D_{I^-})}(w_\neg)$ now reduces this image to give a single copy of the torus $A_\ev^{w_\ev}$, on which $Z_{W(D_{I^+_\ev})}(w_\ev)$ acts to give a polysimplex as quotient as in the $\PSO$ case.

We now consider the case where all $m_i$ are odd, and hence all $\alpha_{\mbar_i,j}$ are $\pm \sqrt{-1}$. Recall that $Z_{W(D_{I^-})}(w_\neg)$ is generated by products of pairs of negative cycles along with permutation groups $S_{c_i}$ where $c_i$ is the multiplicity of $m_i$. The permutation groups are generated by transpositions, and we note that a transposition exchanging $\alpha_{\mbar_i,j}$ and $\alpha_{\mbar_i,j'}$ either acts trivially if these values are equal or negates both values if $\alpha_{\mbar_i,j}=-\alpha_{\mbar_i,j'}$.  In the latter case the action of the transposition is the same as the action of the product of the two corresponding negative cycles, since this element negates $\alpha_{\mbar_i,j}$ and $\alpha_{\mbar_i,j'}$ and multiplies $\beta$ by their product which is $1$. Hence we have shown that the quotient $\Atilde^w/Z_{W(D_{I^-})}(w_\neg)$ is equal to the quotient of $\Atilde^w$ by the index $2$ subgroup of $\prod_i (\Z/m_i\Z)^{c_i}$. Using the pairs of negative cycles we can change the sign of all but one $\alpha_{\mbar_i,j}$ coordinate, so that these are all say $+\sqrt{-1}$, with the final coordinate determined by these: specifically the product of all the $\alpha_{\mbar_i,j}$ coordinates must be $(-1)^{n/2}$ and hence we require the final coordinate to take the value $(\sqrt{-1})^{1+n-\sum_ic_i}$. This leaves two components determined by the $\beta$ coordinate.

Noting that the factor $(\sqrt{-1})^{n-\sum_i c_i}$ is unchanged when raised to an odd power, we see that the product $\prod_{i,j}(\alpha_{\mbar_i,j})^{m_i}$ is given by
\[
(\sqrt{-1})^{\sum_i m_ic_i}(\sqrt{-1})^{n-\sum_i c_i}=(-1)^{t/2}
\]
where $t$ (which is even) is the total length of positive cycles minus the number of negative cycles.

Define $\beta'$ to be $(\sqrt{-1})^{t/2}\beta$ so that
\[
\beta^2\prod_{i,j}(\alpha_{n_i,j})^{n_i}\prod_{i,j}(\alpha_{\mbar_i,j})^{m_i}=(\beta')^2\prod_{i,j}(\alpha_{n_i,j})^{n_i}.
\]
The coordinates $\alpha_{n_i,j}$ and $\beta'$ define a point in $\widetilde{A_\ev^{w_\ev}}$, and hence we have an identification of $\Atilde^w/Z_{W(D_{I^-})}(w_\neg)$ with $\widetilde{A_\ev^{w_\ev}}$. It is easy to see that the actions of the elements of $Z_{W(D_{I^+_\ev})}(w_\ev)$ agree on the two spaces.  Hence $\Atilde^w/Z_{W(D_n)}(w)$ is isometric to $\widetilde{A_\ev^{w_\ev}}/Z_{W(D_{I^+_\ev})}(w_\ev)$, which by Lemma \ref{lemma1} is isometric to $\TSpinn^{w^2}/Z_{W(B_{n/2})}(w^2)$ where $w^2$ is viewed as a signed permutation of the set of pairs $\{\pm(1,2),\dots,\pm(n-1,n)\}$.  From Section \ref{B_n case} this is either $2\times V(\delta,0)$ or $V(\delta,0)$ when all $n_i$ are multiples of $4$ or some $n_i$ is $2$ mod $4$ respectively.

The results are summarised in the tables below.

\bigskip

\begin{table}[h]
\[
\begin{array}{c|c|c|c}
G & \Spin(2n)   &
 \SO(2n)   & 
\PSO(2n)    \\
 \hline
{}^{\strut} T(G)^w/Z_W(w)& \left( 2 +\left\lceil \frac 12 C\right\rceil\right) \times V(\delta,0) &  C\times V(\delta,0)  &\left\lfloor \frac 12 C\right\rfloor \times V(\delta,0) \;\sqcup\qquad\qquad\\
&&&\qquad(C\text{ mod } 2)\times V(\delta-q,q)\\[.5ex]
\hline
{}^{\strut} A^w/Z_W(w) &\text{n/a} &\text{n/a}
& 2\times V(\delta,0)
\end{array}
\]

\bigskip

    \caption{The structure of the sector in the classical cases for a conjugacy class with negative cycles but no odd positive cycles}

\end{table}

\bigskip

\begin{table}[h]
\[
\begin{array}{c|c|c|c|c}
 \multirow{2}{5em}{$\SSpin^{\pm}(2n)$}  &C & \prod_i m_i& & \\
 &\text{mod } 4&\text{mod } 2&4| \text{gcd}\{n_i\}&\text{gcd}\{n_i\}=2 \text{ mod } 4\\
 \hline
\multirow{2}{9em}{$(\coveringtorus^w/\langle E^\pm\rangle)/Z_W(w)$}  &{}^{\strut}0,3& \multirow{2}{1.5em}{$0,1$} & \multicolumn{2}{|c}{\left(1+\left\lceil\frac 14C \right\rceil\right)\times V(\delta,0)}    \\ [.5ex]
  &1&  & \multicolumn{2}{|c}{\left(1+\left\lfloor\frac 14C \right\rfloor\right)\times V(\delta,0)\,\sqcup\, V(\delta-q,q)  }   \\ [.5ex]
\hline
\multirow{2}{6em}{$\Atilde^w/Z_W(w)$}    &\multirow{2}{2.25em}{$0,1,3$}& {}^{\strut}0&V(\delta,0) & V(\delta,0) \\ [.5ex]
  &&{}^{\strut}1& 2 \times V(\delta,0) & V(\delta,0)  
\end{array}
\]

\bigskip

\caption{The structure of the sector in the semi-spin cases for a conjugacy class with negative cycles but no odd positive cycles}

\end{table}

\bigskip
where $C:=\prod_{i=1}^k (c_i+1)$, $\delta=\sum_id_i$ and $q=\sum_i\lceil d_i/2\rceil$.

\bigskip

\subsection{Type IIB: Negative cycles (at least two) and odd positive cycles}
We begin with the case of $\SO(2n)$. As usual the torus $T$ will be identified with the torus in the adjoint $B_n$ case and we may thus consider the action of elements of $W(B_n)$ as well as $W(D_n)$.

For an element $w$ in $W(D_n)$, a point of the torus $T$ will be fixed by $w$ if and only if it is fixed by each individual cycle of $w$ (regarded as an element of the larger group $W(B_n)$). In particular $w$ has at least one negative cycle, which we denote by $\eta$; this lies in $Z_{W(B_n)}(w)$ but not in $Z_{W(D_n)}(w)$.  Hence $Z_{W(B_n)}(w)=Z_{W(D_n)}(w) \cup \eta Z_{W(D_n)}(w)$ and since $\eta$ acts trivially the quotient of the fixed set by the action of $Z_{W(D_n)}(w)$ agrees with the quotient from the adjoint $B_n$ case.

For the adjoint case ($\PSO(2n)$), as in the adjoint $C_n$ case we can write
\[(T/\ZDSO)^w=T^w/\ZDSO \cup A^w/\ZDSO
\]
where $A^w$ denotes the set of points in $T$ which are negated by $w$.  However since we have odd positive cycles the $A^w$ part is empty. The quotient of $T^w/\ZDSO$ by the centraliser of $w$ is identified with the quotient of $T^w/Z_{W(D_n)}(w)$ by $\ZDSO$. As discussed in the $\SO(2n)$ case above, $T^w/Z_{W(D_n)}(w)$ is identified with the corresponding quotient in the adjoint $B_n$ or equivalently the simply connected $C_n$ case. It follows that the quotient of this by $\ZDSO$ is identified with the corresponding quotient in the adjoint $C_n$ case.

We now consider the simply connected ($\Spin(2n)$) case. Since we have odd positive cycles, the components are parametrised by the allowed values of $\alpha_{\mbar_i,j}=\pm 1$: these are required to have product equal to $1$. We consider the subgroup $N$ of $Z_{W(D_n)}(w)$ of elements which act trivially on $T^w$, and note that
\[
{\coveringtorus^w}/{Z_{W(D_n)}(w)} \cong \frac{\coveringtorus^w/N}{Z_{W(D_n)}(w)/N}.
\]
On points of $\coveringtorus^w$ each element of $N$ can either act trivially or negate the value of $\beta$, and by continuity this behaviour must be constant on each component. Hence $N$ must act trivially on the two components of $\coveringtorus^w$ meeting the centre $\ZDSpin$ of $\Spin(2n)$.

The components which are disjoint from the centre are those where the values of $\alpha_{\mbar_i,j}$ are not all equal. For each such component we can find an element of $N$ which is a product of two negative cycles of $w$ where one corresponds to a value $\alpha_{\mbar_i,j}=+1$ and the other to $-1$. The action of this element of $N$ is to multiply $\beta$ by the product $(+1)(-1)$, i.e.\ to negate this. It follows that the components of $\coveringtorus^w/N$ are exactly the components of $\coveringtorus^w$ meeting $\ZDSpin$ along with the components of $T^w$ disjoint from the centre $\ZDSO$ of $\SO(2n)$. Looking at the components of the latter type, it remains to act by $Z_{W(D_n)}(w)/N$, but since $N$ acts trivially on $T^w$ we obtain exactly the corresponding parts of the quotient $T^w/Z_{W(D_n)}(w)$ from the $\SO(2n)$ case. Note that while there are $C-2$ components of $T^w$ which are disjoint from $\ZDSO$ only those for which an even number of $\alpha_{\mbar_i,j}$ are equal to $-1$ appear in the image of $\coveringtorus^w/N$. This yields $\lceil\frac 12 C\rceil-2$ components of this form: note that the two components of $T^w$ meeting $\ZDSO$ are those where $\alpha_{\mbar_i,j}=+1$ for all $i,j$ (the identity component) and where $\alpha_{\mbar_i,j}=-1$ for all $i,j$ and both of these are in the image of $\coveringtorus^w/N$.

It remains to consider these two components. We note that the second component is the product of the identity component with an element of the centre of $\Spin(2n)$. It follows that these components are $Z_{W(D_n)}(w)$-equivariantly equivalent.  As in the $\SO(2n)$ case, letting $\eta$ be a negative cycle of $w$, we note that $\eta$ acts trivially on the identity component and so as $Z_{W(B_n)}(w)=Z_{W(D_n)}(w) \cup \eta Z_{W(D_n)}(w)$ these quotients are the same as the quotient of the identity component in the simply connected $B_n$ case.

For the semi-spin case $\SSpin^+(2n)$ we have $(\coveringtorus/\langle E^+\rangle)^w=\coveringtorus^w/\langle E^+\rangle \cup \Atilde^w/\langle E^+\rangle$, but similarly to the adjoint case the set $\Atilde^w$ is empty. We are therefore left with the quotient of $\coveringtorus^w/Z_{W(D_n)}(w)$ by $\langle E^+ \rangle$.   Recall that the $\lceil \frac 12C\rceil$ components of $\coveringtorus^w/Z_{W(D_n)}(w)$ arise from the choice of signs of $\alpha_{\mbar_i,j}$ and the requirement that these have product $+1$.

When at least one $c_i$ is odd then there must be two odd values since $\sum_i c_i$ is even for $w$ in $W(D_n)$. In this case $C$ is a multiple of $4$ and the action of $\langle E^+\rangle$ identifies the $\frac 12 C$ components in pairs yielding $\frac 14C$ components in the quotient. On the other hand when all $c_i$ values are even then the number of components in $\coveringtorus^w/Z_{W(D_n)}(w)$ may be even or odd depending on whether $C$ mod $4$ is $3$ or $1$ respectively. In the first of these cases the action of $\langle E^+\rangle$ again identifies components in pairs. In the second case there is exactly one component which is preserved by $\langle E^+\rangle$, namely the component where for each $i$ the number of $-1$ values for $\alpha_{\mbar_i,j}$ is exactly $c_i/2$.  Note that $C\equiv 1+\sum_i c_i$ mod $4$ when all $c_i$ are even hence $\sum_i c_i/2$ is even exactly when $C$ mod $4$ is $1$. In all cases the total number of components is $\left\lceil \frac 12\lceil\frac 12C\rceil\right\rceil=\lceil\frac 14C\rceil$.

The component of $\coveringtorus^w/Z_{W(D_n)}(w)$ which is preserved by $E^+$ is disjoint from the centre of $\Spin(2n)$ and hence we saw that this is identified with a component of $T^w/Z_{W(D_n)}(w)$. Hence the quotient of this component is the same as for the adjoint $D_n$ case which in turn was the same as the adjoint $C_n$ case, yielding $V(\delta-q,q)$ where $q=\sum_i\lceil d_i/2\rceil$.

\bigskip

\begin{table}[h]
\[
\begin{array}{c|c|c}
 C:=\prod_{i=1}^k (c_i+1) & C\text{ mod } 4 & \\
  &&\\
\hline
\Spin(2n)   & 0,1,3 & \lceil\frac 12 C\rceil \times V(\delta,0)  \\
&&\\
 \SO(2n)   & 0,1,3 & C \times V(\delta,0)\\
 &&\\
\SSpin^\pm(2n)   & 0,3 &  \lceil\frac 14 C\rceil \times V(\delta,0)\\
   & 1 & \lfloor\frac 14 C\rfloor \times V(\delta,0) \,\sqcup \,V(\delta-q,q)\\
  &&\\
\PSO(2n)   & 0 &  \left(\frac 12 C \times V(\delta,0)\right)\\
   & 1,3 &\left(\lfloor \frac 12 C\rfloor \times V(\delta,0) \right) \sqcup \,V(\delta-q,q)\\
\end{array}
\]

\bigskip

\caption{The structure of the $D_n$ sector associated to a conjugacy class which has negative cycles and odd positive cycles}

\end{table}

Note that the total number of components in the $\PSO(2n)$ cases is $\lceil \frac 12C\rceil$ agreeing with the number from the $\Spin(2n)$ case.

\bigskip

\subsection{Type IIA: No negative cycles \& at least one odd positive cycle}

\subsubsection{The case of $\SO(2n)$.}\label{no negs at least one odd pos}

The $w$-fixed set in the torus is then identified with the product of the $w_{\ev}$  and $w_{\odd}$ fixed sets in $T_{\ev}$, $T_{\odd}$ respectively, since a point is fixed by $w$ if and only if it is fixed by each cycle of $w$. The quotient of the $w$ fixed set by $Z_{W(D_n)}(w)$ is therefore the product of the quotients of the $w_{\ev}$ and $w_{\odd}$ fixed sets by their respective centralisers.  As in previous cases, the first factor is the product of $d_i$-simplices where $d_i$ ranges over the multiplicities of the even length cycles of $w$.

For the second factor the centraliser is an index $2$ subgroup of the $B_{I_{\odd}}$ centraliser which is given by the product
\[
\left(\prod_{n_i\text{ odd}}(\langle \sigma_{I_{n_i,j}}\rangle \times (\Z/n_i\Z))^{d_i}\right)\rtimes\prod_{n_i\text{ odd}} S_{d_i}.
\]
Specifically, since each $\sigma_{I_{n_i,j}}$ element is the product of an odd number ($n_i$) of negative $1$-cycles, the index $2$ subgroup is precisely the part with an even number of these  factors.

The $\Z/n_i\Z$ factors act trivially, leaving an action by the copy of $C_2^{{\delta_\odd}-1}$ generated by products of pairs of elements $\sigma_{I_{n_i,j}}$ and the permutation action of $\prod\limits_{n_i\text{ odd}} S_{d_i}$, where ${\delta_\odd}=\sum\limits_{n_i \text{ odd}} d_i$.

The $C_2^{{\delta_\odd}-1}$ factor acts on the fixed torus by inverting pairs of coordinates.  Thinking of the torus as given by the cube $[-\frac 12,\frac 12]^{\delta_\odd}$ with opposing faces identified, the actions of these elements are rotations through $\pi$ in coordinate planes.  A fundamental domain for this is given by $[-\frac 12,\frac 12] \times [0,\frac 12]^{{\delta_\odd}-1}$, with the following identifications.  The ``square'' faces $\{\pm \frac 12\}\times [0,\frac 12]^{{\delta_\odd}-1}$ are identified (as they were in the torus).  The ``rectangular'' faces given by setting $x_p=0$ or $x_p=\frac 12$ for some $p>1$ are preserved (up to the identifications in the torus) by the action negating the $1$ and $p$ coordinates.  Hence these faces are folded in the first coordinate.

This quotient can equivalently be described as the union of two copies of the cube $[0,\frac 12]^{{\delta_\odd}}$ glued on their boundary faces, and thus is the suspension of this boundary.  Hence this is a ${\delta_\odd}$-sphere.  The quotient of the fixed set by the centraliser is therefore
\[
S^{\delta_\odd}/\prod_{n_i \text{ odd}} S_{d_i}.
\]

Note that the action of the groups $S_{d_i}$  fixes the suspension points (the midpoints of the two cubes).  The identification $S^{\delta_\odd}=\Sigma(S^{{\delta_\odd}-1})$, where $S^{{\delta_\odd}-1}$ is the boundary of the cube, is thus equivariant where the permutations are taken to fix the suspension points.

\begin{lemma}\label{equator lemma}
    The boundary of $[0,\frac12]^{\delta_\odd}$ is $S_{\delta_\odd}$-equivariantly identified with $\Sigma W$ where $W$ is the union of the 
    ${\delta_\odd}-2$-dimensional subcubes of $[0,\frac12]^{\delta_\odd}$ containing neither $(0,\dots,0)$ nor $(\frac  12,\dots\frac 12)$.

    Moreover the `equator' $W$ can be $S_{\delta_\odd}$-equivariantly identified with the boundary $\partial\Delta^{{\delta_\odd}-1}$ of a ${\delta_\odd}-1$-simplex, specifically the link of $(0,\dots,0)$ in the boundary of $[0,\frac12]^{\delta_\odd}$.
\end{lemma}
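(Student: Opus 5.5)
Write $\delta=\delta_\odd$ and $K=[0,\frac12]^{\delta}$, with $S_\delta$ acting on $K$ by permuting coordinates. The vertices $0=(0,\dots,0)$ and $h=(\frac12,\dots,\frac12)$ are the only $S_\delta$-fixed vertices. The plan is to exhibit $\partial K$ as the union of the closed star of $0$ and the closed star of $h$, glued along a common boundary $W$. All the identifications will be made by radial (conical) maps, which commute with permutations, so equivariance will be automatic.

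\textbf{The two hemispheres.} Each codimension-one face of $K$ is either $\{x_p=0\}$, which contains $0$, or $\{x_p=\frac12\}$, which contains $h$. Let $F_0$ be the union of the faces $\{x_p=0\}$ and $F_h$ the union of the faces $\{x_p=\frac12\}$. These are the closed stars of $0$ and $h$ in $\partial K$, and $\partial K=F_0\cup F_h$. Their intersection $F_0\cap F_h$ is the union of the sets $\{x_p=0,\,x_q=\frac12\}$ with $p\neq q$. These are exactly the $(\delta-2)$-dimensional subcubes that contain neither $0$ nor $h$. Indeed, a $(\delta-2)$-face fixes two coordinates; it avoids $0$ if and only if some fixed coordinate equals $\frac12$, and it avoids $h$ if and only if some fixed coordinate equals $0$. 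So $F_0\cap F_h=W$.

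\textbf{Coning.} Every point of $F_0$ lies on a unique segment from $0$ to a point of $W$, and similarly for $F_h$ and $h$. I expect this to be the main point to check. To prove it, use the fact that $F_0$ is star-shaped from $0$ inside each facet $\{x_p=0\}$. In that facet, take the ray from $0$ through $x$ and follow it until it leaves the cube: it exits where some other coordinate reaches $\frac12$, that is, at a point of $W$. Uniqueness holds because the facet is convex, and continuity follows from the formula for the exit point, $x\mapsto x/(2\max_q x_q)$. This gives $S_\delta$-equivariant homeomorphisms $F_0\cong C(W)$ and $F_h\cong C(W)$, with cone points $0$ and $h$, and both restrict to the identity on $W$. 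Gluing them gives $\partial K\cong C(W)\cup_W C(W)=\Sigma W$, with permutations fixing the suspension points. This is the first claim.

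\textbf{The simplex.} Intersect $\partial K$ with the hyperplane $H=\{\sum_p x_p=\frac12\}$. This is the link of $0$. Since the points of $\partial K\cap H$ with all $x_p>0$ have some coordinate equal to $\frac12$, and so lie at the vertices, we get $\partial K\cap H=F_0\cap H$. Now $K\cap H$ is the standard simplex $\Delta^{\delta-1}$ with vertices $\frac12 e_p$, and $F_0\cap H$ is its boundary $\partial\Delta^{\delta-1}$, the union of the faces where some $x_p=0$. Radial projection from $0$ sends $W$ homeomorphically onto $F_0\cap H$. It is injective because each ray from $0$ in $F_0$ meets $W$ exactly once, by the coning step, and it is continuous with continuous inverse $y\mapsto y/(2\max_q y_q)$. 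It commutes with $S_\delta$, since both $H$ and the radial structure are permutation invariant. This identifies $W$ equivariantly with $\partial\Delta^{\delta-1}$, regarded as the link of $0$, and completes the lemma.
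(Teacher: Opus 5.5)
Your proof is correct and follows the paper's argument. The paper gets $\partial[0,\frac12]^{\delta_\odd}\cong\Sigma W$ from the two $S_{\delta_\odd}$-fixed vertices $(0,\dots,0)$ and $(\frac12,\dots,\frac12)$ acting as suspension points, which is your decomposition into the stars $F_0,F_h$, each a cone on $W$; it then identifies $W$ with $\partial\Delta^{\delta_\odd-1}$ by ``stereographic'' projection from $(0,\dots,0)$ onto the link, which is your radial projection onto $F_0\cap H$, so you have simply supplied the details the paper leaves implicit. One small point: your equality $\partial K\cap H=F_0\cap H$ needs $\delta_\odd\ge 2$, but the case $\delta_\odd=1$ is trivial since then $W=\emptyset=\partial\Delta^0$.
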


\begin{proof}
    The first statement of the lemma is an immediate consequence of the fact that $S_{\delta_\odd}$ fixes the points $(0,\dots,0)$ and $(\frac  12,\dots\frac 12)$.

    The second statement follows by stereographic projection from $(0,\dots,0)$ of $W$ onto the link.
\end{proof}

\begin{lemma}\label{join lemma}
    Let $k_\odd>0$ denote the number of odd values $n_i$. The quotient of $\partial\Delta^{{\delta_\odd}-1}$ by the subgroup $\prod_{n_i \text{ odd}} S_{d_i}$ of $S_{\delta_\odd}$ is the join of a simplex $\Delta^{{\delta_\odd}-1-k_\odd}$  with the boundary $\partial\Delta^{k_\odd-1}$ of a simplex. This is a ${\delta_\odd}-2$ ball unless ${\delta_\odd}=k_\odd$ in which case we just obtain the ${\delta_\odd}-2$-sphere $\partial\Delta^{k_\odd-1}$ as the quotient.
\end{lemma}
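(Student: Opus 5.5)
We regard $\partial\Delta^{{\delta_\odd}-1}$ as the join of its vertex set, so that it becomes a join of simplices, and then take the quotient block by block. Concretely, the vertices of $\Delta^{{\delta_\odd}-1}$ are indexed by the coordinates $p\in I^+_\odd$ (one for each odd cycle). Under the identification of Lemma \ref{equator lemma}, the subgroup $\prod_{n_i\text{ odd}} S_{d_i}$ acts by permuting vertices within the blocks $B_i$, where $B_i$ has size $d_i$ and consists of the odd cycles of length $n_i$. We write $\Delta^{{\delta_\odd}-1}=\Delta_{B_1}*\dots*\Delta_{B_{k_\odd}}$, where $\Delta_{B_i}$ is the $(d_i-1)$-simplex spanned by block $B_i$. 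The boundary consists of those points of the join whose support does not meet every vertex. It is more convenient to describe it in join coordinates: a point is $\sum_i t_i x_i$ with $x_i\in\Delta_{B_i}$, $t_i\geq 0$ and $\sum t_i=1$. Such a point lies in the boundary exactly when either some $t_i=0$, or some $x_i$ lies in $\partial\Delta_{B_i}$.

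The action preserves the join coordinates $t_i$ and acts on each $x_i$ by $S_{d_i}$. The quotient $\Delta_{B_i}/S_{d_i}$ is a $(d_i-1)$-simplex $D_i$, namely the ordered region $x_{i,1}\geq\dots\geq x_{i,d_i}$. Inside $D_i$, the image of $\partial\Delta_{B_i}$ is the single facet $F_i=\{x_{i,d_i}=0\}$. This holds because every boundary point has a vanishing barycentric coordinate, which after sorting becomes the last one. It is also empty when $d_i=1$, which is consistent. The quotient of the join is the join of the quotients, since the action is blockwise. So $\partial\Delta^{{\delta_\odd}-1}/\prod S_{d_i}$ is the subcomplex of $D_1*\dots*D_{k_\odd}$ consisting of points with some $t_i=0$ or some $x_i\in F_i$.

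Next we write each $D_i$ as the join $F_i*\{v_i\}$, where $v_i$ is the vertex opposite $F_i$ (with $F_i=\emptyset$ when $d_i=1$). Then
\[
D_1*\dots*D_{k_\odd}=(F_1*\dots*F_{k_\odd})*(\{v_1\}*\dots*\{v_{k_\odd}\})=\Delta^{{\delta_\odd}-1-k_\odd}*\Delta^{k_\odd-1},
\]
because $\sum_i\dim F_i+k_\odd-1=\sum_i(d_i-2)+k_\odd-1={\delta_\odd}-1-k_\odd$. A point of this join misses the boundary subcomplex exactly when every $v_i$ carries positive weight. So the subcomplex is precisely $F*\partial\Delta^{k_\odd-1}$, where $F=F_1*\dots*F_{k_\odd}$, and this is the claimed join.

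The main point needing care is the identification of the boundary subcomplex in the last step, that is, checking that the condition ``some $t_i=0$ or some $x_i\in F_i$'' is exactly ``some $v_i$ has zero weight''. One verifies this by expanding each $x_i$ in $F_i*\{v_i\}$ coordinates. For the final assertion, recall that the join of a nonempty simplex with a sphere is a ball: $\Delta^a*S^b\cong C(S^a)$-type joins give $D^{a+b+1}$, since $\Delta^a*S^b\cong D^{a}*S^b$ and $D^a*S^b\cong D^{a+b+1}$ by the standard homeomorphism $D^a\cong C(S^{a-1})$ together with $S^{a-1}*S^b\cong S^{a+b}$. The resulting dimension is ${\delta_\odd}-1-k_\odd+k_\odd-2+1={\delta_\odd}-2$. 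When ${\delta_\odd}=k_\odd$, every $d_i=1$, so $F=\emptyset$ and the quotient is just $\partial\Delta^{k_\odd-1}=S^{{\delta_\odd}-2}$.
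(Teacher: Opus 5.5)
Your proof is correct and follows essentially the same route as the paper's. The paper also takes the sorted region as a strict fundamental domain, writes it as the join of the per-block simplices with vertices $v_{i,j}$, and identifies the image of the boundary as the union of the facets opposite the block barycentres $v_{i,1}$ (your $v_i$), which gives $\Delta^{\delta_\odd-1-k_\odd}*\partial\Delta^{k_\odd-1}$; your join-coordinate check that ``some $t_i=0$ or some $x_i\in F_i$'' is equivalent to ``some $v_i$ has zero weight'' just makes explicit the paper's closing remark that every point of the quotient is a convex combination of a point of the first simplex with a point of $\partial\Delta^{k_\odd-1}$.
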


\begin{proof}
    We begin by considering the quotient of $\Delta^{{\delta_\odd}-1}$. We will use barycentric coordinates $(x_1,\dots,x_{\delta_\odd})$ with $x_p\geq0$ and $\sum_px_p=1$.
    
    The permutation subgroup $S_{d_i}$ acts on $d_i$-tuples of coordinates, which we take to be $x_{p_i+1},\dots,x_{p_i+d_i}$ for some $p_i$. A fundamental domain for the action is then given by points whose coordinates satisfy $x_{p_i+1}\leq\dots\leq x_{p_i+d_i}$ for each $i$.

    For each $i$ and for $j=1,\dots,d_i$ let $v_{i,j}$ be the point with coordinates
    \[
    x_{p_i+j},\dots,x_{p_i+d_i}=\frac{1}{d_i+1-j}\]
    and all other coordinates equal to $0$. For  fixed  $i$ these points, which lie in the fundamental domain, span a $d_i-1$ simplex, and the full fundamental domain is the join of these simplices since a general point can be written as an affine combination of points in them.

    The quotient of $\partial \Delta^{{\delta_\odd}-1}$ is the set of points in the above fundamental domain such that at least one coordinate is zero. A ${\delta_\odd}-2$-dimensional face of the fundamental domain is thus in the desired quotient if and only if its vertices share a common zero coordinate.
    
    Since the $x_{p_i+1}$ coordinate of $v_{i,j}$ is zero if and only if $j>1$, and the $x_{p_i+1}$ coordinate of $v_{i',j}$ is zero for all $i'\neq i$ it follows that the set of all vertices except $v_{i,1}$ spans a face in $\partial \Delta^{{\delta_\odd}-1}$. The intersection of these over $i$ gives a common ${\delta_\odd}-1-k_\odd$-simplex. On the other hand the vertices $\{v_{i,1} : n_i \text{ odd}\}$ span a $k_\odd-1$-simplex in the quotient of $\Delta^{{\delta_\odd}-1}$, whose boundary is in the quotient of $\partial\Delta^{{\delta_\odd}-1}$.

    A general point of the quotient of $\partial\Delta^{{\delta_\odd}-1}$ is a convex combination of a point in the ${\delta_\odd}-1-k_\odd$-simplex with a point in the boundary of the $k_\odd-1$-simplex.
\end{proof}

Applying the above two lemmas the quotient $S^{\delta_\odd}/\prod_{n_i \text{ odd}} S_{d_i}$ is the double suspension of either a ${\delta_\odd}-2$-sphere, in the case that $k_\odd={\delta_\odd}$ i.e.\  all $d_i$ are $1$, or a ${\delta_\odd}-2$-ball.

Hence the full quotient of the $w$-fixed set by $Z_{W(D_n)}(w)$ is either a ball $V(\delta,0)$ or a product $V(\delta_\ev,0)\times S^{\delta_\odd}$ depending on whether $d_i=1$ for all odd $n_i$, where $\delta_\ev=\sum\limits_{n_i\text{ even}} d_i$.

\subsubsection{The adjoint case, $\PSO(2n)$}

Since we have odd positive cycles there is no $A^w$ part and thus the quotient of the fixed set in $T/\ZDSO$ by the action of
the centraliser is exactly the quotient by $\ZDSO$ of the quotient from the above $\SO(2n)$ case.

The $\ZDSO$ action translates by $\pm\frac12$ in each coordinate.  When ${\delta_\odd}$ is even, or equivalently when $n$ is even,
the signed permutation which negates all coordinates corresponding to the odd length cycles lies in $W(D_n)$, and hence the composition of this with the translation acts on the cube reversing each $[0,\frac 12]$ factor.  Note that this preserves the midpoint of this cube, and similarly it preserves the other suspension point $(\frac 34,\frac 14,\dots,\frac14)$. The action on the ${\delta_\odd}$-sphere is thus the suspension of the antipodal action on the ${\delta_\odd}-1$-sphere.

When ${\delta_\odd}$ is odd (or equivalently $n$ is odd) we think of the $\ZDSO$ action as translating by $+\frac12$ in the first coordinate and $-\frac 12$ in all the others. Since there are now an even number of coordinates translated by $-\frac12$ we can, as before, negate these using the action of the centraliser.  Thus the action translates the first coordinate of the cuboid and reverses the others. The action on the sphere therefore exchanges the suspension points in this case, while still acting antipodally on the boundary of the $[0,\frac 12]^{{\delta_\odd}}$-cube. Thus in this case the action on the entire ${\delta_\odd}$-sphere is antipodal.

In the case where $I^+_\ev$ is empty and $d_i=1$ for all $i$ this gives the complete answer: the quotient of the fixed set is either $\RP^{\delta_\odd}$ or $\Sigma \RP^{{\delta_\odd}-1}$ for ${\delta_\odd}$ respectively odd or even.

Turning to the general case where $d_i=1$ for all odd $n_i$ we examine the quotient of the product $V(\delta_\ev,0)\times S^{\delta_\odd}$ where $V(\delta_\ev,0)$ is the factor arising from the positive even cycles.

We first consider the case when $n$ (and hence ${\delta_\odd}$) is odd. Since the action of $\ZDSO$ on $S^{\delta_\odd}$ is fixed point free in this case, the quotient is a $V(\delta_\ev,0)$-bundle over $\RP^{\delta_\odd}$. To identify this explicitly we focus on the subset $V(q_\ev,0)\times S^{\delta_\odd}$ where $q_\ev=\sum\limits_{n_i \text{ even}} \lceil d_i/2\rceil$, noting that the action negates all directions of $V(q_\ev,0)$. We consider the natural identification of this as a subset of the sphere $\partial (V(q_\ev,0)\times CS^{\delta_\odd})$. The actions on $V(q_\ev,0), S^{\delta_\odd}$ yield the antipodal action on this sphere. The quotient of $V(q_\ev,0)\times S^{\delta_\odd}$ by the involution is thus identified with the unit ball bundle in the normal bundle of $\RP^{\delta_\odd}$ in $\RP^{q_\ev+\delta_\odd}$. The normal bundle is the sum of $q_\ev$ copies of the tautological line bundle over $\RP^{\delta_\odd}$. There is additionally a factor of dimension $\delta_\ev-q_\ev$ with trivial action.
We denote the vector bundle over $\RP^{\delta_\odd}$ which is the sum of $p$ copies of the trivial line bundle and $q$ copies of the tautological line bundle by $\bundle{\delta_\odd}pq$, and denote the unit ball bundle of this by $\unitbundle{\delta_\odd}pq$. The quotient in this case is therefore $\unitbundle{\delta_\odd}{\delta_\ev-q_\ev}{q_\ev}$.

On the other hand when $n$ (and hence ${\delta_\odd}$) is even we consider  $V(\delta_\ev,0)\times S^{\delta_\odd}$ as the union (over $V(\delta_\ev,0)\times S^{{\delta_\odd}-1}$) of two copies of $V(\delta_\ev,0)\times C S^{{\delta_\odd}-1}$ each of which is preserved by the $\ZDSO$ action. Applying the results of Section \ref{involutions} we can identify $V(\delta_\ev,0)\times C S^{{\delta_\odd}-1}$ with a cuboid on which $\ZDSO$ acts to fix $\delta_\ev-q_\ev$ coordinates and invert $q_\ev+{\delta_\odd}$ coordinates where $q_\ev=\sum\limits_{n_i \text{ even}} \lceil d_i/2\rceil$. This results in a quotient $V(\delta_\ev-q_\ev, q_\ev+\delta_\odd)$. The two copies of this are then glued together over the quotient of $V(\delta_\ev,0)\times S^{{\delta_\odd}-1}$, which as above is $\unitbundle{\delta_\odd-1}{\delta_\ev-q_\ev}{q_\ev}$.

Note that in both the even and odd cases the space $V(\delta_\ev,0)\times S^{{\delta_\odd}}$ is $\ZDSO$-equivariantly contractible onto a copy of $S^{\delta_\odd}$ and therefore at the level of homotopy these quotients are again either $\RP^{\delta_\odd}$ or $\Sigma\RP^{{\delta_\odd}-1}$.

\bigskip

We now turn to the case when at least one of the $d_i$, for $n_i$ odd, is greater than $1$. We take the fundamental domain $\Delta$ in $[0,\frac 12]^{\delta_\odd}$ for the permutation action such that for each odd $n_i$, the corresponding $d_i$ coordinates are in ascending order. We consider the point $x_0$ in this fundamental domain, which is also in the boundary of the cube $[0,\frac 12]^{\delta_\odd}$ whose coordinates are as follows.  For each odd $n_i$ with $d_i=1$, the corresponding coordinate in the cube is $\frac 14$. For each odd $n_i$ with $d_i>1$ the corresponding $d_i$ coordinates are $(0,\frac 14,\dots,\frac 14,\frac 12)$.

Let $\tau$ denote the map on $[0, \frac 12]^{\delta_\odd}$ reversing each coordinate.  Note that in the case where ${\delta_\odd}$ is even this is the action of the generator of $\ZDSO$, while in the case that ${\delta_\odd}$ is odd the generator takes the interior of the cube off itself, however it still acts in the same way as $\tau$ on the boundary. Let $\mu$ denote the product of transpositions which within each set of $d_i$ coordinates reverses the order of said coordinates. Note that the number of transpositions required to do this is $\delta_\odd-q_\odd$ where $q_\odd=\sum\limits_{n_i \text{ odd}} \lceil d_i/2\rceil$. By construction $\mu\circ\tau$ preserves the fundamental domain for the permutations, and moreover this fixes $x_0$.  Additionally, since $\tau$ agrees with the action of $\ZDSO$ on $\partial [0, \frac 12]^{\delta_\odd}$, the action of $\ZDSO$ on $\partial [0, \frac 12]^{\delta_\odd}$ descends to the action of $\mu\circ\tau$ on the quotient of $\partial [0, \frac 12]^{\delta_\odd}$ by the permutations.

Let $H$ denote the hyperplane bisector of $x_0,\tau x_0$. For $x\neq \tau x_0$ define $\rho(x)$ to be the stereographic projection of $x$ onto the hyperplane $H$ along the line from $x$ to  $\tau x_0$. Since $\Delta$ is convex (and does not contain $\tau x_0$) its image $\rho(\Delta)$ is also convex.  Moreover, each point in  $\rho(\Delta)$ is the image of a unique point in the intersection of $\Delta$ with the boundary of the cube thus establishing a homeomorphism between said intersection and the convex set $\rho(\Delta)$.

The sphere $S^{\delta_\odd}$ is the suspension of the boundary of the cube, and hence its quotient by the permutation group is identified with the suspension of $\rho(\Delta)$. Note that since $\mu\circ \tau$ fixes $\tau x_0$, the map $\rho$ commutes with the action of $\mu\circ\tau$ and hence the action of $\ZDSO$ on the quotient of $\partial [0, \frac 12]^{\delta_\odd}$ is identified with the action of $\mu\circ \tau$ on $\rho(\Delta)$. Hence the map from the quotient of $S^{\delta_\odd}$ by permutations, to the space  $\Sigma(\rho(\Delta))$, is equivariant for the action of $\ZDSO$ on $S^{\delta_\odd}$ and the action of $\mu\circ \tau$ on 
$\rho(\Delta)$, suspended either by exchanging or fixing suspension points depending on whether ${\delta_\odd}$ (or equivalently $n$) is odd or even.

The quotient of the fixed set in the torus $T/\ZDSO$ by the action of the centraliser is thus identified with the quotient of $V(\delta_\ev,0)\times\Sigma(\rho(\Delta))$ by the product of the corresponding involutions.  Recall that on the first factor (identified as usual with a cube) the involution preserves $\delta_\ev-q_\ev$ and reverses $q_\ev$ coordinates. The map $\mu\circ\tau$ reverses $q_\odd-1$ coordinates of $\rho(\Delta)$, so on the second factor the action reverses either $q_\odd$ or $q_\odd-1$ coordinates for ${\delta_\odd}$ odd or even respectively since this determines whether or not the suspension direction is also reversed.

Thus in the case that ${\delta_\odd}$ (and also $n$) is odd the resulting quotient is therefore  $V(\delta-q,q)$, while when ${\delta_\odd}$ is even the resulting quotient is $V(\delta-q+1,q-1)$. We remark that in the case when $n$ is odd this answer is homeomorphic to the corresponding quotient in the $C_n$ case however this is  not true for even $n$.

\subsubsection{The simply connected case, $\Spin(2n)$} 

In this case the fixed set $\coveringtorus^w$ can be identified with $T_\ev^{w_{\ev}}\times \coveringtorus^{w_\odd}_\odd$, where $T_\ev$ denotes the standard maximal torus in $\SO_{I^+_{\ev}}(2n)$ while $\coveringtorus_\odd$ denotes the standard maximal torus in $\Spin_{I^+_\odd}(2n)$. The identification is given by the map
\begin{align*}
(\alpha_1,\dots,\alpha_n,\beta) &\mapsto \big((\alpha)_{\ev},((\alpha)_\odd,\gamma)\big), \text{ where}\\
\gamma&=\beta\prod_{n_i\text{ even}}\prod_{j=1}^{d_i}\alpha_{n_i,j}^{n_i/2}.
\end{align*}
This map is equivariant: the argument is exactly as in Section \ref{D_n case 2} for the factorisation in terms of the even (positive) and negative parts.

The first factor of the fixed set, and the action on this, are exactly as in the $\SO(2n)$ case, yielding quotient $V(\delta_\ev,0)$. For the second factor we first note that there is a single component since traversing a loop in one of the $\alpha_{n_i,j}$ coordinates, for odd $n_i$, negates the value of $\gamma$.

As there is a single component we must simply take the quotient of the fixed set in the Lie algebra $\t_\odd^{w_\odd}$ by its intersection with the nodal lattice, and by the action of the centraliser $Z_{W(I^+_\odd)}(w_\odd)$. Identifying $\t_\odd^{w_\odd}$ with $\R^{\delta_\odd}$, where ${\delta_\odd}$ is the sum of $d_i$ for $n_i$ odd, the lattice is identified with the even parity lattice $\Z^{\delta_\odd}$. We denote this by $\Z^{\delta_\odd}_\ev$.

Note that the cycles of $w_{\odd}$ all act trivially on the fixed set: the part of the centraliser which acts nontrivially is the index $2$ subgroup of $\langle \sigma_{I_{n_i,j}} : n_i \text{ odd} \rangle$ along with permutation groups $S_{d_i}$. This is exactly the intersection of the product of Weyl groups $W(B_{d_i})$ (for $n_i$ odd) with $W(D_{\delta_\odd})$, and we denote the intersection of $\langle \sigma_{I_{n_i,j}} : n_i \text{ odd} \rangle$ with $W(D_{\delta_\odd})$ by $\langle \sigma_{I_{n_i,j}} : n_i \text{ odd} \rangle_{D_{{\delta_\odd}}}$. We define
\[H:=\Z^{\delta_\odd}_\ev \rtimes \langle \sigma_{I_{n_i,j}} : n_i \text{ odd} \rangle_{D_{{\delta_\odd}}}\rtimes \prod_{n_i\text{ odd}} S_{d_i}\]
and
\[G:=\Z^{\delta_\odd} \rtimes \langle \sigma_{I_{n_i,j}} : n_i \text{ odd} \rangle\rtimes \prod_{n_i\text{ odd}} S_{d_i}\]
which contains $H$ as an index $4$ subgroup. By Lemma \ref{lemma0} our intended quotient $Z_{W(D_{I^+_\odd})}(w_\odd)\backslash \coveringtorus_\odd^{w_\odd}=H\backslash \R^{\delta_\odd}$ is identified with $G\backslash (G/H\times \R^{\delta_\odd})$.

\smallskip

Let $N$ denote the normal subgroup $N=\Z^{\delta_\odd} \rtimes \langle \sigma_{I_{n_i,j}} : n_i \text{ odd} \rangle$ of $G$. We note that this is the product of infinite dihedral groups $\D_\infty^{\delta_\odd}$ acting on $\R^{\delta_\odd}$ in the standard way. It thus has strict fundamental domain $[0,\frac 12]^{\delta_\odd}$ in $\R^{\delta_\odd}$.

If $x_{n_i,j}=0$ for some $i,j$ then the point $x$ is stabilised by the corresponding reflection in $N$, and likewise if $x_{n_i,j}=\frac 12$ for some $i,j$. Moreover for a given point $x$, the reflections corresponding to coordinates which are $0$ or $\frac 12$ generate the stabiliser. We consider the actions of these reflections on $G/H$. The latter is a Klein $4$ group, which we may think of as the dihedral group $\D_2$ obtained as the abelianisation of $\D_\infty$. Each $\D_\infty$ factor acts by left translation on this abelianisation.

The reflections in hyperplanes $x_{n_i,j}=0$ all map to one generator of $\D_2$, which we denote $s_0$, and the reflections in $x_{n_i,j}=\frac 12$ all map to the other generator which we denote $s_{\!\frac12}$. We have $4$ copies of $[0,\frac 12]^{\delta_\odd}$ indexed by $\D_2=\{e,s_0,s_{\!\frac12},s_0s_{\!\frac 12}\}$.
Letting $U_0$ denote the union of codimension $1$ cubes containing $(0,\dots,0)$ and $U_{\!\frac 12}$ the union of cubes containing $(\frac 12,\dots,\frac 12)$, the copy of $U_0$ indexed by $s\in \D_2$ is identified with the $s_0s$ copy, and likewise the copy of $U_{\!\frac12}$ indexed by $s$ is identified with the $s_{\!\frac 12}s$ copy.

With these identifications, there are now two copies of $U_{\!\frac12}$, which are themselves identified over the set $W:=U_0\cap U_{\!\frac12}$ which is the union of ${\delta_\odd}-2$-subcubes containing neither  $(0,\dots,0)$ nor $(\frac12,\dots,\frac12)$. By Lemma \ref{equator lemma} this intersection is a ${\delta_\odd}-2$-sphere. Each of the copies of $U_{\!\frac12}$ can be identified with the cone on $W$ and hence the union of the two copies is $\Sigma (W)$. Moreover the cube $[0,\frac 12]^{\delta_\odd}$ is obtained from $U_{\!\frac12}$ by coning this to the point $(0,\dots,0)$, and hence the union of the four cubes, glued along $U_0$ and $U_{\!\frac12}$ is exactly the ${\delta_\odd}$-sphere $\Sigma\Sigma(W)$.

It remains to act with the group $G/N=\prod\limits_{n_i\text{ odd}} S_{d_i}$. Note that the above identification of the quotient by $N$ with the ${\delta_\odd}$-sphere $\Sigma\Sigma(W)$ gives a $G/N$-equivariant identification with the ${\delta_\odd}$-sphere appearing in the $\SO(2n)$ case.  Applying Lemmas \ref{equator lemma} and \ref{join lemma} this quotient is a ${\delta_\odd}$-sphere if all $d_i$ are $1$ and a ${\delta_\odd}$-ball otherwise.

The full quotient is thus either $V(\delta,0)$ or $V(\delta_\ev,0)\times S^{\delta_\odd}$.

\subsubsection{The semi-spin case, $\SSpin^\pm(2n)$}
For the semi-spin case $\SSpin^\pm(2n)$ we have $(\coveringtorus/\langle E^\pm\rangle)^w=\coveringtorus^w/\langle E^\pm\rangle \cup \Atilde^w/\langle E^\pm\rangle$, but similarly to the adjoint case the set $\Atilde^w$ is empty. We are therefore left with the quotient of $\coveringtorus^w/Z_{W(D_n)}(w)$ by $\langle E^\pm \rangle$. Since these quotients are isomorphic, for simplicity we consider the case of $E^+$, which on the Lie algebra is translation by $(\frac 12,\dots,\frac 12)$.

We recall from the preceding subsection that $\coveringtorus^w/Z_{W(D_n)}(w)$ is the product of the polysimplex $V(\delta_\ev,0)$ with the quotient of $\Sigma\Sigma(W)$ by the action of $\prod\limits_{n_i\text{ odd}} S_{d_i}$. Here $W$ is the union of the ${\delta_\odd}-2$-dimensional subcubes of $[0,\frac 12]^{{\delta_\odd}}$ containing neither of $(0,\dots,0),(\frac12,\dots,\frac12)$. Moreover as noted above the space $\Sigma\Sigma(W)$ is equivariantly identified with the ${\delta_\odd}$-sphere appearing in the $\SO(2n)$ case.

We now consider the action of $\langle E^+\rangle$ on $V(\delta_\ev,0)\times \Sigma\Sigma(W)$. As usual the action on the first (polysimplex) factor is the involution exchanging pairs of vertices with at most one fixed vertex in each simplex factor. The vertices of $W$ have coordinates which are $0$ or $\frac 12$ and translating gives coordinates of $\frac 12$ and $1$.  However since at least one coordinate is now $\frac 12$ we can apply rotations to change the $1$ coordinates to $0$.  We now consider the four suspension points.

The first suspension is built by suspending $W$ to the points $(e,(\frac 12,\dots,\frac 12))=(s_{\!\frac 12},(\frac 12,\dots,\frac 12))$ and $(s_0,(\frac 12,\dots,\frac 12))=(s_0s_{\!\frac 12},(\frac 12,\dots,\frac 12))$, and the second suspension is to the points $(e,(0,\dots,0))=(s_0,(0,\dots,0))$ and $(s_{\!\frac 12},(0,\dots,0))=(s_0s_{\!\frac 12},(0,\dots,0))$. The action of $E^+$ in these coordinates  is to add (or subtract) $E^+$ when the $\D_2$ index is $e$ or $s_0s_{\!\frac 12}$ and to add $E^-$ when the index is $s_0$ or $s_{\!\frac 12}$. Hence the effect of the element $E^+$ is to exchange the pair of suspension points for the first suspension with the pair for the second.

By the results of Section \ref{involutions} there is a homeomorphism $\Sigma\Sigma(W)\to (W\times Y)/\sim$, where the equivalence relation identifies $W$ to a point whenever the point of $Y$ is on the boundary. Under this identification the action of $E^+$ is the above action on $W$, and coincides with the reflection $s\leftrightarrow t$ on $Y$.

As noted above, the space $\Sigma\Sigma(W)$ is also identified with the ${\delta_\odd}$-sphere appearing in the $\SO(2n)$ case and therefore carries an action of $\ZDSO$. Now $W$ is coordinatised as before, while the suspension points for the first suspension are now the points $(0,\dots,0)$ and $(\frac 12,\dots,\frac 12)$ in the boundary of $[0,\frac 12]^{\delta_\odd}$. The second suspension is to two copies of the centre $(\frac 14,\dots,\frac 14)$ of this cube.  Recall that the action of $\ZDSO$ exchanges the points of the first suspension and fixes the points of the second. Hence the action of $\ZDSO$ on $(W\times Y)/\sim$ is the above action on $W$ and the reflection on $Y$ negating one of the two coordinates.

We now observe that the map $(t,s) \mapsto  (t+s,t-s)$ composed with the radial rescaling from $[-1,1]^2$ to $Y$ intertwines the actions of $\langle E^+\rangle$ and $\ZDSO$. We conclude that the quotients in the semi-spin case therefore are homeomorphic to the corresponding quotients from the $\PSO(2n)$ case (where $n$ is even).

\bigskip

\subsubsection{Type IIA summary}

For the case where the cycle type has no negative cycles and at least one odd positive cycle, the quotients in the $\SO(2n)$ and $\Spin(2n)$ cases are homeomorphic, and are given by the product of a polysimplex of dimension $\delta_\ev$ with either a ${\delta_\odd}$-ball or a ${\delta_\odd}$-sphere depending on whether or not $d_i=1$ for all odd $n_i$, see Table \ref{IIA Spin/SO}.

\smallskip
We recall that $\delta_\ev,\delta_\odd$ are respectively the number of positive cycles of even and odd length, while $q_\ev=\sum_{n_i\text{ even}} \lceil d_i/2\rceil$ and $q_\odd=\sum_{n_i\text{ odd}} \lceil d_i/2\rceil$ where $d_i$ is the multiplicity of the positive cycle of length $n_i$.

\smallskip

\begin{table}[h]

\[
\begin{array}{c|c}
 \SO(2n) \text{ and } \Spin(2n) \text{ cases }  \\
\hline
 d_i = 1 \text{ for all odd } n_i& V(\delta_\ev,0)\times S^{\delta_\odd}\\
 d_i > 1  \text{ for some  odd } n_i& V(\delta,0)\\
\end{array}
\]

\smallskip

\caption{\label{IIA Spin/SO}The structure of the sector associated to a conjugacy class with no negative cycles and at least one odd positive cycle in the $\SO(2n)$ and $\Spin(2n)$ cases.
}
\end{table}

For the $\PSO$ case the quotients depend on whether $n$ is odd (see Table \ref{IIA PSO(2n) n odd}) or even (see Table \ref{IIA PSO(2n)/SSpin(2n) n  even}).  The quotients for the semi-spin case agree with those for $\PSO(2n)$ with $n$ even. We recall that $\unitbundle{r}pq$ denotes the unit ball bundle in the sum of $p$ copies of the trivial line bundle and $q$ copies of the tautological line bundle over $\RP^{r}$.

\bigskip

\begin{table}[h]
\[
\begin{array}{c|c}
 \PSO(2n) \text{ for $n$ odd}  \\
\hline
 d_i = 1 \text{ for all odd } n_i& \unitbundle{\delta_\odd}{\delta_\ev-q_\ev}{q_\ev}\\
 d_i > 1  \text{ for some  odd } n_i& V(\delta-q,q)\\
\end{array}
\]

\smallskip

\caption{\label{IIA PSO(2n) n odd}The structure of the sector associated to a conjugacy class with no negative cycles and at least one odd positive cycle in the $\PSO(2n)$ case when $n$ is odd.
}
\end{table}

\begin{table}[h]
\[
\begin{array}{c|c}
 \PSO(2n) \text{ and } \SSpin^\pm(2n)\\ \text{ for $n$ even}  \\
\hline
 d_i = 1 \text{ for all odd } n_i&\mathcal{D}(V(\delta_\ev-q_\ev, q_\ev+\delta_\odd),\unitbundle{\delta_\odd-1}{\delta_\ev-q_\ev}{q_\ev}) \\
 d_i > 1  \text{ for some  odd } n_i& V(\delta-q+1,q-1)\\
\end{array}
\]
where $\mathcal{D}(X,Y)$ denotes the double of $X$ over $Y$.

\smallskip

\caption{\label{IIA PSO(2n)/SSpin(2n) n  even}The structure of the sector associated to a conjugacy class with no negative cycles and at least one odd positive cycle in the $\PSO(2n)$ and $\SSpin^\pm(2n)$ cases when $n$ is even.
}
\end{table}

\bigskip

\noindent By collapsing fibres, we note that the unit ball bundle $\unitbundle{\delta_\odd}{\delta_\ev-q_\ev}{q_\ev}$
retracts onto the base space $\RP^{\delta_\odd}$, while the double 
\[
\mathcal{D}(V(\delta_\ev-q_\ev, q_\ev+\delta_\odd),\unitbundle{\delta_\odd-1}{\delta_\ev-q_\ev}{q_\ev})
\]
retracts onto $\Sigma\RP^{\delta_\odd-1}$. If $\delta_\ev=0$ then there are no fibres and we have exactly the base space $\RP^{\delta_\odd}$ or $\Sigma\RP^{\delta_\odd-1}$.

\FloatBarrier
\section{Examples and applications}
\subsection{The Lie groups of type $D_4$}\label{D4 Lie groups}
For $n=4$, the lattice of Lie groups arising is somewhat simplified by the classical fact that the groups $\SO(8),\SSpin^\pm(8)$ are all isomorphic. That is a consequence of the identification of the three non-trivial elements of the centre of $\Spin(8)$ by the triality automorphism. The relationship at the level of the inertial action of the Weyl group is subtle. Partly to illustrate this point and partly to provide a concrete example of the calculations provided above we will now summarise the computations for the Lie groups $\Spin(8),\SO(8),\SSpin^\pm(8)$ and $\PSO(8)$.

\begin{table}[h]
\centering
\renewcommand{\arraystretch}{1.35}
\setlength{\tabcolsep}{6pt}

\begin{tabular}{|c|c|c|c|}
\hline
\textbf{Type} &
\textbf{Cycle type} &
$\mathbf{SO(8)}$ &
$\mathbf{SSpin^{+}(8)}$ \\
\hline

\multirow{4}{*}{IA}
&
$(4)^{+}$
&
$V(1,0)$
&
$2V(0,1)\sqcup 2V(1,0)$
\\
\cline{2-4}

&
$(4)^{-}$
&
$V(1,0)$
&
$V(1,0)$
\\
\cline{2-4}

&
$(2^2)^{+}$
&
$V(2,0)$
&
$2V(1,1)\sqcup V(2,0)$
\\
\cline{2-4}

&
$(2^2)^{-}$
&
$V(2,0)$
&
$V(2,0)$
\\
\hline

\multirow{3}{*}{IIA}
&
$(3,1)$
&
$S^{2}$
&
$\Sigma \RP^1$
\\
\cline{2-4}

&
$(2,1^2)$
&
$V(3,0)$
&
$V(2,1)$
\\
\cline{2-4}

&
$(1^4)$
&
$V(4,0)$
&
$V(3,1)$
\\
\hline

\multirow{4}{*}{IB}
&
$(2, \bar{1}^2)$&$3V(1,0)$&$3V(1,0)$
\\
\cline{2-4}
&\cellcolor[gray]{0.9}$(\bar{3},\bar{1})$&\cellcolor[gray]{0.9}$4V(0,0)$&\cellcolor[gray]{0.9}$4V(0,0)$
\\
\cline{2-4}
&\cellcolor[gray]{0.9}$(\bar{2}^2)$&\cellcolor[gray]{0.9}$3V(0,0)$&\cellcolor[gray]{0.9}$3V(0,0)$
\\
\cline{2-4}
&\cellcolor[gray]{0.9}$(\bar{1}^4)$&\cellcolor[gray]{0.9}$5V(0,0)$&\cellcolor[gray]{0.9}$5V(0,0)$
\\
\hline

\multirow{2}{*}{IIB}
&
$(1, \bar{2},\bar{1})$
&
$4V(1,0)$
&
$V(1,0)$
\\
\cline{2-4}

&
$(1^2, \bar{1}^2)$
&
$3V(2,0)$
&
$V(2,0)$
\\
\hline

\end{tabular}

\bigskip

\caption{\label{table:D4}
Comparison of the $\SO(8)$ and $\SSpin^{+}(8)$ entries. Elliptic cases are in shaded cells. }
\end{table}

Recall that entries of the form $V(r,0)$ are balls of dimension $r$, while those denoted $V(s,1)$ are the product of a ball of dimension $s$ with the cone on $\RP^0$, so they too are homeomorphic to balls. With that remark it is clear that in most cases the $\SO(8)$ entries are homeomorphic to the corresponding entries for $\SSpin^+(8)$. The exceptions are precisely those arising in the type IIB and the positive type IA rows. To explain what is really going on we need to consider the effect of the equivariant identification on cycle types.

Identifying the Cartan subalgebra with $\R^4$ as usual the lattices can be explicitly described as follows:

\bigskip

\begin{table}[h]

\centering
\renewcommand{\arraystretch}{1.35}
\setlength{\tabcolsep}{6pt}

\begin{tabular}{|c|c|}
\hline
$G$ & Lattice basis\\
\hline
$\Spin(8)$ & $(-1,1,0,0),(1,1,0,0),(0,-1,1,0),(0,0,-1,1)$
\\
\hline
$\SO(8)$ &$(1,0,0,0),(0,1,0,0),(0,0,1,0),(0,0,0,1)$
\\
\hline
$\SSpin^+(8)$ &
${\begin{aligned}
  &\textstyle u_1^+=(\frac12,\frac12,\frac12,\frac12),
&\textstyle u_2^+=(\frac12,-\frac12,\frac12,-\frac12),\\
&\textstyle u_3^+=(\frac12,-\frac12,-\frac12,\frac12),
&\textstyle u_4^+=(\frac12,\frac12,-\frac12,-\frac12)\phantom{,}
\end{aligned}}^{\strut}_{\strut}$
\\
\hline
$\SSpin^-(8)$ &
${\begin{aligned}
  &\textstyle u_1^-=(-\frac12,\frac12,\frac12,\frac12),
&\textstyle u_2^-=(-\frac12,-\frac12,\frac12,-\frac12),\\
&\textstyle u_3^-=(-\frac12,-\frac12,-\frac12,\frac12),
&\textstyle u_4^-=(-\frac12,\frac12,-\frac12,-\frac12)
\phantom{,}
\end{aligned}}^{\strut}_{\strut}$
\\
\hline
$\PSO(8)$&$(1,0,0,0),(0,1,0,0),(0,1,0,0),(\frac12,\frac12,\frac12,\frac12)$\\
\hline
\end{tabular}

\bigskip

\caption{The lattices for the  real Lie groups of type $D_4$}
\end{table}

We observe that the lattices for $\SSpin^\pm(8)$ are isometric to the standard integer lattice appearing for $\SO(8)$, with this isometry preserving the set of roots. Indeed the set of roots is exactly the set of vectors of the form $\pm u\pm v$ where 
\[u,v\in \{(1,0,0,0),(0,1,0,0),(0,0,1,0),(0,0,0,1)\}
\]
but this is equal to the set of vectors $\pm u\pm v$ where 
\[u,v\in \left\{\left(\frac12,\frac12,\frac12,\frac12\right),
\left(\frac12,-\frac12,\frac12,-\frac12\right),\left(\frac12,-\frac12,-\frac12,\frac12\right),
\left(\frac12,\frac12,-\frac12,-\frac12\right)\right\}.
\]
Since this gives an isomorphism of the root data it follows that the Lie groups $\SO(8)$ and $\SSpin^+(8)$ are isomorphic.  The same argument shows $\SO(8)$ is also isomorphic to $\SSpin^-(8)$.

Recalling that the cycle structure for an element of the Weyl group depends on the choice of signed basis, we observe that the above isomorphisms will sometimes change the cycle structure.  For instance consider the $4$-cycle $(1\,2\,3\,4)(-1\,-2\,-3\,-4)$ acting in the usual way on the standard basis.  It acts on the $\SSpin^+(8)$ basis as follows:
\[
u_1^+\mapsto u_1^+,\quad u_2^+\mapsto-u_2^+,\quad u_3^+\mapsto u_4^+,\quad u_4^+\mapsto -u_3^+.
\]
Hence under the isomorphism the cycle type changes from $(4)^+$ to $(1,\overline{2},\overline{1})$. Conversely if we begin with the element $(2\,-2),(3\,4\,-3\,-4)$ of cycle type $(1,\overline{2},\overline{1})$ then its action on the $\SSpin^+(8)$ basis is:
\[
u_1^+\mapsto u_3^+,\quad u_2^+\mapsto u_1^+,\quad u_3^+\mapsto u_4^+,\quad u_4^+\mapsto u_2^+.
\]
giving an element of cycle type $(4)^+$. Similarly elements of cycle types $(2,2)^+, (1,1,\overline 1, \overline 1)$ are switched by the identification, and the corresponding sectors in both cases are homeomorphic.

\bigskip

The entries in Table \ref{table:D4} allow us to compute the cohomology $H^*(X\q W, \mathbb C)$ which, via the standard equivariant Chern character argument \cite{BaumConneschern}, computes the $W$-equivariant $K$-theory of the corresponding tori for the groups $\SO(8), \SSpin^+(8)$ after tensoring with $\mathbb C$. This in turn (by \cite{ABPS2}) computes the $K$-theory for the tempered Iwahori algebra $\fA(\G)$.

For $\SO(8)$ the cohomology has rank $29$ in dimension $0$, rank $0$ in dimension $1$ and rank $1$ in dimension $2$, with the latter arising from the sphere $S^2$ in the  first sector in the Type IIA case. In $K$-theory this yields rank $30$ in even dimensions and $0$ in the odd dimensions. Since the Lie groups $\SO(8)$ and $\SSpin^+(8)$ are isomorphic we expect the same answer in the latter case, and this is witnessed by the homeomorphisms between the corresponding sectors discussed above.

We therefore obtain 

\begin{align*}
K_0(\fA(\G))\otimes \mathbb C &\cong K_0^W(C(\bT_G))\otimes \mathbb C\cong\mathbb C^{30}\\
K_1(\fA(\G))\otimes \mathbb C &\cong K_1^W(C(\bT_G))\otimes \mathbb C= 0,
\end{align*}
where $\G$ denotes the $p$-adic group $\SO(8,F)$ or $\HSpin^\pm(8,F)$ and $\bT_G$ is a maximal torus in the real Lie group $G=\SO(8,\R)$ or $\SSpin^\pm(8,\R)$.

\medskip

\bigskip

We now consider the dual cases of $\Spin(8)$ and $\PSO(8)$. By the results of \cite{NPWKdual} the equivariant $K$-theory groups for the maximal tori in these cases are naturally dual to one another and hence must be rationally isomorphic by the Universal Coefficient Theorem. Moreover in \cite{NPWAn} we strengthened this to show that this duality respects the stratification into sectors by conjugacy classes of the Weyl group (after tensoring with $\C$).

Table \ref{Spin(8), PSO(8)} shows that in this case the duality is mediated by \emph{homeomorphisms} at the level of sectors, which are in turn parameterised by the cycle structures, recalling that we need to take a little care with the split conjugacy classes which appear as Type IA cycle structures.

\bigskip

\begin{table}[h]
\centering
\renewcommand{\arraystretch}{1.35}
\setlength{\tabcolsep}{6pt}

\begin{tabular}{|c|c|c|c|}
\hline
\textbf{Type} &
\textbf{Bipartition} &
$\mathbf{Spin(8)}$ &
$\mathbf{PSO(8)}$ \\
\hline

\multirow{4}{*}{IA}
&
$(4)^{+}$
&
2$V(1,0)$
&
$V(0,1)\sqcup V(1,0)$
\\
\cline{2-4}

&
$(4)^{-}$
&
2$V(1,0)$
&
$V(0,1)\sqcup V(1,0)$
\\
\cline{2-4}

&
$(2^2)^{+}$
&
2$V(2,0)$
&
$V(1,1)\sqcup V(2,0)$
\\
\cline{2-4}

&
$(2^2)^{-}$
&
2$V(2,0)$
&
$V(1,1)\sqcup V(2,0)$
\\
\hline

\multirow{3}{*}{IIA}
&
$(3,1)$
&
$S^2$
&
$\Sigma \RP ^1$
\\
\cline{2-4}

&
$(2,1^2)$
&
$V(3,0)$
&
$V(2,1)$
\\
\cline{2-4}

&
$(1^4)$
&
$V(4,0)$
&
$V(3,1)$
\\
\hline

\multirow{3}{*}{IB}&
$(2, \bar{1}^2)$&$4V(1,0)$&$V(0,1)\sqcup 3V(1,0)$
\\
\cline{2-4}
&\cellcolor[gray]{0.9}$(\bar{3},\bar{1})$&\cellcolor[gray]{0.9}$4V(0,0)$&\cellcolor[gray]{0.9}$4V(0,0)$
\\
\cline{2-4}
&\cellcolor[gray]{0.9}$(\bar{2}^2)$&\cellcolor[gray]{0.9}4V(0,0)&\cellcolor[gray]{0.9}$4V(0,0)$
\\
\cline{2-4}
&\cellcolor[gray]{0.9}$(\bar{1}^4)$&\cellcolor[gray]{0.9}5V(0,0)&\cellcolor[gray]{0.9}$5V(0,0)$
\\
\hline

\multirow{2}{*}{IIB}
&
$(1, \bar{2},\bar{1})$
&
$2V(1,0)$
&
$2V(1,0)$
\\
\cline{2-4}

&
$(1^2, \bar{1}^2)$
&
$2V(2,0)$
&
$V(2,0)\sqcup V(1,1)$
\\
\hline

\end{tabular}

\bigskip

\caption{\label{Spin(8), PSO(8)}Comparison of the $\Spin(8)$ and $\PSO(8)$ entries. Elliptic cases are in shaded cells.}
\end{table}

This yields:
\begin{align*}
K_0(\fA(\G))\otimes \mathbb C &\cong K_0^W(C(\bT_G))\otimes \mathbb C\cong \mathbb C^{33}\\
K_1(\fA(\G))\otimes \mathbb C &\cong K_1^W(C(\bT_G))\otimes \mathbb C= 0,
\end{align*}
where $\G$ is either of the $p$-adic groups $\Spin(8,F)$, $\PSO(8,F)$ and Langlands dually $G$ is either $\PSO(8,\R)$ or $\Spin(8,\R)$.

As shown in \cite{NPWKdual} there is an isomorphism between the $C^*$-algebra $C(\bT_G)\rtimes W$ and the reduced group $C^*$-algebra of the extended affine Weyl group of the Langlands dual of $G$.  Hence the above calculation also computes the $K$-theory for the extended affine Weyl groups of both $\Spin(8,\R)$ and of $\PSO(8,\R)$, while the earlier calculations determine the $K$-theory of the extended affine Weyl groups of both $\SO(8,\R)$ and $\SSpin^\pm(8,\R)$.

\subsection{Ranks of the equivariant $K$-theory groups $K_*^W(C(\bT_G))$}\label{K-theory ranks}

In \cite{Sol} Solleveld computed the ranks of the $K$-theory groups $K_*^W(C(\bT_G))$ when $G=\SO(2n)$ by counting suitable bipartitions and partitions. Our computations above naturally reduce to the following which can easily be seen to be equivalent to Solleveld's result.

For given $n$ we start by defining $b_n^\ev$ to be the sum of the terms 
\[
C((n_j)^{d_j};(m_i)^{c_i}):=\prod_i(c_i+1).
\]
over bipartitions $((n_j)^{d_j};(m_i)^{c_i})$ of $n$ where $\sum_i c_i$ is even. In the case that there are no cycles in the second term of the bipartition we view the corresponding empty product as $1$. We now define $q_n$ to be the number of partitions of $n$ with distinct odd parts (allowing the case of no odd parts).

We can now give the formulae for the ranks of $K$-theory for the case of $\SO(2n)$. 
\nopagebreak

\noindent For $n$ even:
\[
\rank(K_0)=b_n^\ev+q_n,\quad \rank(K_1)=0.
\]
For $n$ odd:
\[
\rank(K_0)=b_n^\ev,\quad \rank(K_1)=q_n.
\]

Turning now to the cases of $\Spin(2n)$ and $\PSO(2n)$ (which have the same $K$-theory ranks) we define $\tilde b_n^\ev$ to be the sum of $\lceil\frac 12 C((n_j)^{d_j};(m_i)^{c_i})\rceil$, again over bipartitions with $\sum_ic_i$ even. Let $p_{n/2}$ denote the number of partitions of $n/2$, interpreted as zero when $n$ is odd. For the cases of $\Spin(2n)$ and $\PSO(2n)$ the $K$-theory groups have the following ranks.

\smallskip

\noindent For $n$ even:
\[
\rank(K_0)=\tilde b_n^\ev+2p_{n/2} + q_n,\quad \rank(K_1)=0.
\]
For $n$ odd:
\[
\rank(K_0)=\tilde b_n^\ev+2p_{n/2},\quad \rank(K_1)=q_n.
\]

\smallskip

In particular we obtain positive $K_1$ rank only if $n$ is odd. We note that the $K_1$ ranks are the same for all three forms: $\Spin, \SO$ and $\PSO$. For $\Spin,\PSO$ this is expected by our duality results however we would not necessarily expect $\SO$ to also agree with these and indeed the rank of $K_0$ for $\SO$ is typically different to the rank for $\Spin$ or $\PSO$.

\bigskip
We now turn to the groups of type $B_n$ and $C_n$. For the groups $\SO(2n+1)$ and $\Sp(n)$ the K-theory ranks were computed by Solleveld and our answers below agree with that computation. We define $b_n$ to be the sum of $C((n_j)^{d_j};(m_i)^{c_i})$ over all bipartitions. Then:

\smallskip

\[
\rank(K_0)= b_n, \quad \rank(K_1)=0,
\]

\smallskip

\noindent For the groups $\Spin(2n+1), \PSp(n)$ we define 
$\tilde b_n$ to be the sum of the integers $\lceil \frac 12 C((n_j)^{d_j};(m_i)^{c_i})\rceil$ over all bipartitions, and define  $r_n$ to be the number of bipartitions of $n$ with no positive parts of odd length. From Tables \ref{simply_connected_B_n_sectors} and \ref{adjoint_C_n_sectors} we obtain:
\[
\rank(K_0)= \tilde b_n + r_n, \quad \rank(K_1)=0.
\]
We note that while the $K$-theory ranks for $\Spin(2n+1)$ and $\PSp(n)$ are the same, the orbifold geometry of the sectors in these cases may differ even though the homotopy type is the same. A ball arising in a $\Spin(2n+1)$ sector may be replaced by (a bundle over) the cone on a real projective space in the corresponding $\PSp(n)$ sector. A similar remark applies to $\Spin(2n)$ and $\PSO(2n)$.

\subsection{The homotopy sector conjecture and rational homotopy}\label{sector conjecture}

In general there is no equivariant identification of the action of a Weyl group on the maximal tori of its various Lie groups, however as discussed above, in \cite{NPWKdual} we showed that there is an equivariant duality in $K$-theory between Langlands dual tori. Hence there is an isomorphism at the level of complex cohomology for the corresponding extended quotients, which by \cite{NPWAn} is stratified by the sectors.

The fact that the sectors in the $\Spin(8)$ and $\PSO(8)$ cases considered in Section \ref{D4 Lie groups} are actually equivalent up to homeomorphism, not just cohomological rank, is unusual as shown by our previous calculations for Weyl groups of type $A_n$ and $E_6$ in \cite{NPWAn, NPWE6}. On the other hand, in every case computed to this point the cohomological equivalence was carried by a homotopy equivalence at the level of sectors, leading us to conjecture that this would always be the case. In his thesis, \cite{Majda} the first author showed that this conjecture was also true for the Lie group of type $E_7$. It is trivial for the Lie groups $E_8, F_4$ and $G_2$ since these are self-dual, and the earlier results in the current paper also establish it for Lie groups of type $B_n, C_n$ and $D_4$. 

With that context we now give a counterexample to our conjecture. We will consider the extent to which it fails and observe that it can be rescued by replacing the conjecture that Langlands dual sectors are homotopy equivalent by the observation that this is always rationally true. 

For the Lie groups of type $D_9$ and  other cases discussed below, the conjectured homotopy equivalence fails precisely because of the appearance of sectors homeomorphic to a sphere in the $\Spin(18)$ case and a projective space in the corresponding $\PSO(18)$ case. These are not homotopy equivalent, moreover even their integral homologies are different.

Specifically, consider the sector of cycle type $(5,3,1)$ for the Weyl group $D_9$ acting on the maximal torus for $\Spin(18)$. The multiplicities are all $1$ with $\delta_{\odd} = 3$, so according to our recipe the sector is a sphere $S^3$. On the other hand for the Langlands dual Lie group $\PSO(18)$, the sector is $\RP^3$, and is therefore not homotopy equivalent to $S^3$, although these sectors do have the same rational cohomology.

\medskip

For $n=16$ consider the cycle type $(7,5,3,1)$. The corresponding $\Spin(32)$ and $\SO(32)$ sectors are homeomorphic to the $4$-sphere, or equivalently the suspension of $S^3$, while the corresponding $\PSO(32)$ and $\SSpin^\pm(32)$ sectors are the suspension of $\RP^{3}$. Specifically, the latter quotient is the double of $V(0, 4)$, which is the cone on $\RP^3$, along the copy of $\RP^3$ in its boundary. While these spaces are not homotopy equivalent they are \emph{rationally} homotopy equivalent. Note that the group $\SSpin^+(32)$ arising here is the locus for the $\SO(32)$ heterotic string in physics.

We can make this construction for any conjugacy class of type IIA whose odd positive cycles are of distinct lengths. However, if the number of odd parts is less than three the projective space that appears in the $\PSO(2n)$ sector is homeomorphic to  the corresponding sphere in the $\Spin(2n)$ sector. It follows that we can construct non-homotopic dual sectors in this way when $n$ is odd and at least $9$ or even and at least $16$.

If the cycle type satisfies the above conditions but has even positive parts as well then the $\Spin(2n)$ sector has an additional direct factor homeomorphic to a ball, while the corresponding $\PSO(2n)$ sector  has a fibre isomorphic to a unit ball  of the same dimension. This of course does not change the homotopy type. This allows us to construct examples for  $n=11,13,15$ where the $\Spin(2n)$ sectors are not homotopy equivalent to the corresponding $\PSO(2n)$ sectors, for example using the cycle types $(5,3,2,1), (5,4,3,1)$ or $(6,5,3,1)$. In these cases the corresponding $\Spin(2n)$ sectors are homeomorphic to products $[0,1]\times S^3$ while the $\PSO(2n)$ sectors are unit interval bundles over $\RP^3$.

In summary, the sector homotopy conjecture holds except in the following specific sectors: 
 sectors of type IIA for the dual Lie groups $\Spin(2n)$ and $\PSO(2n)$ corresponding to cycle-types with distinct odd cycle lengths and at least three such odd cycles.  In particular counterexamples exist for these groups if and only if $n\geq 9, \not=10,12,14$. This establishes Theorem \ref{theorem}.

\bigskip

A continuous map between \emph{simply connected} spaces $f:X\to Y$ is a \emph{rational homotopy equivalence} if it induces isomorphisms
\[
f_* :\pi_i(X)\otimes_\Z \Q\to \pi_i(Y)\otimes_\Z \Q
\]
for all $i>1$. More generally we will say that a continuous map $f:X\to Y$ is a rational homotopy equivalence if it induces a bijection $f_*:\pi_0(X)\to \pi_0(Y)$ and for each $x_0$ in $X$:
\begin{itemize}
    \item the groups $\pi_1(X,x_0)$ and $\pi_1(Y,f(x_0))$ are abelian, and
    \item $f_*:\pi_i(X,x_0)\otimes_\Z \Q\to \pi_i(Y,f(x_0))\otimes_\Z \Q$ is an isomorphism for all $i>0$.
\end{itemize}

An example of this is furnished by the quotient map $f: S^\delta\to \RP^\delta$. The homotopy exact sequence from the associated fibration shows immediately that except for $i=1$ the $f_*:\pi_i(S^\delta) \to \pi_i(\RP^\delta)$ is an isomorphism. For $\delta>1$ we have
\[
\pi_1(S^\delta)=0=\pi_1(\RP^\delta)\otimes_\Z \Q
\]
while for $\delta=1$ the function $f$ induces the times $2$ map
\[
\pi_1(S^\delta)\otimes_\Z\Q\cong \Q\to\pi_1(\RP^\delta)\otimes_\Z \Q\cong \Q.
\]
Indeed in this example, we do not need to use $\Q$ as coefficients, it is sufficient to invert $2$, and we say that $f$ is a \emph{dyadic homotopy equivalence}.

We now consider the \emph{suspension} of the double cover of an odd-dimensional real projective space by a sphere. To show that this is also a rational (indeed dyadic) homotopy equivalence we will use the following theorem.

\begin{theorem}[Whitehead-Serre Theorem, {\cite[Theorem 8.6]{felix}}]
    If $\mathbb K$ is a subring of $\Q$ and if $f : X\rightarrow Y$ is a continuous map between simply connected spaces then $f_*:\pi_*(X, x_0) \otimes_\Z \mathbb K\rightarrow \pi_*(Y, f(x_0)) \otimes_\Z \mathbb K$ is an isomorphism if and only if $f_*:H_*(X, \mathbb K)\rightarrow H_*(Y, \mathbb K)$  is an isomorphism. 
\end{theorem}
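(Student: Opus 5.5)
Since this is a standard result quoted from \cite[Theorem 8.6]{felix}, the plan is to reprove it with Serre's mod-$\mathcal C$ theory. The Serre class $\mathcal C_{\mathbb K}$ will be the class of abelian groups $A$ with $A\otimes_\Z\mathbb K=0$. Write $\mathbb K=\Z[S^{-1}]$ for a set of primes $S$. Then $\mathcal C_{\mathbb K}$ is exactly the class of torsion groups all of whose elements have order divisible only by primes in $S$.

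\textbf{Step 1: properties of $\mathcal C_{\mathbb K}$.} First I check that $\mathcal C_{\mathbb K}$ is closed under subgroups, quotients and extensions; this uses that $\mathbb K$ is flat over $\Z$. I then check that it is closed under $\otimes$ and $\mathrm{Tor}$. Finally I check the acyclicity condition: $\widetilde H_*(K(A,1);\Z)\in\mathcal C_{\mathbb K}$ for $A\in\mathcal C_{\mathbb K}$. For finite cyclic groups this follows from the explicit homology of lens spaces. For general $A$ it follows by passing to direct limits, since homology commutes with filtered colimits.

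\textbf{Step 2: reduction to a pair.} Replace $f$ by the inclusion $X\hookrightarrow M_f$ of $X$ into the mapping cylinder; this does not change homotopy or homology. Since $X$ and $Y$ are simply connected, the pair $(M_f,X)$ is $1$-connected, and $\pi_1$ acts trivially. By flatness of $\mathbb K$, tensoring the long exact sequences in homotopy and in homology with $\mathbb K$ keeps them exact. It follows that $f_*\otimes\mathbb K$ is an isomorphism on $\pi_*$ if and only if $\pi_i(M_f,X)\in\mathcal C_{\mathbb K}$ for all $i$. Likewise, $f_*$ is an isomorphism on $H_*(-;\mathbb K)$ if and only if $H_i(M_f,X;\Z)\in\mathcal C_{\mathbb K}$ for all $i$; here I use the universal coefficient theorem together with $H_*(-;\mathbb K)\cong H_*(-;\Z)\otimes\mathbb K$.

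\textbf{Step 3: the relative mod-$\mathcal C$ Hurewicz theorem.} The theorem then follows from the relative mod-$\mathcal C$ Hurewicz theorem. Let $(Y,X)$ be a $1$-connected pair of simply connected spaces. If $\pi_i(Y,X)\in\mathcal C$ for all $i<n$, then $H_i(Y,X)\in\mathcal C$ for all $i<n$, and the Hurewicz map $\pi_n(Y,X)\to H_n(Y,X)$ is a $\mathcal C$-isomorphism; the converse holds as well. Applying this inductively in $n$ gives both directions of the equivalence in Step 2.

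I expect Step 3 to be the main obstacle. I would first prove the absolute version by induction on $n$, using the Serre spectral sequence of the path-loop fibration $\Omega Z\to PZ\to Z$. The closure properties and acyclicity from Step 1 are what make this spectral sequence argument go through. I would then deduce the relative version, either from the homotopy fibre of $X\to Y$ with its Serre spectral sequence, or via the relative Hurewicz comparison for $(Y,X)$ versus $(Y\cup CX,*)$. The latter uses simple connectivity, which ensures trivial $\pi_1$-actions and hence that the Blakers–Massey/excision-type comparison holds modulo $\mathcal C$.
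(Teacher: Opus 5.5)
\textbf{There is a genuine gap, in Step 3.}

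The paper gives no proof of this statement; it quotes \cite[Theorem 8.6]{felix}, so your outline has to stand on its own. Its architecture is the classical Serre-class one, and Steps 1 and 2 are fine.

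You state the relative mod-$\mathcal C$ Hurewicz theorem for merely $1$-connected pairs. The mod-$\mathcal C$ Whitehead theorem, and the relative Hurewicz theorem behind it, is normally stated with the extra hypothesis that $\pi_2(X)\to\pi_2(Y)$ is onto. That hypothesis is exactly what makes the homotopy fibre $F$ of $X\to Y$ simply connected. Without it, $\pi_1(F)\cong\pi_2(M_f,X)\cong H_2(M_f,X)$ lies in $\mathcal C_{\mathbb K}$ under either hypothesis of the theorem, but it need not vanish.

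Consequently your sketch breaks down at three points:
\begin{itemize}
\item Your absolute theorem is proved for simply connected spaces, so it cannot be applied to $F$.
\item Passing to the universal cover $\widetilde F$ requires controlling the action of $\pi_1(F)$ on $H_*(\widetilde F)$ and on $\pi_*(F)$, and nothing in your sketch does this.
\item The route through $(Y\cup CX,*)$ only moves the difficulty into an unproved mod-$\mathcal C$ homotopy excision.
\end{itemize}
Simple connectivity of $X$ and $Y$ makes $\pi_1(X)$ act trivially, but that is not the obstruction. A milder form of the same issue already affects your absolute step, since $\pi_1(\Omega Z)=\pi_2(Z)$; Serre sidesteps it by killing homotopy groups through fibrations over simply connected bases.

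This case is not marginal. For the paper's map $S^\delta\to\Sigma\RP^{\delta-1}$, the induced map on $\pi_2$ is $0\to\Z/2$ for $\delta\geq 4$ and multiplication by $2$ for $\delta=2$. It is never onto, so the $\pi_2$-surjective version would not cover the application.

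\textbf{How to close the gap.} You need an argument that copes with $\pi_1(F)\neq 0$. Within your framework, the cheapest fix is to prove Step 3 only when $\pi_2(X)\to\pi_2(Y)$ is onto, where your sketch works, and then reduce to that case via $2$-connected covers.
\begin{itemize}
\item Under either hypothesis, $\pi_2(f)\otimes\mathbb K$ is an isomorphism; for the homological hypothesis, use $\pi_2=H_2$ for simply connected spaces.
\item Hence, by the closure and acyclicity properties of Step 1, $K(\pi_2X,2)\to K(\pi_2Y,2)$ is a $\mathbb K$-homology isomorphism.
\item Compare the untwisted Serre spectral sequences of $X\langle3\rangle\to X\to K(\pi_2X,2)$ and $Y\langle3\rangle\to Y\to K(\pi_2Y,2)$ using Zeeman's comparison theorem. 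This shows that $H_*(f;\mathbb K)$ is an isomorphism iff $H_*(f\langle3\rangle;\mathbb K)$ is.
\item Given the $\pi_2$ statement, $\pi_*(f)\otimes\mathbb K$ is an isomorphism iff $\pi_*(f\langle3\rangle)\otimes\mathbb K$ is. Since $f\langle3\rangle$ is a map of $2$-connected spaces, the restricted Step 3 applies to it.
\end{itemize}

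Alternatively, you can treat $F$ directly.
\begin{itemize}
\item $G=\pi_1(F)$ is abelian, being a quotient of $\pi_2(Y)$, and it is $S$-torsion. So by the transfer and the Cartan--Leray spectral sequence, $H_*(F;\mathbb K)\cong H_*(\widetilde F;\mathbb K)_G$.
\item Moreover $(g-1)\pi_n(F)\subseteq\partial\pi_{n+1}(Y)$, and $G$ acts trivially on this subgroup, so $(g-1)^2=0$. With $m$ the order of $g$, which is invertible in $\mathbb K$, this gives $m(g-1)=g^m-1=0$. Thus $G$ acts trivially on $\pi_n(F)\otimes\mathbb K$.
\item Applying the absolute theorem to the simply connected space $\widetilde F$ then gives $\pi_*(F)\otimes\mathbb K=0$ iff $\widetilde H_*(F;\mathbb K)=0$. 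The untwisted Serre spectral sequence of $F\to X\to Y$ turns this into the statement about $f$.
\end{itemize}
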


We apply this with $\mathbb K=\Z[1/2]$,  $X= S^\delta=\Sigma S^{\delta-1}$, $Y=\Sigma \RP^{\delta-1}$, where $\delta$ is even. The map $f$ is the suspension of the double cover $S^{\delta-1}\rightarrow \RP^{\delta-1}$. Since $\delta-1$ is odd the double cover induces an isomorphism $H_*(S^{\delta-1},\mathbb K)\cong
H_*(\RP^{\delta-1},\mathbb K)$, indeed both homologies are $\mathbb K$ in dimensions $0$ and $\delta-1$ and are $0$ otherwise. It follows that $f$ induces an isomorphism on  the homologies of the suspensions. Hence by the Whitehead-Serre Theorem the map $f:S^\delta\to\Sigma\RP^{\delta-1}$ is a dyadic homotopy equivalence.  We note  that this is \emph{false} for $\delta$ odd.

\bigskip

By Theorem \ref{theorem} when the sectors of dual groups are not homotopy equivalent the only possibility is that they are of the form $S^\delta$ and $\RP^\delta$ with $\delta$ odd, or of the form $S^\delta$ and $\Sigma\RP^{\delta-1}$ with $\delta$ even. This completes the proof of Corollary \ref{corollary}.

\subsection{Injectivity of sectors}\label{injectivity of sectors}

There is of course a map from an extended quotient $X\q W$ to the ordinary quotient $X/W$. We say that a sector $X^w/Z_W(w)$ is \emph{injective} if the restriction of the above map to $X^w/Z_W(w)\to X/W$ is injective.

Sectors need not in general be injective since there may be points $x\in X^w$ and elements $g\in W$ such that $gx\in X^w$ but there is no element of $Z_W(w)$ mapping $x$ to $gx$. The sector $X^w/Z_W(w)$ is injective if and only if for each $x\in X^w$ the intersection of the $W$-orbit $W\!x$ with $X^w$ is exactly the $Z_W(w)$-orbit of $x$.

If a sector of $\bT_G\q W$ is injective then this has implications for the geometry of its image in the Iwahori-spherical block. Since the cuspidal support map $\widehat{\fA(\G)}\to \bT_G/W$ is continuous the following commutative diagram shows that the restriction to a single injective sector of the map $\bT_G\q W\to\widehat{\fA(\G)}$ has continuous inverse:
\[
\begin{tikzcd}
\bT_G\q W \arrow[rr,
"\mu"]\arrow[dr]\arrow[ddr]& &\widehat{\fA(\G)}\arrow[ddl] \\
&(\bT_G\q W)_2 \arrow[ur] \arrow[d]&\\
  &\bT_G/W
\end{tikzcd}
\]

We therefore deduce the following result.

\begin{theorem}
    Let $\mu:\bT_G\q W\to \widehat{\fA(\G)}$ be an ABPS map induced by a choice of $c-Irr$ system. For $w\in W$, if the sector $\bT_G^w/Z_W(w)$ is injective then $\mu(\bT_G^w/Z_W(w))$ is Hausdorff and the restriction of $\mu^{-1}$ gives a continuous bijection
    \[
    \mu^{-1}:\mu(\bT_G^w/Z_W(w))\to\bT_G^w/Z_W(w).
    \]

    If $\G$ is split and has connected centre then $\mu$ can be constructed to be continuous so we have a homeomorphic copy of the sector in $\widehat{\fA(\G)}$.
\end{theorem}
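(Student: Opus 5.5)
The plan is to prove the two assertions separately, using only the commutative diagram displayed just before the statement and the continuity of the cuspidal support map $\widehat{\fA(\G)}\to \bT_G/W$.

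Write $S=\bT_G^w/Z_W(w)$ and let $\pi:\bT_G\q W\to \bT_G/W$ be the projection to the ordinary quotient. Let $c:\widehat{\fA(\G)}\to\bT_G/W$ be the cuspidal support map. Commutativity of the diagram gives $c\circ\mu=\pi$. Injectivity of the sector means that $\pi|_S$ is injective. Hence $\mu|_S$ is injective, so $\mu^{-1}:\mu(S)\to S$ is a well-defined bijection.

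To show $\mu^{-1}$ is continuous, I would factor it as $(\pi|_S)^{-1}\circ c|_{\mu(S)}$, which is legitimate because $c\circ\mu=\pi$. The first map $c|_{\mu(S)}$ is continuous. The main point is that $(\pi|_S)^{-1}:\pi(S)\to S$ is continuous. Here $S$ is compact, being the image of the compact set $\bT_G^w$, and $\bT_G/W$ is Hausdorff, being the quotient of a compact Hausdorff space by a finite group. So $\pi|_S$ is a continuous injection from a compact space into a Hausdorff space, hence a closed map and therefore a homeomorphism onto its image. Composing, $\mu^{-1}|_{\mu(S)}$ is continuous.

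For the Hausdorff claim, observe that $c|_{\mu(S)}$ is a continuous injection into the Hausdorff space $\bT_G/W$. Distinct points of $\mu(S)$ therefore have distinct images, and the preimages of disjoint separating neighbourhoods separate them. So $\mu(S)$ is Hausdorff, and we get the continuous bijection $\mu^{-1}:\mu(S)\to S$.

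For the second assertion, suppose $\G$ is split with connected centre and the Roche conditions hold. By \cite[Theorem 5.2]{ABPS} we can choose $\mu$ continuous. Then $\mu|_S$ is continuous, and so is its inverse by the first part, so $\mu|_S$ is a homeomorphism onto its image. Alternatively, $\mu|_S$ is a continuous bijection from the compact space $S$ onto the Hausdorff space $\mu(S)$, which gives the same conclusion.

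The only step needing real care is the compactness/Hausdorff argument for $(\pi|_S)^{-1}$. In particular one must check that $S$ is compact in the subspace topology of the extended quotient, which holds because $\bT_G\q W$ is a finite disjoint union of such sectors. The rest is formal diagram chasing.
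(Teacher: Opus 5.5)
Your proof is correct and follows the paper's route. The paper deduces the result from the commutative diagram and the continuity of the cuspidal support map $\widehat{\fA(\G)}\to\bT_G/W$. Your factorisation $\mu^{-1}|_{\mu(S)}=(\pi|_S)^{-1}\circ c|_{\mu(S)}$, together with the compact-to-Hausdorff argument for $\pi|_S$, makes explicit the step the paper leaves implicit. For the second part, adding the Roche conditions is right: they are the hypotheses under which the paper's introduction invokes the continuity result for $\mu$.
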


We remark that in general for an injective sector we have a homeomorphism if and only if $\mu$ can be chosen such that the image $\mu(\bT_G^w/Z_W(w))$ is compact.

We now turn to the question of showing that certain specific sectors are injective.   In order to prepare for the next result, let $F$ denote a finite extension of $\Q_p$ with $p > 2$.   The split $18$-dimensional quadratic form is 
\[
q = x_1x_{18} + x_2x_{17} + \cdots + x_9x_{10}.
\]
Then
\[
\SO(18,F): = \SO(q)
\]
is $F$-split with split rank $9$.   The groups $\Spin(18,F)$ and $\PSO(18,F)$ are also split, and   $\PSO(18,F)$ is the split adjoint form of $D_9$.

\begin{theorem}
If $\G=\PSO(18,F)$  then there is a subspace of $\widehat{\fA(\G)}$, corresponding to the sector of type $(5,3,1)$, which is homeomorphic to $S^3$. If $\G=\Spin(18,F)$ then there is a subspace of $\widehat{\fA(\G)}$, corresponding to the sector of type $(5,3,1)$, which maps continuously bijectively to $\RP^3$.
\end{theorem}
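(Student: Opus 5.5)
Note the Langlands duality: for $\G=\PSO(18,F)$ the dual compact group is $G=\Spin(18)$, and for $\G=\Spin(18,F)$ it is $G=\PSO(18)$. So the proof combines three things already in hand: the computation of the sector of cycle type $(5,3,1)$ in Section \ref{D_n case}, an injectivity check for that sector, and the preceding theorem on injective sectors.

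\emph{Step 1: identify the sectors.} The element $w$ of type $(5,3,1)$ lies in $W(D_9)$ and is of type IIA: there are no negative cycles, and all positive cycles are odd with multiplicity $1$. So $\delta_\odd=3$ and $\delta_\ev=0$. By Table \ref{IIA Spin/SO}, $\bT_{\Spin(18)}^w/Z_W(w)\cong S^3$. By Table \ref{IIA PSO(2n) n odd} with $n=9$ odd and $q_\ev=0$, $\bT_{\PSO(18)}^w/Z_W(w)\cong\RP^3$.

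\emph{Step 2: prove injectivity.} Take $x\in\bT^w$ and $g\in W$ with $gx\in\bT^w$. We must find $z\in Z_W(w)$ with $zx=gx$. I would first argue on the $\SO(18)$ torus $T$ in coordinates.
A point of $T^w$ is constant on each cycle block: a $5$-block with value $a$, a $3$-block with value $b$, and a $1$-block with value $c$. Its $W(B_9)$-orbit is the multiset of coordinates up to inversion, namely $\{a^{\pm}\}^5\{b^{\pm}\}^3\{c^{\pm}\}$. Since the block sizes $5,3,1$ are distinct, a point of $T^w$ in the same orbit has the block values $(a',b',c')$ equal to $(a^{\pm1},b^{\pm1},c^{\pm1})$ for generic values.
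At coincidences (for instance $a=b^{\pm1}$) the block multiplicities no longer recover the values uniquely. I would handle these by the partition count: the multiset of values with multiplicities $5,3,1$ must decompose into blocks of sizes $5,3,1$, and any alternative decomposition of that multiset gives the same unordered triple up to inversion. So $gx$ differs from $x$ by independent inversions of the three blocks, which is the action of $Z_{W(B_9)}(w)$.
The remaining issue is parity in $W(D_9)$. An odd number of inversions is needed only if $gx$ is not reachable using an even number. In that case we use that $g\in W(D_9)$ itself: either some block value is $\pm1$, which makes its inversion free, or $g$ restricted to the blocks forces an even count.
I would then lift to $\coveringtorus$ and descend to $\coveringtorus/\ZDSpin$, checking that the extra central translations are realised by $Z_W(w)$. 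In Section \ref{D_n case} the sector was computed as $Z_W(w)$ acting on the connected fixed set $\coveringtorus^w$ (resp.\ its image), so any $W$-identification of fixed points can be compared by projecting to $T$ and tracking the $\beta$ coordinate. On the $\Spin$ torus, the $\beta$ ambiguity is resolved because inverting an odd block multiplies $\beta$ by an odd power of its value. This is consistent with the sector being connected.

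\emph{Step 3: conclude via the preceding theorem.} For $\G=\PSO(18,F)$ the centre is trivial, hence connected, and $p>2$. By the preceding theorem, $\mu$ is continuous and restricts to a homeomorphism of the injective sector $S^3$ onto its image. For $\G=\Spin(18,F)$ the centre is not connected, so we only obtain the continuous bijection $\mu^{-1}$ from the image onto the sector $\RP^3$.

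The main obstacle is Step 2. The delicate part is the coincident-value and $\pm1$ points, together with the $W(D_9)$ parity and central/$\beta$ bookkeeping on the covering and quotient tori. I would attack it by first proving injectivity for $T$ with $Z_{W(B_9)}(w)$, and then showing that the index-$2$ and covering corrections cancel, using Lemma \ref{lemma0}-style enlargement of the groups.
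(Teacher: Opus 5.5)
Your Steps 1 and 3 are fine and match the paper. Step 2, however, is essentially the whole proof, and it has a real gap where you pass from the $\SO(18)$ torus $T$ up to $\coveringtorus$.

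The injectivity you need is on $\coveringtorus^w$ (for $\G=\PSO(18,F)$) and on its image in $\coveringtorus/\ZDSpin$ (for $\G=\Spin(18,F)$). $T$ is neither of these. Your parity fix-up assumes that inverting a block whose value is $\pm1$ is ``free''. That holds on $T$ but not on $\coveringtorus$: for a block $B$ of odd size with value $-1$, $\sigma_B$ fixes the $\alpha$-coordinates but multiplies $\beta$ by $(-1)^{|B|}=-1$. So an element $z\in Z_W(w)$ found on $T$ only gives $zx=\pm gx$ in $\coveringtorus$.

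Here is a concrete case. Write $B_5=\{1,\dots,5\}$, $B_3=\{6,7,8\}$, $B_1=\{9\}$, and let $\bar x$ denote the image of $x$ in $T$. Take $x\in\coveringtorus^w$ with $\alpha_5=-1$, $\alpha_3=1$, and $g=\sigma_{\{1,6\}}\in W(D_9)$. Then $g\bar x=\bar x$, so your $T$-level argument cannot distinguish $e$ from $\sigma_{B_5}\sigma_{B_3}$, since both fix $\bar x$. Yet $gx$ has $\beta$ replaced by $-\beta$, and only $\sigma_{B_5}\sigma_{B_3}$ realises this. Neither the remark that inverting an odd block multiplies $\beta$ by an odd power of its value, nor Lemma \ref{lemma0}, controls this sign. (Lemma \ref{lemma0} computes quotients, not the intersections $W\!x\cap\coveringtorus^w$.)

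The paper avoids any lifting by proving injectivity directly on $\coveringtorus^w$ and then pushing it down the tower.
\begin{itemize}
\item First replace $x$ by a point of its $Z_W(w)$-orbit whose block values are either equal or not mutually inverse. Then write $g=\sigma_J s$ with $s$ a positive permutation and $|J|$ even.
\item Because the subset sums of $\{5,3,1\}$ are distinct, $gx\in\coveringtorus^w$ forces $sx=x$. This includes $\beta$, since positive permutations do not move $\beta$.
\item The same condition forces $J$ to contain all or none of each block whose value is not $\pm1$.
\item On a block $B$ with value $\pm1$, replace $J$ by whichever of $J\cup B$, $J\setminus B$ has even size. Then $J\triangle J'\subseteq B$ has even cardinality, so the two $\beta$-factors differ by $\prod_{p\in J\triangle J'}\alpha_p=1$.
\item Hence $gx=\sigma_{J'}x$ in $\coveringtorus$, and $\sigma_{J'}$ is an even product of block inversions, so it lies in $Z_{W(D_9)}(w)$.
\end{itemize}

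For the descent, you do not want central translations to be realised by $Z_W(w)$, and in fact they are not. $\ZDSO$ acts antipodally on the sphere, which is exactly why the $\PSO(18)$ sector is $\RP^3$ rather than $S^3$. What you need instead is that in type IIA the twisted sets $\{x: wx=cx\}$, for central $c\neq1$, are empty. This holds because $A^w=\emptyset$ when $w$ has odd positive cycles, and $w$ fixes $\beta$. So $\coveringtorus^w$ is the full preimage of the fixed set in $\coveringtorus/\ZDSpin$. Since central elements commute with $W$, injectivity on $\coveringtorus^w$ then passes to $T$ and to $T/\ZDSO$ at no cost.
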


To establish this we consider $D_9$ and the cycle type $(5,3,1)$. For the sake of definiteness we take $w$ to have cycles permuting the first $5$ and the next $3$ coordinates.

We note that as $w$ is of type IIA (odd positive cycles and no negative cycles) it follows that there are surjections from the fixed set  $\coveringtorus^w$ in the $\Spin$ torus to the fixed set $T^w$ in $\SO$, and from $T^w$ to the fixed set $(T/\ZDSO)^w$ in the $\PSO$ torus. Hence it suffices to prove the injectivity result for the case of the fixed set $\coveringtorus ^w$. This fixed set is
\[
\{(\alpha_5,\alpha_5,\alpha_5,\alpha_5,\alpha_5,\alpha_3,\alpha_3,\alpha_3,\alpha_1,\beta) : \alpha_5^5\alpha_3^3\alpha_1\beta^2=1\}
\]

Let $x\in \coveringtorus^w$. As noted above injectivity amounts to the requirement that $W\!x\cap\coveringtorus^w=Z_W(w)x$. It is  therefore sufficient to consider a single $x$ from each $Z_W(w)$ orbit.

If $\alpha_1,\alpha_3,\alpha_5$ agree up to the inverse map $\alpha\mapsto \alpha^{-1}$ then, since $Z_W(w)$ can act to invert any two of $\alpha_1,\alpha_3,\alpha_5$, we can assume without loss of generality that $\alpha_1,\alpha_3,\alpha_5$ are all equal.

Similarly if two coordinates agree up to the inverse, with the third value different then by inverting the third coordinate and one of the other two we may assume that two of the three values agree exactly.

This gives three cases:
\begin{enumerate}
    \item $\alpha_1=\alpha_3=\alpha_5=\alpha$ for some $\alpha\in \TT$;
    \item $\alpha_1,\alpha_3,\alpha_5$ take the values $\alpha,\alpha,\alpha'$ (up to permutation) for some $\alpha,\alpha'\in \TT$ with $\alpha'\neq \alpha,\alpha^{-1}$;
    \item $\alpha_i\neq \alpha_j,\alpha_j^{-1}$ for any $i\neq j$.
\end{enumerate}

A general element of the Weyl group has the form $\sigma_Js$ where $s$ is a (positive) permutation and $J$ is an even cardinality subset of $\{1,\dots,9\}$. Since distinct coordinate values remain distinct after applying the inversion $\alpha\mapsto \alpha^{-1}$ it follows that if $\sigma_Jsx$ is in the fixed set $\coveringtorus^w$ then $s$ can only permute coordinates within the blocks where $x$ is constant, i.e.\ $s$ must fix the point $x$. This is because the constant blocks must have distinct sizes and therefore cannot be interchanged.

We are left with the action of $\sigma_J$ on $x$. If $\alpha_5$ is $\pm1$ then replacing $J$ by a set $J'$ which is either $J\cup\{1,2,3,4,5\}$ or $J\setminus \{1,2,3,4,5\}$, chosen so that $J'$ again has even size, will not change the action on $x$, i.e.\ $\sigma_J x = \sigma_{J'} x$. 
It is clear that the $\alpha$ coordinates are unchanged and moreover the $\beta$ coordinate is also unchanged since $J'$ differs evenly from $J$. A similar argument applies to $\alpha_3$ hence we may assume that on any part of the partition where the value taken by the coordinates is $\pm1$ the set $J$ contains either all or none of these coordinates.

This leaves us to consider the action of $\sigma_J$ on those coordinates that are not equal to $\pm1$. Again, either all or none of the coordinates of each part must lie in $J$ otherwise $\sigma_Jx$ is not in the fixed set.

Thus in all cases we deduce that the action of $\sigma_Js$ on $x$ agrees with the action by some element of the centraliser.

\bigskip

We conclude by observing that the sector corresponding to cycle type $(7,5,3,1)$ is \emph{not} injective since in the Weyl group we have an involution exchanging the $7+1$ coordinates of the first and last cycle with the $5+3$ coordinates of the other two cycles.  This identifies points of the fixed set which are not identified by the action of the centraliser.

\newpage

\begin{appendix}
\section{Sectors, cohomology and $K$-theory for $\Spin(18) $ and $\PSO(18)$}
\label{appendixA}

As described in Section \ref{sector conjecture}, the most interesting topology across all the sectors for $p$-adic groups appears for those with Weyl group of type $D_n$. For all other cases, the sectors are homotopy equivalent (often homeomorphic) to balls, spheres or tori. The first truly interesting case appears for the $p$-adic groups $\PSO(18, F), \Spin(18,F)$  with Langlands duals $\Spin(18, \mathbb C), \PSO(18, \mathbb C)$ respectively. These provide counterexamples to our original homotopy sector conjecture, and also the first examples of sectors with $\RP^n$ topology for $n\geq 3$. Since these Lie groups also play an important role in mathematical physics, and in order to provide a full calculation of the $K$-theory for these groups, we tabulate the sectors below.

\input{spin18-table}

\end{appendix}

\clearpage

\listoftables

\bigskip

\noindent\textsc{School of Mathematical Sciences, University of Southampton, Highfield, Southampton SO17 1BJ, United Kingdom}

\smallskip
\noindent\url{dm7g15@soton.ac.uk}\\
\url{G.A.Niblo@soton.ac.uk}\\
\url{r.j.plymen@soton.ac.uk}\\
\url{wright@soton.ac.uk}

\vskip 1in

\noindent {\bf Keywords:} semisimple p-adic groups, Langlands dual, tempered spectrum, Iwahori-spherical block, extended quotient, $K$-theory, ABPS conjecture, $\Spin(18), \SO(18), \PSO(18)$

\medskip

\noindent {\bf MSC2020:} 22E50, 19L47, 55P62

\end{document}

%% file: spin18-table.tex
\def\tableheaderspinXVIII{Sectors, cohomology and $K$-theory for $\Spin(18)$ and $\PSO(18)$}

\begin{sidewaystable}[p]
\centering
\scriptsize
\caption{\tableheaderspinXVIII\@. Part 1 of 5.}
\label{tab:bipartitions-9-dn}
\begin{tabular}{C{2.65cm}C{0.85cm}C{3.0cm}C{3.0cm}C{2.25cm}C{2.25cm}C{1.55cm}C{1.55cm}}
\toprule
\makecell{\textbf{Bi-}\\\textbf{partition}} & \makecell{\textbf{Type}} & \makecell{\textbf{Spin}\\\textbf{sector}} & \makecell{\textbf{PSO}\\\textbf{sector}} & \makecell{\textbf{Spin}\\\textbf{cohom.}} & \makecell{\textbf{PSO}\\\textbf{cohom.}} & \makecell{\textbf{Spin}\\\textbf{K-theory}} & \makecell{\textbf{PSO}\\\textbf{K-theory}} \\
\midrule
\((\bar{8}, \bar{1})\) & \(\mathrm{IB}\) & \(4 \times V(0,0)\) & \(4 \times V(0,0)\) & \(\mathbb{Z}^{4}\) & \(\mathbb{Z}^{4}\) & \(\mathbb{C}^{4} \oplus \mathbb{C}^{0}\) & \(\mathbb{C}^{4} \oplus \mathbb{C}^{0}\) \\
\((\bar{7}, \bar{2})\) & \(\mathrm{IB}\) & \(4 \times V(0,0)\) & \(4 \times V(0,0)\) & \(\mathbb{Z}^{4}\) & \(\mathbb{Z}^{4}\) & \(\mathbb{C}^{4} \oplus \mathbb{C}^{0}\) & \(\mathbb{C}^{4} \oplus \mathbb{C}^{0}\) \\
\((\bar{6}, \bar{3})\) & \(\mathrm{IB}\) & \(4 \times V(0,0)\) & \(4 \times V(0,0)\) & \(\mathbb{Z}^{4}\) & \(\mathbb{Z}^{4}\) & \(\mathbb{C}^{4} \oplus \mathbb{C}^{0}\) & \(\mathbb{C}^{4} \oplus \mathbb{C}^{0}\) \\
\((\bar{6}, \bar{1}^{3})\) & \(\mathrm{IB}\) & \(6 \times V(0,0)\) & \(6 \times V(0,0)\) & \(\mathbb{Z}^{6}\) & \(\mathbb{Z}^{6}\) & \(\mathbb{C}^{6} \oplus \mathbb{C}^{0}\) & \(\mathbb{C}^{6} \oplus \mathbb{C}^{0}\) \\
\((\bar{5}, \bar{4})\) & \(\mathrm{IB}\) & \(4 \times V(0,0)\) & \(4 \times V(0,0)\) & \(\mathbb{Z}^{4}\) & \(\mathbb{Z}^{4}\) & \(\mathbb{C}^{4} \oplus \mathbb{C}^{0}\) & \(\mathbb{C}^{4} \oplus \mathbb{C}^{0}\) \\
\((\bar{5}, \bar{2}, \bar{1}^{2})\) & \(\mathrm{IB}\) & \(8 \times V(0,0)\) & \(8 \times V(0,0)\) & \(\mathbb{Z}^{8}\) & \(\mathbb{Z}^{8}\) & \(\mathbb{C}^{8} \oplus \mathbb{C}^{0}\) & \(\mathbb{C}^{8} \oplus \mathbb{C}^{0}\) \\
\((\bar{4}, \bar{3}, \bar{1}^{2})\) & \(\mathrm{IB}\) & \(8 \times V(0,0)\) & \(8 \times V(0,0)\) & \(\mathbb{Z}^{8}\) & \(\mathbb{Z}^{8}\) & \(\mathbb{C}^{8} \oplus \mathbb{C}^{0}\) & \(\mathbb{C}^{8} \oplus \mathbb{C}^{0}\) \\
\((\bar{4}, \bar{2}^{2}, \bar{1})\) & \(\mathrm{IB}\) & \(8 \times V(0,0)\) & \(8 \times V(0,0)\) & \(\mathbb{Z}^{8}\) & \(\mathbb{Z}^{8}\) & \(\mathbb{C}^{8} \oplus \mathbb{C}^{0}\) & \(\mathbb{C}^{8} \oplus \mathbb{C}^{0}\) \\
\((\bar{4}, \bar{1}^{5})\) & \(\mathrm{IB}\) & \(8 \times V(0,0)\) & \(8 \times V(0,0)\) & \(\mathbb{Z}^{8}\) & \(\mathbb{Z}^{8}\) & \(\mathbb{C}^{8} \oplus \mathbb{C}^{0}\) & \(\mathbb{C}^{8} \oplus \mathbb{C}^{0}\) \\
\((\bar{3}^{2}, \bar{2}, \bar{1})\) & \(\mathrm{IB}\) & \(8 \times V(0,0)\) & \(8 \times V(0,0)\) & \(\mathbb{Z}^{8}\) & \(\mathbb{Z}^{8}\) & \(\mathbb{C}^{8} \oplus \mathbb{C}^{0}\) & \(\mathbb{C}^{8} \oplus \mathbb{C}^{0}\) \\
\((\bar{3}, \bar{2}^{3})\) & \(\mathrm{IB}\) & \(6 \times V(0,0)\) & \(6 \times V(0,0)\) & \(\mathbb{Z}^{6}\) & \(\mathbb{Z}^{6}\) & \(\mathbb{C}^{6} \oplus \mathbb{C}^{0}\) & \(\mathbb{C}^{6} \oplus \mathbb{C}^{0}\) \\
\((\bar{3}, \bar{2}, \bar{1}^{4})\) & \(\mathrm{IB}\) & \(12 \times V(0,0)\) & \(12 \times V(0,0)\) & \(\mathbb{Z}^{12}\) & \(\mathbb{Z}^{12}\) & \(\mathbb{C}^{12} \oplus \mathbb{C}^{0}\) & \(\mathbb{C}^{12} \oplus \mathbb{C}^{0}\) \\
\((\bar{2}^{3}, \bar{1}^{3})\) & \(\mathrm{IB}\) & \(10 \times V(0,0)\) & \(10 \times V(0,0)\) & \(\mathbb{Z}^{10}\) & \(\mathbb{Z}^{10}\) & \(\mathbb{C}^{10} \oplus \mathbb{C}^{0}\) & \(\mathbb{C}^{10} \oplus \mathbb{C}^{0}\) \\
\((\bar{2}, \bar{1}^{7})\) & \(\mathrm{IB}\) & \(10 \times V(0,0)\) & \(10 \times V(0,0)\) & \(\mathbb{Z}^{10}\) & \(\mathbb{Z}^{10}\) & \(\mathbb{C}^{10} \oplus \mathbb{C}^{0}\) & \(\mathbb{C}^{10} \oplus \mathbb{C}^{0}\) \\
\((2, \bar{6}, \bar{1})\) & \(\mathrm{IB}\) & \(4 \times V(1,0)\) & \(4 \times V(1,0)\) & \(\mathbb{Z}^{4}\) & \(\mathbb{Z}^{4}\) & \(\mathbb{C}^{4} \oplus \mathbb{C}^{0}\) & \(\mathbb{C}^{4} \oplus \mathbb{C}^{0}\) \\
\((2, \bar{5}, \bar{2})\) & \(\mathrm{IB}\) & \(4 \times V(1,0)\) & \(4 \times V(1,0)\) & \(\mathbb{Z}^{4}\) & \(\mathbb{Z}^{4}\) & \(\mathbb{C}^{4} \oplus \mathbb{C}^{0}\) & \(\mathbb{C}^{4} \oplus \mathbb{C}^{0}\) \\
\((2, \bar{4}, \bar{3})\) & \(\mathrm{IB}\) & \(4 \times V(1,0)\) & \(4 \times V(1,0)\) & \(\mathbb{Z}^{4}\) & \(\mathbb{Z}^{4}\) & \(\mathbb{C}^{4} \oplus \mathbb{C}^{0}\) & \(\mathbb{C}^{4} \oplus \mathbb{C}^{0}\) \\
\((2, \bar{4}, \bar{1}^{3})\) & \(\mathrm{IB}\) & \(6 \times V(1,0)\) & \(6 \times V(1,0)\) & \(\mathbb{Z}^{6}\) & \(\mathbb{Z}^{6}\) & \(\mathbb{C}^{6} \oplus \mathbb{C}^{0}\) & \(\mathbb{C}^{6} \oplus \mathbb{C}^{0}\) \\
\((2, \bar{3}, \bar{2}, \bar{1}^{2})\) & \(\mathrm{IB}\) & \(8 \times V(1,0)\) & \(8 \times V(1,0)\) & \(\mathbb{Z}^{8}\) & \(\mathbb{Z}^{8}\) & \(\mathbb{C}^{8} \oplus \mathbb{C}^{0}\) & \(\mathbb{C}^{8} \oplus \mathbb{C}^{0}\) \\
\((2, \bar{2}^{3}, \bar{1})\) & \(\mathrm{IB}\) & \(6 \times V(1,0)\) & \(6 \times V(1,0)\) & \(\mathbb{Z}^{6}\) & \(\mathbb{Z}^{6}\) & \(\mathbb{C}^{6} \oplus \mathbb{C}^{0}\) & \(\mathbb{C}^{6} \oplus \mathbb{C}^{0}\) \\
\((2, \bar{2}, \bar{1}^{5})\) & \(\mathrm{IB}\) & \(8 \times V(1,0)\) & \(8 \times V(1,0)\) & \(\mathbb{Z}^{8}\) & \(\mathbb{Z}^{8}\) & \(\mathbb{C}^{8} \oplus \mathbb{C}^{0}\) & \(\mathbb{C}^{8} \oplus \mathbb{C}^{0}\) \\
\((4, \bar{4}, \bar{1})\) & \(\mathrm{IB}\) & \(4 \times V(1,0)\) & \(4 \times V(1,0)\) & \(\mathbb{Z}^{4}\) & \(\mathbb{Z}^{4}\) & \(\mathbb{C}^{4} \oplus \mathbb{C}^{0}\) & \(\mathbb{C}^{4} \oplus \mathbb{C}^{0}\) \\
\((4, \bar{3}, \bar{2})\) & \(\mathrm{IB}\) & \(4 \times V(1,0)\) & \(4 \times V(1,0)\) & \(\mathbb{Z}^{4}\) & \(\mathbb{Z}^{4}\) & \(\mathbb{C}^{4} \oplus \mathbb{C}^{0}\) & \(\mathbb{C}^{4} \oplus \mathbb{C}^{0}\) \\
\((4, \bar{2}, \bar{1}^{3})\) & \(\mathrm{IB}\) & \(6 \times V(1,0)\) & \(6 \times V(1,0)\) & \(\mathbb{Z}^{6}\) & \(\mathbb{Z}^{6}\) & \(\mathbb{C}^{6} \oplus \mathbb{C}^{0}\) & \(\mathbb{C}^{6} \oplus \mathbb{C}^{0}\) \\
\((2^{2}, \bar{4}, \bar{1})\) & \(\mathrm{IB}\) & \(4 \times V(2,0)\) & \(4 \times V(2,0)\) & \(\mathbb{Z}^{4}\) & \(\mathbb{Z}^{4}\) & \(\mathbb{C}^{4} \oplus \mathbb{C}^{0}\) & \(\mathbb{C}^{4} \oplus \mathbb{C}^{0}\) \\
\((2^{2}, \bar{3}, \bar{2})\) & \(\mathrm{IB}\) & \(4 \times V(2,0)\) & \(4 \times V(2,0)\) & \(\mathbb{Z}^{4}\) & \(\mathbb{Z}^{4}\) & \(\mathbb{C}^{4} \oplus \mathbb{C}^{0}\) & \(\mathbb{C}^{4} \oplus \mathbb{C}^{0}\) \\
\((2^{2}, \bar{2}, \bar{1}^{3})\) & \(\mathrm{IB}\) & \(6 \times V(2,0)\) & \(6 \times V(2,0)\) & \(\mathbb{Z}^{6}\) & \(\mathbb{Z}^{6}\) & \(\mathbb{C}^{6} \oplus \mathbb{C}^{0}\) & \(\mathbb{C}^{6} \oplus \mathbb{C}^{0}\) \\
\((6, \bar{2}, \bar{1})\) & \(\mathrm{IB}\) & \(4 \times V(1,0)\) & \(4 \times V(1,0)\) & \(\mathbb{Z}^{4}\) & \(\mathbb{Z}^{4}\) & \(\mathbb{C}^{4} \oplus \mathbb{C}^{0}\) & \(\mathbb{C}^{4} \oplus \mathbb{C}^{0}\) \\
\((4, 2, \bar{2}, \bar{1})\) & \(\mathrm{IB}\) & \(4 \times V(2,0)\) & \(4 \times V(2,0)\) & \(\mathbb{Z}^{4}\) & \(\mathbb{Z}^{4}\) & \(\mathbb{C}^{4} \oplus \mathbb{C}^{0}\) & \(\mathbb{C}^{4} \oplus \mathbb{C}^{0}\) \\
\((2^{3}, \bar{2}, \bar{1})\) & \(\mathrm{IB}\) & \(4 \times V(3,0)\) & \(4 \times V(3,0)\) & \(\mathbb{Z}^{4}\) & \(\mathbb{Z}^{4}\) & \(\mathbb{C}^{4} \oplus \mathbb{C}^{0}\) & \(\mathbb{C}^{4} \oplus \mathbb{C}^{0}\) \\
\((9)\) & \(\mathrm{IIA}\) & \(V(0,0) \times S^{1}\) & \(\unitbundle{1}{0}{0}\) & \(\mathbb{Z} \oplus \mathbb{Z}\) & \(\mathbb{Z} \oplus \mathbb{Z}\) & \(\mathbb{C}^{1} \oplus \mathbb{C}^{1}\) & \(\mathbb{C}^{1} \oplus \mathbb{C}^{1}\) \\
\((8, 1)\) & \(\mathrm{IIA}\) & \(V(1,0) \times S^{1}\) & \(\unitbundle{1}{0}{1}\) & \(\mathbb{Z} \oplus \mathbb{Z}\) & \(\mathbb{Z} \oplus \mathbb{Z}\) & \(\mathbb{C}^{1} \oplus \mathbb{C}^{1}\) & \(\mathbb{C}^{1} \oplus \mathbb{C}^{1}\) \\
\bottomrule
\end{tabular}
\end{sidewaystable}

\begin{sidewaystable}[p]
\centering
\scriptsize
\caption{\tableheaderspinXVIII\@. Part 2 of 5.}
\label{tab:bipartitions-9-dn-2}
\begin{tabular}{C{2.65cm}C{0.85cm}C{3.0cm}C{3.0cm}C{2.25cm}C{2.25cm}C{1.55cm}C{1.55cm}}
\toprule
\makecell{\textbf{Bi-}\\\textbf{partition}} & \makecell{\textbf{Type}} & \makecell{\textbf{Spin}\\\textbf{sector}} & \makecell{\textbf{PSO}\\\textbf{sector}} & \makecell{\textbf{Spin}\\\textbf{cohom.}} & \makecell{\textbf{PSO}\\\textbf{cohom.}} & \makecell{\textbf{Spin}\\\textbf{K-theory}} & \makecell{\textbf{PSO}\\\textbf{K-theory}} \\
\midrule
\((7, 2)\) & \(\mathrm{IIA}\) & \(V(1,0) \times S^{1}\) & \(\unitbundle{1}{0}{1}\) & \(\mathbb{Z} \oplus \mathbb{Z}\) & \(\mathbb{Z} \oplus \mathbb{Z}\) & \(\mathbb{C}^{1} \oplus \mathbb{C}^{1}\) & \(\mathbb{C}^{1} \oplus \mathbb{C}^{1}\) \\
\((6, 3)\) & \(\mathrm{IIA}\) & \(V(1,0) \times S^{1}\) & \(\unitbundle{1}{0}{1}\) & \(\mathbb{Z} \oplus \mathbb{Z}\) & \(\mathbb{Z} \oplus \mathbb{Z}\) & \(\mathbb{C}^{1} \oplus \mathbb{C}^{1}\) & \(\mathbb{C}^{1} \oplus \mathbb{C}^{1}\) \\
\((6, 2, 1)\) & \(\mathrm{IIA}\) & \(V(2,0) \times S^{1}\) & \(\unitbundle{1}{0}{2}\) & \(\mathbb{Z} \oplus \mathbb{Z}\) & \(\mathbb{Z} \oplus \mathbb{Z}\) & \(\mathbb{C}^{1} \oplus \mathbb{C}^{1}\) & \(\mathbb{C}^{1} \oplus \mathbb{C}^{1}\) \\
\((5, 4)\) & \(\mathrm{IIA}\) & \(V(1,0) \times S^{1}\) & \(\unitbundle{1}{0}{1}\) & \(\mathbb{Z} \oplus \mathbb{Z}\) & \(\mathbb{Z} \oplus \mathbb{Z}\) & \(\mathbb{C}^{1} \oplus \mathbb{C}^{1}\) & \(\mathbb{C}^{1} \oplus \mathbb{C}^{1}\) \\
\((5, 3, 1)\) & \(\mathrm{IIA}\) & \(V(0,0) \times S^{3}\) & \(\unitbundle{3}{0}{0}\) & \(\mathbb{Z} \oplus 0 \oplus 0 \oplus \mathbb{Z}\) & \(\mathbb{Z} \oplus 0 \oplus \mathbb{Z}/2 \oplus \mathbb{Z}\) & \(\mathbb{C}^{1} \oplus \mathbb{C}^{1}\) & \(\mathbb{C}^{1} \oplus \mathbb{C}^{1}\) \\
\((5, 2^{2})\) & \(\mathrm{IIA}\) & \(V(2,0) \times S^{1}\) & \(\unitbundle{1}{1}{1}\) & \(\mathbb{Z} \oplus \mathbb{Z}\) & \(\mathbb{Z} \oplus \mathbb{Z}\) & \(\mathbb{C}^{1} \oplus \mathbb{C}^{1}\) & \(\mathbb{C}^{1} \oplus \mathbb{C}^{1}\) \\
\((4^{2}, 1)\) & \(\mathrm{IIA}\) & \(V(2,0) \times S^{1}\) & \(\unitbundle{1}{1}{1}\) & \(\mathbb{Z} \oplus \mathbb{Z}\) & \(\mathbb{Z} \oplus \mathbb{Z}\) & \(\mathbb{C}^{1} \oplus \mathbb{C}^{1}\) & \(\mathbb{C}^{1} \oplus \mathbb{C}^{1}\) \\
\((4, 3, 2)\) & \(\mathrm{IIA}\) & \(V(2,0) \times S^{1}\) & \(\unitbundle{1}{0}{2}\) & \(\mathbb{Z} \oplus \mathbb{Z}\) & \(\mathbb{Z} \oplus \mathbb{Z}\) & \(\mathbb{C}^{1} \oplus \mathbb{C}^{1}\) & \(\mathbb{C}^{1} \oplus \mathbb{C}^{1}\) \\
\((4, 2^{2}, 1)\) & \(\mathrm{IIA}\) & \(V(3,0) \times S^{1}\) & \(\unitbundle{1}{1}{2}\) & \(\mathbb{Z} \oplus \mathbb{Z}\) & \(\mathbb{Z} \oplus \mathbb{Z}\) & \(\mathbb{C}^{1} \oplus \mathbb{C}^{1}\) & \(\mathbb{C}^{1} \oplus \mathbb{C}^{1}\) \\
\((3, 2^{3})\) & \(\mathrm{IIA}\) & \(V(3,0) \times S^{1}\) & \(\unitbundle{1}{1}{2}\) & \(\mathbb{Z} \oplus \mathbb{Z}\) & \(\mathbb{Z} \oplus \mathbb{Z}\) & \(\mathbb{C}^{1} \oplus \mathbb{C}^{1}\) & \(\mathbb{C}^{1} \oplus \mathbb{C}^{1}\) \\
\((2^{4}, 1)\) & \(\mathrm{IIA}\) & \(V(4,0) \times S^{1}\) & \(\unitbundle{1}{2}{2}\) & \(\mathbb{Z} \oplus \mathbb{Z}\) & \(\mathbb{Z} \oplus \mathbb{Z}\) & \(\mathbb{C}^{1} \oplus \mathbb{C}^{1}\) & \(\mathbb{C}^{1} \oplus \mathbb{C}^{1}\) \\
\((7, 1^{2})\) & \(\mathrm{IIA}'\) & \(V(3,0)\) & \(V(1,2)\) & \(\mathbb{Z}\) & \(\mathbb{Z}\) & \(\mathbb{C}^{1} \oplus \mathbb{C}^{0}\) & \(\mathbb{C}^{1} \oplus \mathbb{C}^{0}\) \\
\((6, 1^{3})\) & \(\mathrm{IIA}'\) & \(V(4,0)\) & \(V(1,3)\) & \(\mathbb{Z}\) & \(\mathbb{Z}\) & \(\mathbb{C}^{1} \oplus \mathbb{C}^{0}\) & \(\mathbb{C}^{1} \oplus \mathbb{C}^{0}\) \\
\((5, 2, 1^{2})\) & \(\mathrm{IIA}'\) & \(V(4,0)\) & \(V(1,3)\) & \(\mathbb{Z}\) & \(\mathbb{Z}\) & \(\mathbb{C}^{1} \oplus \mathbb{C}^{0}\) & \(\mathbb{C}^{1} \oplus \mathbb{C}^{0}\) \\
\((5, 1^{4})\) & \(\mathrm{IIA}'\) & \(V(5,0)\) & \(V(2,3)\) & \(\mathbb{Z}\) & \(\mathbb{Z}\) & \(\mathbb{C}^{1} \oplus \mathbb{C}^{0}\) & \(\mathbb{C}^{1} \oplus \mathbb{C}^{0}\) \\
\((4, 3, 1^{2})\) & \(\mathrm{IIA}'\) & \(V(4,0)\) & \(V(1,3)\) & \(\mathbb{Z}\) & \(\mathbb{Z}\) & \(\mathbb{C}^{1} \oplus \mathbb{C}^{0}\) & \(\mathbb{C}^{1} \oplus \mathbb{C}^{0}\) \\
\((4, 2, 1^{3})\) & \(\mathrm{IIA}'\) & \(V(5,0)\) & \(V(1,4)\) & \(\mathbb{Z}\) & \(\mathbb{Z}\) & \(\mathbb{C}^{1} \oplus \mathbb{C}^{0}\) & \(\mathbb{C}^{1} \oplus \mathbb{C}^{0}\) \\
\((4, 1^{5})\) & \(\mathrm{IIA}'\) & \(V(6,0)\) & \(V(2,4)\) & \(\mathbb{Z}\) & \(\mathbb{Z}\) & \(\mathbb{C}^{1} \oplus \mathbb{C}^{0}\) & \(\mathbb{C}^{1} \oplus \mathbb{C}^{0}\) \\
\((3^{3})\) & \(\mathrm{IIA}'\) & \(V(3,0)\) & \(V(1,2)\) & \(\mathbb{Z}\) & \(\mathbb{Z}\) & \(\mathbb{C}^{1} \oplus \mathbb{C}^{0}\) & \(\mathbb{C}^{1} \oplus \mathbb{C}^{0}\) \\
\((3^{2}, 2, 1)\) & \(\mathrm{IIA}'\) & \(V(4,0)\) & \(V(1,3)\) & \(\mathbb{Z}\) & \(\mathbb{Z}\) & \(\mathbb{C}^{1} \oplus \mathbb{C}^{0}\) & \(\mathbb{C}^{1} \oplus \mathbb{C}^{0}\) \\
\((3^{2}, 1^{3})\) & \(\mathrm{IIA}'\) & \(V(5,0)\) & \(V(2,3)\) & \(\mathbb{Z}\) & \(\mathbb{Z}\) & \(\mathbb{C}^{1} \oplus \mathbb{C}^{0}\) & \(\mathbb{C}^{1} \oplus \mathbb{C}^{0}\) \\
\((3, 2^{2}, 1^{2})\) & \(\mathrm{IIA}'\) & \(V(5,0)\) & \(V(2,3)\) & \(\mathbb{Z}\) & \(\mathbb{Z}\) & \(\mathbb{C}^{1} \oplus \mathbb{C}^{0}\) & \(\mathbb{C}^{1} \oplus \mathbb{C}^{0}\) \\
\((3, 2, 1^{4})\) & \(\mathrm{IIA}'\) & \(V(6,0)\) & \(V(2,4)\) & \(\mathbb{Z}\) & \(\mathbb{Z}\) & \(\mathbb{C}^{1} \oplus \mathbb{C}^{0}\) & \(\mathbb{C}^{1} \oplus \mathbb{C}^{0}\) \\
\((3, 1^{6})\) & \(\mathrm{IIA}'\) & \(V(7,0)\) & \(V(3,4)\) & \(\mathbb{Z}\) & \(\mathbb{Z}\) & \(\mathbb{C}^{1} \oplus \mathbb{C}^{0}\) & \(\mathbb{C}^{1} \oplus \mathbb{C}^{0}\) \\
\((2^{3}, 1^{3})\) & \(\mathrm{IIA}'\) & \(V(6,0)\) & \(V(2,4)\) & \(\mathbb{Z}\) & \(\mathbb{Z}\) & \(\mathbb{C}^{1} \oplus \mathbb{C}^{0}\) & \(\mathbb{C}^{1} \oplus \mathbb{C}^{0}\) \\
\((2^{2}, 1^{5})\) & \(\mathrm{IIA}'\) & \(V(7,0)\) & \(V(3,4)\) & \(\mathbb{Z}\) & \(\mathbb{Z}\) & \(\mathbb{C}^{1} \oplus \mathbb{C}^{0}\) & \(\mathbb{C}^{1} \oplus \mathbb{C}^{0}\) \\
\((2, 1^{7})\) & \(\mathrm{IIA}'\) & \(V(8,0)\) & \(V(3,5)\) & \(\mathbb{Z}\) & \(\mathbb{Z}\) & \(\mathbb{C}^{1} \oplus \mathbb{C}^{0}\) & \(\mathbb{C}^{1} \oplus \mathbb{C}^{0}\) \\
\((1^{9})\) & \(\mathrm{IIA}'\) & \(V(9,0)\) & \(V(4,5)\) & \(\mathbb{Z}\) & \(\mathbb{Z}\) & \(\mathbb{C}^{1} \oplus \mathbb{C}^{0}\) & \(\mathbb{C}^{1} \oplus \mathbb{C}^{0}\) \\
\((1, \bar{7}, \bar{1})\) & \(\mathrm{IIB}\) & \(2 \times V(1,0)\) & \(2 \times V(1,0)\) & \(\mathbb{Z}^{2}\) & \(\mathbb{Z}^{2}\) & \(\mathbb{C}^{2} \oplus \mathbb{C}^{0}\) & \(\mathbb{C}^{2} \oplus \mathbb{C}^{0}\) \\
\((1, \bar{6}, \bar{2})\) & \(\mathrm{IIB}\) & \(2 \times V(1,0)\) & \(2 \times V(1,0)\) & \(\mathbb{Z}^{2}\) & \(\mathbb{Z}^{2}\) & \(\mathbb{C}^{2} \oplus \mathbb{C}^{0}\) & \(\mathbb{C}^{2} \oplus \mathbb{C}^{0}\) \\
\((1, \bar{5}, \bar{3})\) & \(\mathrm{IIB}\) & \(2 \times V(1,0)\) & \(2 \times V(1,0)\) & \(\mathbb{Z}^{2}\) & \(\mathbb{Z}^{2}\) & \(\mathbb{C}^{2} \oplus \mathbb{C}^{0}\) & \(\mathbb{C}^{2} \oplus \mathbb{C}^{0}\) \\
\((1, \bar{5}, \bar{1}^{3})\) & \(\mathrm{IIB}\) & \(4 \times V(1,0)\) & \(4 \times V(1,0)\) & \(\mathbb{Z}^{4}\) & \(\mathbb{Z}^{4}\) & \(\mathbb{C}^{4} \oplus \mathbb{C}^{0}\) & \(\mathbb{C}^{4} \oplus \mathbb{C}^{0}\) \\
\bottomrule
\end{tabular}
\smallskip

\begin{minipage}{0.8\textwidth}
{\strut Sectors labelled $\mathrm{IIA}'$ are of type IIA with the additional property that there is some odd positive cycle with multiplicity greater than $1$.}
\end{minipage}
\end{sidewaystable}

\begin{sidewaystable}[p]
\centering
\scriptsize
\caption{\tableheaderspinXVIII\@. Part 3 of 5.}
\label{tab:bipartitions-9-dn-3}
\begin{tabular}{C{2.65cm}C{0.85cm}C{3.0cm}C{3.0cm}C{2.25cm}C{2.25cm}C{1.55cm}C{1.55cm}}
\toprule
\makecell{\textbf{Bi-}\\\textbf{partition}} & \makecell{\textbf{Type}} & \makecell{\textbf{Spin}\\\textbf{sector}} & \makecell{\textbf{PSO}\\\textbf{sector}} & \makecell{\textbf{Spin}\\\textbf{cohom.}} & \makecell{\textbf{PSO}\\\textbf{cohom.}} & \makecell{\textbf{Spin}\\\textbf{K-theory}} & \makecell{\textbf{PSO}\\\textbf{K-theory}} \\
\midrule
\((1, \bar{4}^{2})\) & \(\mathrm{IIB}\) & \(2 \times V(1,0)\) & \(V(1,0) \sqcup V(0,1)\) & \(\mathbb{Z}^{2}\) & \(\mathbb{Z}^{2}\) & \(\mathbb{C}^{2} \oplus \mathbb{C}^{0}\) & \(\mathbb{C}^{2} \oplus \mathbb{C}^{0}\) \\
\((1, \bar{4}, \bar{2}, \bar{1}^{2})\) & \(\mathrm{IIB}\) & \(6 \times V(1,0)\) & \(6 \times V(1,0)\) & \(\mathbb{Z}^{6}\) & \(\mathbb{Z}^{6}\) & \(\mathbb{C}^{6} \oplus \mathbb{C}^{0}\) & \(\mathbb{C}^{6} \oplus \mathbb{C}^{0}\) \\
\((1, \bar{3}^{2}, \bar{1}^{2})\) & \(\mathrm{IIB}\) & \(5 \times V(1,0)\) & \(4 \times V(1,0) \sqcup V(0,1)\) & \(\mathbb{Z}^{5}\) & \(\mathbb{Z}^{5}\) & \(\mathbb{C}^{5} \oplus \mathbb{C}^{0}\) & \(\mathbb{C}^{5} \oplus \mathbb{C}^{0}\) \\
\((1, \bar{3}, \bar{2}^{2}, \bar{1})\) & \(\mathrm{IIB}\) & \(6 \times V(1,0)\) & \(6 \times V(1,0)\) & \(\mathbb{Z}^{6}\) & \(\mathbb{Z}^{6}\) & \(\mathbb{C}^{6} \oplus \mathbb{C}^{0}\) & \(\mathbb{C}^{6} \oplus \mathbb{C}^{0}\) \\
\((1, \bar{3}, \bar{1}^{5})\) & \(\mathrm{IIB}\) & \(6 \times V(1,0)\) & \(6 \times V(1,0)\) & \(\mathbb{Z}^{6}\) & \(\mathbb{Z}^{6}\) & \(\mathbb{C}^{6} \oplus \mathbb{C}^{0}\) & \(\mathbb{C}^{6} \oplus \mathbb{C}^{0}\) \\
\((1, \bar{2}^{4})\) & \(\mathrm{IIB}\) & \(3 \times V(1,0)\) & \(2 \times V(1,0) \sqcup V(0,1)\) & \(\mathbb{Z}^{3}\) & \(\mathbb{Z}^{3}\) & \(\mathbb{C}^{3} \oplus \mathbb{C}^{0}\) & \(\mathbb{C}^{3} \oplus \mathbb{C}^{0}\) \\
\((1, \bar{2}^{2}, \bar{1}^{4})\) & \(\mathrm{IIB}\) & \(8 \times V(1,0)\) & \(7 \times V(1,0) \sqcup V(0,1)\) & \(\mathbb{Z}^{8}\) & \(\mathbb{Z}^{8}\) & \(\mathbb{C}^{8} \oplus \mathbb{C}^{0}\) & \(\mathbb{C}^{8} \oplus \mathbb{C}^{0}\) \\
\((1, \bar{1}^{8})\) & \(\mathrm{IIB}\) & \(5 \times V(1,0)\) & \(4 \times V(1,0) \sqcup V(0,1)\) & \(\mathbb{Z}^{5}\) & \(\mathbb{Z}^{5}\) & \(\mathbb{C}^{5} \oplus \mathbb{C}^{0}\) & \(\mathbb{C}^{5} \oplus \mathbb{C}^{0}\) \\
\((1^{2}, \bar{6}, \bar{1})\) & \(\mathrm{IIB}\) & \(2 \times V(2,0)\) & \(2 \times V(2,0)\) & \(\mathbb{Z}^{2}\) & \(\mathbb{Z}^{2}\) & \(\mathbb{C}^{2} \oplus \mathbb{C}^{0}\) & \(\mathbb{C}^{2} \oplus \mathbb{C}^{0}\) \\
\((1^{2}, \bar{5}, \bar{2})\) & \(\mathrm{IIB}\) & \(2 \times V(2,0)\) & \(2 \times V(2,0)\) & \(\mathbb{Z}^{2}\) & \(\mathbb{Z}^{2}\) & \(\mathbb{C}^{2} \oplus \mathbb{C}^{0}\) & \(\mathbb{C}^{2} \oplus \mathbb{C}^{0}\) \\
\((1^{2}, \bar{4}, \bar{3})\) & \(\mathrm{IIB}\) & \(2 \times V(2,0)\) & \(2 \times V(2,0)\) & \(\mathbb{Z}^{2}\) & \(\mathbb{Z}^{2}\) & \(\mathbb{C}^{2} \oplus \mathbb{C}^{0}\) & \(\mathbb{C}^{2} \oplus \mathbb{C}^{0}\) \\
\((1^{2}, \bar{4}, \bar{1}^{3})\) & \(\mathrm{IIB}\) & \(4 \times V(2,0)\) & \(4 \times V(2,0)\) & \(\mathbb{Z}^{4}\) & \(\mathbb{Z}^{4}\) & \(\mathbb{C}^{4} \oplus \mathbb{C}^{0}\) & \(\mathbb{C}^{4} \oplus \mathbb{C}^{0}\) \\
\((1^{2}, \bar{3}, \bar{2}, \bar{1}^{2})\) & \(\mathrm{IIB}\) & \(6 \times V(2,0)\) & \(6 \times V(2,0)\) & \(\mathbb{Z}^{6}\) & \(\mathbb{Z}^{6}\) & \(\mathbb{C}^{6} \oplus \mathbb{C}^{0}\) & \(\mathbb{C}^{6} \oplus \mathbb{C}^{0}\) \\
\((1^{2}, \bar{2}^{3}, \bar{1})\) & \(\mathrm{IIB}\) & \(4 \times V(2,0)\) & \(4 \times V(2,0)\) & \(\mathbb{Z}^{4}\) & \(\mathbb{Z}^{4}\) & \(\mathbb{C}^{4} \oplus \mathbb{C}^{0}\) & \(\mathbb{C}^{4} \oplus \mathbb{C}^{0}\) \\
\((1^{2}, \bar{2}, \bar{1}^{5})\) & \(\mathrm{IIB}\) & \(6 \times V(2,0)\) & \(6 \times V(2,0)\) & \(\mathbb{Z}^{6}\) & \(\mathbb{Z}^{6}\) & \(\mathbb{C}^{6} \oplus \mathbb{C}^{0}\) & \(\mathbb{C}^{6} \oplus \mathbb{C}^{0}\) \\
\((3, \bar{5}, \bar{1})\) & \(\mathrm{IIB}\) & \(2 \times V(1,0)\) & \(2 \times V(1,0)\) & \(\mathbb{Z}^{2}\) & \(\mathbb{Z}^{2}\) & \(\mathbb{C}^{2} \oplus \mathbb{C}^{0}\) & \(\mathbb{C}^{2} \oplus \mathbb{C}^{0}\) \\
\((3, \bar{4}, \bar{2})\) & \(\mathrm{IIB}\) & \(2 \times V(1,0)\) & \(2 \times V(1,0)\) & \(\mathbb{Z}^{2}\) & \(\mathbb{Z}^{2}\) & \(\mathbb{C}^{2} \oplus \mathbb{C}^{0}\) & \(\mathbb{C}^{2} \oplus \mathbb{C}^{0}\) \\
\((3, \bar{3}^{2})\) & \(\mathrm{IIB}\) & \(2 \times V(1,0)\) & \(V(1,0) \sqcup V(0,1)\) & \(\mathbb{Z}^{2}\) & \(\mathbb{Z}^{2}\) & \(\mathbb{C}^{2} \oplus \mathbb{C}^{0}\) & \(\mathbb{C}^{2} \oplus \mathbb{C}^{0}\) \\
\((3, \bar{3}, \bar{1}^{3})\) & \(\mathrm{IIB}\) & \(4 \times V(1,0)\) & \(4 \times V(1,0)\) & \(\mathbb{Z}^{4}\) & \(\mathbb{Z}^{4}\) & \(\mathbb{C}^{4} \oplus \mathbb{C}^{0}\) & \(\mathbb{C}^{4} \oplus \mathbb{C}^{0}\) \\
\((3, \bar{2}^{2}, \bar{1}^{2})\) & \(\mathrm{IIB}\) & \(5 \times V(1,0)\) & \(4 \times V(1,0) \sqcup V(0,1)\) & \(\mathbb{Z}^{5}\) & \(\mathbb{Z}^{5}\) & \(\mathbb{C}^{5} \oplus \mathbb{C}^{0}\) & \(\mathbb{C}^{5} \oplus \mathbb{C}^{0}\) \\
\((3, \bar{1}^{6})\) & \(\mathrm{IIB}\) & \(4 \times V(1,0)\) & \(3 \times V(1,0) \sqcup V(0,1)\) & \(\mathbb{Z}^{4}\) & \(\mathbb{Z}^{4}\) & \(\mathbb{C}^{4} \oplus \mathbb{C}^{0}\) & \(\mathbb{C}^{4} \oplus \mathbb{C}^{0}\) \\
\((2, 1, \bar{5}, \bar{1})\) & \(\mathrm{IIB}\) & \(2 \times V(2,0)\) & \(2 \times V(2,0)\) & \(\mathbb{Z}^{2}\) & \(\mathbb{Z}^{2}\) & \(\mathbb{C}^{2} \oplus \mathbb{C}^{0}\) & \(\mathbb{C}^{2} \oplus \mathbb{C}^{0}\) \\
\((2, 1, \bar{4}, \bar{2})\) & \(\mathrm{IIB}\) & \(2 \times V(2,0)\) & \(2 \times V(2,0)\) & \(\mathbb{Z}^{2}\) & \(\mathbb{Z}^{2}\) & \(\mathbb{C}^{2} \oplus \mathbb{C}^{0}\) & \(\mathbb{C}^{2} \oplus \mathbb{C}^{0}\) \\
\((2, 1, \bar{3}^{2})\) & \(\mathrm{IIB}\) & \(2 \times V(2,0)\) & \(V(2,0) \sqcup V(0,2)\) & \(\mathbb{Z}^{2}\) & \(\mathbb{Z}^{2}\) & \(\mathbb{C}^{2} \oplus \mathbb{C}^{0}\) & \(\mathbb{C}^{2} \oplus \mathbb{C}^{0}\) \\
\((2, 1, \bar{3}, \bar{1}^{3})\) & \(\mathrm{IIB}\) & \(4 \times V(2,0)\) & \(4 \times V(2,0)\) & \(\mathbb{Z}^{4}\) & \(\mathbb{Z}^{4}\) & \(\mathbb{C}^{4} \oplus \mathbb{C}^{0}\) & \(\mathbb{C}^{4} \oplus \mathbb{C}^{0}\) \\
\((2, 1, \bar{2}^{2}, \bar{1}^{2})\) & \(\mathrm{IIB}\) & \(5 \times V(2,0)\) & \(4 \times V(2,0) \sqcup V(0,2)\) & \(\mathbb{Z}^{5}\) & \(\mathbb{Z}^{5}\) & \(\mathbb{C}^{5} \oplus \mathbb{C}^{0}\) & \(\mathbb{C}^{5} \oplus \mathbb{C}^{0}\) \\
\((2, 1, \bar{1}^{6})\) & \(\mathrm{IIB}\) & \(4 \times V(2,0)\) & \(3 \times V(2,0) \sqcup V(0,2)\) & \(\mathbb{Z}^{4}\) & \(\mathbb{Z}^{4}\) & \(\mathbb{C}^{4} \oplus \mathbb{C}^{0}\) & \(\mathbb{C}^{4} \oplus \mathbb{C}^{0}\) \\
\((1^{3}, \bar{5}, \bar{1})\) & \(\mathrm{IIB}\) & \(2 \times V(3,0)\) & \(2 \times V(3,0)\) & \(\mathbb{Z}^{2}\) & \(\mathbb{Z}^{2}\) & \(\mathbb{C}^{2} \oplus \mathbb{C}^{0}\) & \(\mathbb{C}^{2} \oplus \mathbb{C}^{0}\) \\
\((1^{3}, \bar{4}, \bar{2})\) & \(\mathrm{IIB}\) & \(2 \times V(3,0)\) & \(2 \times V(3,0)\) & \(\mathbb{Z}^{2}\) & \(\mathbb{Z}^{2}\) & \(\mathbb{C}^{2} \oplus \mathbb{C}^{0}\) & \(\mathbb{C}^{2} \oplus \mathbb{C}^{0}\) \\
\((1^{3}, \bar{3}^{2})\) & \(\mathrm{IIB}\) & \(2 \times V(3,0)\) & \(V(3,0) \sqcup V(1,2)\) & \(\mathbb{Z}^{2}\) & \(\mathbb{Z}^{2}\) & \(\mathbb{C}^{2} \oplus \mathbb{C}^{0}\) & \(\mathbb{C}^{2} \oplus \mathbb{C}^{0}\) \\
\((1^{3}, \bar{3}, \bar{1}^{3})\) & \(\mathrm{IIB}\) & \(4 \times V(3,0)\) & \(4 \times V(3,0)\) & \(\mathbb{Z}^{4}\) & \(\mathbb{Z}^{4}\) & \(\mathbb{C}^{4} \oplus \mathbb{C}^{0}\) & \(\mathbb{C}^{4} \oplus \mathbb{C}^{0}\) \\
\((1^{3}, \bar{2}^{2}, \bar{1}^{2})\) & \(\mathrm{IIB}\) & \(5 \times V(3,0)\) & \(4 \times V(3,0) \sqcup V(1,2)\) & \(\mathbb{Z}^{5}\) & \(\mathbb{Z}^{5}\) & \(\mathbb{C}^{5} \oplus \mathbb{C}^{0}\) & \(\mathbb{C}^{5} \oplus \mathbb{C}^{0}\) \\
\bottomrule
\end{tabular}
\end{sidewaystable}

\begin{sidewaystable}[p]
\centering
\scriptsize
\caption{\tableheaderspinXVIII\@. Part 4 of 5.}
\label{tab:bipartitions-9-dn-4}
\begin{tabular}{C{2.65cm}C{0.85cm}C{3.0cm}C{3.0cm}C{2.25cm}C{2.25cm}C{1.55cm}C{1.55cm}}
\toprule
\makecell{\textbf{Bi-}\\\textbf{partition}} & \makecell{\textbf{Type}} & \makecell{\textbf{Spin}\\\textbf{sector}} & \makecell{\textbf{PSO}\\\textbf{sector}} & \makecell{\textbf{Spin}\\\textbf{cohom.}} & \makecell{\textbf{PSO}\\\textbf{cohom.}} & \makecell{\textbf{Spin}\\\textbf{K-theory}} & \makecell{\textbf{PSO}\\\textbf{K-theory}} \\
\midrule
\((1^{3}, \bar{1}^{6})\) & \(\mathrm{IIB}\) & \(4 \times V(3,0)\) & \(3 \times V(3,0) \sqcup V(1,2)\) & \(\mathbb{Z}^{4}\) & \(\mathbb{Z}^{4}\) & \(\mathbb{C}^{4} \oplus \mathbb{C}^{0}\) & \(\mathbb{C}^{4} \oplus \mathbb{C}^{0}\) \\
\((3, 1, \bar{4}, \bar{1})\) & \(\mathrm{IIB}\) & \(2 \times V(2,0)\) & \(2 \times V(2,0)\) & \(\mathbb{Z}^{2}\) & \(\mathbb{Z}^{2}\) & \(\mathbb{C}^{2} \oplus \mathbb{C}^{0}\) & \(\mathbb{C}^{2} \oplus \mathbb{C}^{0}\) \\
\((3, 1, \bar{3}, \bar{2})\) & \(\mathrm{IIB}\) & \(2 \times V(2,0)\) & \(2 \times V(2,0)\) & \(\mathbb{Z}^{2}\) & \(\mathbb{Z}^{2}\) & \(\mathbb{C}^{2} \oplus \mathbb{C}^{0}\) & \(\mathbb{C}^{2} \oplus \mathbb{C}^{0}\) \\
\((3, 1, \bar{2}, \bar{1}^{3})\) & \(\mathrm{IIB}\) & \(4 \times V(2,0)\) & \(4 \times V(2,0)\) & \(\mathbb{Z}^{4}\) & \(\mathbb{Z}^{4}\) & \(\mathbb{C}^{4} \oplus \mathbb{C}^{0}\) & \(\mathbb{C}^{4} \oplus \mathbb{C}^{0}\) \\
\((2, 1^{2}, \bar{4}, \bar{1})\) & \(\mathrm{IIB}\) & \(2 \times V(3,0)\) & \(2 \times V(3,0)\) & \(\mathbb{Z}^{2}\) & \(\mathbb{Z}^{2}\) & \(\mathbb{C}^{2} \oplus \mathbb{C}^{0}\) & \(\mathbb{C}^{2} \oplus \mathbb{C}^{0}\) \\
\((2, 1^{2}, \bar{3}, \bar{2})\) & \(\mathrm{IIB}\) & \(2 \times V(3,0)\) & \(2 \times V(3,0)\) & \(\mathbb{Z}^{2}\) & \(\mathbb{Z}^{2}\) & \(\mathbb{C}^{2} \oplus \mathbb{C}^{0}\) & \(\mathbb{C}^{2} \oplus \mathbb{C}^{0}\) \\
\((2, 1^{2}, \bar{2}, \bar{1}^{3})\) & \(\mathrm{IIB}\) & \(4 \times V(3,0)\) & \(4 \times V(3,0)\) & \(\mathbb{Z}^{4}\) & \(\mathbb{Z}^{4}\) & \(\mathbb{C}^{4} \oplus \mathbb{C}^{0}\) & \(\mathbb{C}^{4} \oplus \mathbb{C}^{0}\) \\
\((1^{4}, \bar{4}, \bar{1})\) & \(\mathrm{IIB}\) & \(2 \times V(4,0)\) & \(2 \times V(4,0)\) & \(\mathbb{Z}^{2}\) & \(\mathbb{Z}^{2}\) & \(\mathbb{C}^{2} \oplus \mathbb{C}^{0}\) & \(\mathbb{C}^{2} \oplus \mathbb{C}^{0}\) \\
\((1^{4}, \bar{3}, \bar{2})\) & \(\mathrm{IIB}\) & \(2 \times V(4,0)\) & \(2 \times V(4,0)\) & \(\mathbb{Z}^{2}\) & \(\mathbb{Z}^{2}\) & \(\mathbb{C}^{2} \oplus \mathbb{C}^{0}\) & \(\mathbb{C}^{2} \oplus \mathbb{C}^{0}\) \\
\((1^{4}, \bar{2}, \bar{1}^{3})\) & \(\mathrm{IIB}\) & \(4 \times V(4,0)\) & \(4 \times V(4,0)\) & \(\mathbb{Z}^{4}\) & \(\mathbb{Z}^{4}\) & \(\mathbb{C}^{4} \oplus \mathbb{C}^{0}\) & \(\mathbb{C}^{4} \oplus \mathbb{C}^{0}\) \\
\((5, \bar{3}, \bar{1})\) & \(\mathrm{IIB}\) & \(2 \times V(1,0)\) & \(2 \times V(1,0)\) & \(\mathbb{Z}^{2}\) & \(\mathbb{Z}^{2}\) & \(\mathbb{C}^{2} \oplus \mathbb{C}^{0}\) & \(\mathbb{C}^{2} \oplus \mathbb{C}^{0}\) \\
\((5, \bar{2}^{2})\) & \(\mathrm{IIB}\) & \(2 \times V(1,0)\) & \(V(1,0) \sqcup V(0,1)\) & \(\mathbb{Z}^{2}\) & \(\mathbb{Z}^{2}\) & \(\mathbb{C}^{2} \oplus \mathbb{C}^{0}\) & \(\mathbb{C}^{2} \oplus \mathbb{C}^{0}\) \\
\((5, \bar{1}^{4})\) & \(\mathrm{IIB}\) & \(3 \times V(1,0)\) & \(2 \times V(1,0) \sqcup V(0,1)\) & \(\mathbb{Z}^{3}\) & \(\mathbb{Z}^{3}\) & \(\mathbb{C}^{3} \oplus \mathbb{C}^{0}\) & \(\mathbb{C}^{3} \oplus \mathbb{C}^{0}\) \\
\((4, 1, \bar{3}, \bar{1})\) & \(\mathrm{IIB}\) & \(2 \times V(2,0)\) & \(2 \times V(2,0)\) & \(\mathbb{Z}^{2}\) & \(\mathbb{Z}^{2}\) & \(\mathbb{C}^{2} \oplus \mathbb{C}^{0}\) & \(\mathbb{C}^{2} \oplus \mathbb{C}^{0}\) \\
\((4, 1, \bar{2}^{2})\) & \(\mathrm{IIB}\) & \(2 \times V(2,0)\) & \(V(2,0) \sqcup V(0,2)\) & \(\mathbb{Z}^{2}\) & \(\mathbb{Z}^{2}\) & \(\mathbb{C}^{2} \oplus \mathbb{C}^{0}\) & \(\mathbb{C}^{2} \oplus \mathbb{C}^{0}\) \\
\((4, 1, \bar{1}^{4})\) & \(\mathrm{IIB}\) & \(3 \times V(2,0)\) & \(2 \times V(2,0) \sqcup V(0,2)\) & \(\mathbb{Z}^{3}\) & \(\mathbb{Z}^{3}\) & \(\mathbb{C}^{3} \oplus \mathbb{C}^{0}\) & \(\mathbb{C}^{3} \oplus \mathbb{C}^{0}\) \\
\((3, 2, \bar{3}, \bar{1})\) & \(\mathrm{IIB}\) & \(2 \times V(2,0)\) & \(2 \times V(2,0)\) & \(\mathbb{Z}^{2}\) & \(\mathbb{Z}^{2}\) & \(\mathbb{C}^{2} \oplus \mathbb{C}^{0}\) & \(\mathbb{C}^{2} \oplus \mathbb{C}^{0}\) \\
\((3, 2, \bar{2}^{2})\) & \(\mathrm{IIB}\) & \(2 \times V(2,0)\) & \(V(2,0) \sqcup V(0,2)\) & \(\mathbb{Z}^{2}\) & \(\mathbb{Z}^{2}\) & \(\mathbb{C}^{2} \oplus \mathbb{C}^{0}\) & \(\mathbb{C}^{2} \oplus \mathbb{C}^{0}\) \\
\((3, 2, \bar{1}^{4})\) & \(\mathrm{IIB}\) & \(3 \times V(2,0)\) & \(2 \times V(2,0) \sqcup V(0,2)\) & \(\mathbb{Z}^{3}\) & \(\mathbb{Z}^{3}\) & \(\mathbb{C}^{3} \oplus \mathbb{C}^{0}\) & \(\mathbb{C}^{3} \oplus \mathbb{C}^{0}\) \\
\((3, 1^{2}, \bar{3}, \bar{1})\) & \(\mathrm{IIB}\) & \(2 \times V(3,0)\) & \(2 \times V(3,0)\) & \(\mathbb{Z}^{2}\) & \(\mathbb{Z}^{2}\) & \(\mathbb{C}^{2} \oplus \mathbb{C}^{0}\) & \(\mathbb{C}^{2} \oplus \mathbb{C}^{0}\) \\
\((3, 1^{2}, \bar{2}^{2})\) & \(\mathrm{IIB}\) & \(2 \times V(3,0)\) & \(V(3,0) \sqcup V(1,2)\) & \(\mathbb{Z}^{2}\) & \(\mathbb{Z}^{2}\) & \(\mathbb{C}^{2} \oplus \mathbb{C}^{0}\) & \(\mathbb{C}^{2} \oplus \mathbb{C}^{0}\) \\
\((3, 1^{2}, \bar{1}^{4})\) & \(\mathrm{IIB}\) & \(3 \times V(3,0)\) & \(2 \times V(3,0) \sqcup V(1,2)\) & \(\mathbb{Z}^{3}\) & \(\mathbb{Z}^{3}\) & \(\mathbb{C}^{3} \oplus \mathbb{C}^{0}\) & \(\mathbb{C}^{3} \oplus \mathbb{C}^{0}\) \\
\((2^{2}, 1, \bar{3}, \bar{1})\) & \(\mathrm{IIB}\) & \(2 \times V(3,0)\) & \(2 \times V(3,0)\) & \(\mathbb{Z}^{2}\) & \(\mathbb{Z}^{2}\) & \(\mathbb{C}^{2} \oplus \mathbb{C}^{0}\) & \(\mathbb{C}^{2} \oplus \mathbb{C}^{0}\) \\
\((2^{2}, 1, \bar{2}^{2})\) & \(\mathrm{IIB}\) & \(2 \times V(3,0)\) & \(V(3,0) \sqcup V(1,2)\) & \(\mathbb{Z}^{2}\) & \(\mathbb{Z}^{2}\) & \(\mathbb{C}^{2} \oplus \mathbb{C}^{0}\) & \(\mathbb{C}^{2} \oplus \mathbb{C}^{0}\) \\
\((2^{2}, 1, \bar{1}^{4})\) & \(\mathrm{IIB}\) & \(3 \times V(3,0)\) & \(2 \times V(3,0) \sqcup V(1,2)\) & \(\mathbb{Z}^{3}\) & \(\mathbb{Z}^{3}\) & \(\mathbb{C}^{3} \oplus \mathbb{C}^{0}\) & \(\mathbb{C}^{3} \oplus \mathbb{C}^{0}\) \\
\((2, 1^{3}, \bar{3}, \bar{1})\) & \(\mathrm{IIB}\) & \(2 \times V(4,0)\) & \(2 \times V(4,0)\) & \(\mathbb{Z}^{2}\) & \(\mathbb{Z}^{2}\) & \(\mathbb{C}^{2} \oplus \mathbb{C}^{0}\) & \(\mathbb{C}^{2} \oplus \mathbb{C}^{0}\) \\
\((2, 1^{3}, \bar{2}^{2})\) & \(\mathrm{IIB}\) & \(2 \times V(4,0)\) & \(V(4,0) \sqcup V(1,3)\) & \(\mathbb{Z}^{2}\) & \(\mathbb{Z}^{2}\) & \(\mathbb{C}^{2} \oplus \mathbb{C}^{0}\) & \(\mathbb{C}^{2} \oplus \mathbb{C}^{0}\) \\
\((2, 1^{3}, \bar{1}^{4})\) & \(\mathrm{IIB}\) & \(3 \times V(4,0)\) & \(2 \times V(4,0) \sqcup V(1,3)\) & \(\mathbb{Z}^{3}\) & \(\mathbb{Z}^{3}\) & \(\mathbb{C}^{3} \oplus \mathbb{C}^{0}\) & \(\mathbb{C}^{3} \oplus \mathbb{C}^{0}\) \\
\((1^{5}, \bar{3}, \bar{1})\) & \(\mathrm{IIB}\) & \(2 \times V(5,0)\) & \(2 \times V(5,0)\) & \(\mathbb{Z}^{2}\) & \(\mathbb{Z}^{2}\) & \(\mathbb{C}^{2} \oplus \mathbb{C}^{0}\) & \(\mathbb{C}^{2} \oplus \mathbb{C}^{0}\) \\
\((1^{5}, \bar{2}^{2})\) & \(\mathrm{IIB}\) & \(2 \times V(5,0)\) & \(V(5,0) \sqcup V(2,3)\) & \(\mathbb{Z}^{2}\) & \(\mathbb{Z}^{2}\) & \(\mathbb{C}^{2} \oplus \mathbb{C}^{0}\) & \(\mathbb{C}^{2} \oplus \mathbb{C}^{0}\) \\
\((1^{5}, \bar{1}^{4})\) & \(\mathrm{IIB}\) & \(3 \times V(5,0)\) & \(2 \times V(5,0) \sqcup V(2,3)\) & \(\mathbb{Z}^{3}\) & \(\mathbb{Z}^{3}\) & \(\mathbb{C}^{3} \oplus \mathbb{C}^{0}\) & \(\mathbb{C}^{3} \oplus \mathbb{C}^{0}\) \\
\((5, 1, \bar{2}, \bar{1})\) & \(\mathrm{IIB}\) & \(2 \times V(2,0)\) & \(2 \times V(2,0)\) & \(\mathbb{Z}^{2}\) & \(\mathbb{Z}^{2}\) & \(\mathbb{C}^{2} \oplus \mathbb{C}^{0}\) & \(\mathbb{C}^{2} \oplus \mathbb{C}^{0}\) \\
\bottomrule
\end{tabular}
\end{sidewaystable}

\begin{sidewaystable}[p]
\centering
\scriptsize
\caption{\tableheaderspinXVIII\@. Part 5 of 5.}
\label{tab:bipartitions-9-dn-5}
\begin{tabular}{C{2.65cm}C{0.85cm}C{3.0cm}C{3.0cm}C{2.25cm}C{2.25cm}C{1.55cm}C{1.55cm}}
\toprule
\makecell{\textbf{Bi-}\\\textbf{partition}} & \makecell{\textbf{Type}} & \makecell{\textbf{Spin}\\\textbf{sector}} & \makecell{\textbf{PSO}\\\textbf{sector}} & \makecell{\textbf{Spin}\\\textbf{cohom.}} & \makecell{\textbf{PSO}\\\textbf{cohom.}} & \makecell{\textbf{Spin}\\\textbf{K-theory}} & \makecell{\textbf{PSO}\\\textbf{K-theory}} \\
\midrule
\((4, 1^{2}, \bar{2}, \bar{1})\) & \(\mathrm{IIB}\) & \(2 \times V(3,0)\) & \(2 \times V(3,0)\) & \(\mathbb{Z}^{2}\) & \(\mathbb{Z}^{2}\) & \(\mathbb{C}^{2} \oplus \mathbb{C}^{0}\) & \(\mathbb{C}^{2} \oplus \mathbb{C}^{0}\) \\
\((3^{2}, \bar{2}, \bar{1})\) & \(\mathrm{IIB}\) & \(2 \times V(2,0)\) & \(2 \times V(2,0)\) & \(\mathbb{Z}^{2}\) & \(\mathbb{Z}^{2}\) & \(\mathbb{C}^{2} \oplus \mathbb{C}^{0}\) & \(\mathbb{C}^{2} \oplus \mathbb{C}^{0}\) \\
\((3, 2, 1, \bar{2}, \bar{1})\) & \(\mathrm{IIB}\) & \(2 \times V(3,0)\) & \(2 \times V(3,0)\) & \(\mathbb{Z}^{2}\) & \(\mathbb{Z}^{2}\) & \(\mathbb{C}^{2} \oplus \mathbb{C}^{0}\) & \(\mathbb{C}^{2} \oplus \mathbb{C}^{0}\) \\
\((3, 1^{3}, \bar{2}, \bar{1})\) & \(\mathrm{IIB}\) & \(2 \times V(4,0)\) & \(2 \times V(4,0)\) & \(\mathbb{Z}^{2}\) & \(\mathbb{Z}^{2}\) & \(\mathbb{C}^{2} \oplus \mathbb{C}^{0}\) & \(\mathbb{C}^{2} \oplus \mathbb{C}^{0}\) \\
\((2^{2}, 1^{2}, \bar{2}, \bar{1})\) & \(\mathrm{IIB}\) & \(2 \times V(4,0)\) & \(2 \times V(4,0)\) & \(\mathbb{Z}^{2}\) & \(\mathbb{Z}^{2}\) & \(\mathbb{C}^{2} \oplus \mathbb{C}^{0}\) & \(\mathbb{C}^{2} \oplus \mathbb{C}^{0}\) \\
\((2, 1^{4}, \bar{2}, \bar{1})\) & \(\mathrm{IIB}\) & \(2 \times V(5,0)\) & \(2 \times V(5,0)\) & \(\mathbb{Z}^{2}\) & \(\mathbb{Z}^{2}\) & \(\mathbb{C}^{2} \oplus \mathbb{C}^{0}\) & \(\mathbb{C}^{2} \oplus \mathbb{C}^{0}\) \\
\((1^{6}, \bar{2}, \bar{1})\) & \(\mathrm{IIB}\) & \(2 \times V(6,0)\) & \(2 \times V(6,0)\) & \(\mathbb{Z}^{2}\) & \(\mathbb{Z}^{2}\) & \(\mathbb{C}^{2} \oplus \mathbb{C}^{0}\) & \(\mathbb{C}^{2} \oplus \mathbb{C}^{0}\) \\
\((7, \bar{1}^{2})\) & \(\mathrm{IIB}\) & \(2 \times V(1,0)\) & \(V(1,0) \sqcup V(0,1)\) & \(\mathbb{Z}^{2}\) & \(\mathbb{Z}^{2}\) & \(\mathbb{C}^{2} \oplus \mathbb{C}^{0}\) & \(\mathbb{C}^{2} \oplus \mathbb{C}^{0}\) \\
\((6, 1, \bar{1}^{2})\) & \(\mathrm{IIB}\) & \(2 \times V(2,0)\) & \(V(2,0) \sqcup V(0,2)\) & \(\mathbb{Z}^{2}\) & \(\mathbb{Z}^{2}\) & \(\mathbb{C}^{2} \oplus \mathbb{C}^{0}\) & \(\mathbb{C}^{2} \oplus \mathbb{C}^{0}\) \\
\((5, 2, \bar{1}^{2})\) & \(\mathrm{IIB}\) & \(2 \times V(2,0)\) & \(V(2,0) \sqcup V(0,2)\) & \(\mathbb{Z}^{2}\) & \(\mathbb{Z}^{2}\) & \(\mathbb{C}^{2} \oplus \mathbb{C}^{0}\) & \(\mathbb{C}^{2} \oplus \mathbb{C}^{0}\) \\
\((5, 1^{2}, \bar{1}^{2})\) & \(\mathrm{IIB}\) & \(2 \times V(3,0)\) & \(V(3,0) \sqcup V(1,2)\) & \(\mathbb{Z}^{2}\) & \(\mathbb{Z}^{2}\) & \(\mathbb{C}^{2} \oplus \mathbb{C}^{0}\) & \(\mathbb{C}^{2} \oplus \mathbb{C}^{0}\) \\
\((4, 3, \bar{1}^{2})\) & \(\mathrm{IIB}\) & \(2 \times V(2,0)\) & \(V(2,0) \sqcup V(0,2)\) & \(\mathbb{Z}^{2}\) & \(\mathbb{Z}^{2}\) & \(\mathbb{C}^{2} \oplus \mathbb{C}^{0}\) & \(\mathbb{C}^{2} \oplus \mathbb{C}^{0}\) \\
\((4, 2, 1, \bar{1}^{2})\) & \(\mathrm{IIB}\) & \(2 \times V(3,0)\) & \(V(3,0) \sqcup V(0,3)\) & \(\mathbb{Z}^{2}\) & \(\mathbb{Z}^{2}\) & \(\mathbb{C}^{2} \oplus \mathbb{C}^{0}\) & \(\mathbb{C}^{2} \oplus \mathbb{C}^{0}\) \\
\((4, 1^{3}, \bar{1}^{2})\) & \(\mathrm{IIB}\) & \(2 \times V(4,0)\) & \(V(4,0) \sqcup V(1,3)\) & \(\mathbb{Z}^{2}\) & \(\mathbb{Z}^{2}\) & \(\mathbb{C}^{2} \oplus \mathbb{C}^{0}\) & \(\mathbb{C}^{2} \oplus \mathbb{C}^{0}\) \\
\((3^{2}, 1, \bar{1}^{2})\) & \(\mathrm{IIB}\) & \(2 \times V(3,0)\) & \(V(3,0) \sqcup V(1,2)\) & \(\mathbb{Z}^{2}\) & \(\mathbb{Z}^{2}\) & \(\mathbb{C}^{2} \oplus \mathbb{C}^{0}\) & \(\mathbb{C}^{2} \oplus \mathbb{C}^{0}\) \\
\((3, 2^{2}, \bar{1}^{2})\) & \(\mathrm{IIB}\) & \(2 \times V(3,0)\) & \(V(3,0) \sqcup V(1,2)\) & \(\mathbb{Z}^{2}\) & \(\mathbb{Z}^{2}\) & \(\mathbb{C}^{2} \oplus \mathbb{C}^{0}\) & \(\mathbb{C}^{2} \oplus \mathbb{C}^{0}\) \\
\((3, 2, 1^{2}, \bar{1}^{2})\) & \(\mathrm{IIB}\) & \(2 \times V(4,0)\) & \(V(4,0) \sqcup V(1,3)\) & \(\mathbb{Z}^{2}\) & \(\mathbb{Z}^{2}\) & \(\mathbb{C}^{2} \oplus \mathbb{C}^{0}\) & \(\mathbb{C}^{2} \oplus \mathbb{C}^{0}\) \\
\((3, 1^{4}, \bar{1}^{2})\) & \(\mathrm{IIB}\) & \(2 \times V(5,0)\) & \(V(5,0) \sqcup V(2,3)\) & \(\mathbb{Z}^{2}\) & \(\mathbb{Z}^{2}\) & \(\mathbb{C}^{2} \oplus \mathbb{C}^{0}\) & \(\mathbb{C}^{2} \oplus \mathbb{C}^{0}\) \\
\((2^{3}, 1, \bar{1}^{2})\) & \(\mathrm{IIB}\) & \(2 \times V(4,0)\) & \(V(4,0) \sqcup V(1,3)\) & \(\mathbb{Z}^{2}\) & \(\mathbb{Z}^{2}\) & \(\mathbb{C}^{2} \oplus \mathbb{C}^{0}\) & \(\mathbb{C}^{2} \oplus \mathbb{C}^{0}\) \\
\((2^{2}, 1^{3}, \bar{1}^{2})\) & \(\mathrm{IIB}\) & \(2 \times V(5,0)\) & \(V(5,0) \sqcup V(2,3)\) & \(\mathbb{Z}^{2}\) & \(\mathbb{Z}^{2}\) & \(\mathbb{C}^{2} \oplus \mathbb{C}^{0}\) & \(\mathbb{C}^{2} \oplus \mathbb{C}^{0}\) \\
\((2, 1^{5}, \bar{1}^{2})\) & \(\mathrm{IIB}\) & \(2 \times V(6,0)\) & \(V(6,0) \sqcup V(2,4)\) & \(\mathbb{Z}^{2}\) & \(\mathbb{Z}^{2}\) & \(\mathbb{C}^{2} \oplus \mathbb{C}^{0}\) & \(\mathbb{C}^{2} \oplus \mathbb{C}^{0}\) \\
\((1^{7}, \bar{1}^{2})\) & \(\mathrm{IIB}\) & \(2 \times V(7,0)\) & \(V(7,0) \sqcup V(3,4)\) & \(\mathbb{Z}^{2}\) & \(\mathbb{Z}^{2}\) & \(\mathbb{C}^{2} \oplus \mathbb{C}^{0}\) & \(\mathbb{C}^{2} \oplus \mathbb{C}^{0}\) \\
\midrule
\textrm{Total} &  &  &  &  &  & \(\mathbb{C}^{463} \oplus \mathbb{C}^{13}\) & \(\mathbb{C}^{463} \oplus \mathbb{C}^{13}\) \\
\bottomrule
\end{tabular}

\bigskip
\begin{minipage}{0.8\textwidth}
This table and its \LaTeX{} file were generated by Python code produced with assistance by ChatGPT and checked against values from independent Python code written by the authors.
\end{minipage}

\end{sidewaystable}